\iffalse

\documentclass[11pt]{amsart}

%-------Packages---------
\usepackage{amsmath,amsthm,amsfonts,amssymb,amscd,euscript,enumerate}
\usepackage{tikz}
\usepackage{tikz-cd}
\usepackage[all,arc]{xy}
\usepackage{amsaddr}
\usepackage{enumerate}
\usepackage{mathrsfs} %for mathscr
\usepackage{thmtools}
\usepackage{datetime}
\usepackage{hyperref}
\usepackage{graphicx}
\usepackage{color}
\usepackage{cleveref}
\usepackage{comment}
\usepackage{float}
\input{xy}
\xyoption{all}
\usetikzlibrary{shapes}
\usetikzlibrary{tikzmark}
\usepackage{mathtools}

\fi

%% if you are submitting an initial manuscript then you should have submission as an option here
%% if you are submitting a revised manuscript then you should have revision as an option here
%% otherwise options taken by the article class will be accepted
\documentclass[11pt]{amsart}
%% but DO NOT pass any options (or change anything else anywhere) which alters page size / layout / font size etc

%% note that the class file already loads {amsmath, amsthm, amssymb}

\usepackage{latexsym, mathrsfs, color, tikz, multirow,bbm,mathtools}
\usepackage{graphics}
\usepackage[flushleft]{threeparttable}
\usepackage{subcaption}
\usepackage{xparse}
\input{xy}
\xyoption{all}
\usepackage[left=1in,top=1in,bottom=1in,right=1in]{geometry} 

%\usepackage[backend=bibtex]{biblatex}
%\addbibresource{groupzotero.bib}

\usepackage{tikz}
\usepackage{tikz-cd}

\numberwithin{equation}{section}
\theoremstyle{plain}
\newtheorem{Thm}[equation]{Theorem}
\newtheorem{Prop}[equation]{Proposition}
\newtheorem{Cor}[equation]{Corollary}
\newtheorem{Lem}[equation]{Lemma}

\newtheorem{Quest}[equation]{Question}

\theoremstyle{definition}
\newtheorem{Def}[equation]{Definition}
\newtheorem{Exa}[equation]{Example}
\newtheorem{Rmk}[equation]{Remark}

\newenvironment{red}{\relax\color{red}}{\relax}
\newenvironment{blue}{\relax\color{blue}}{\hspace*{.5ex}\relax}

\newcommand{\ber}{\begin{red}}
\newcommand{\er}{\end{red}}
\newcommand{\beb}{\begin{blue}}
\newcommand{\eb}{\end{blue}}
\def\KL#1{(\textcolor{purple}{KL:#1})}

\newcommand{\bw}{{\boldsymbol{w}}}

\DeclareMathOperator{\sgn}{sgn}

%code for augmented matrices
\makeatletter
\renewcommand*\env@matrix[1][*\c@MaxMatrixCols c]{%
  \hskip -\arraycolsep
  \let\@ifnextchar\new@ifnextchar
  \array{#1}}
\makeatother

%code for resizing matrices

\def\KL#1{(\textcolor{purple}{KL: #1})}

\begin{document}

%% define your title in the usual way
\title{Geometry of $C$-vectors and $C$-Matrices for Mutation-Infinite Quivers}

\author[T. J. Ervin]{Tucker J. Ervin}
\address{Department of Mathematics, University of Alabama,
	Tuscaloosa, AL 35487, U.S.A.}
 \email{tjervin@crimson.ua.edu}
\author[B. Jackson]{Blake Jackson} 
\address{Department of Mathematics, University of Connecticut,
	Monteith Bldg, 341 Mansfield Rd, Storrs, CT 06269, U.S.A.}
 \email{blake.jackson@uconn.edu}
\author[K.Lee]{Kyungyong Lee}
\address{Department of Mathematics, University of Alabama,
	Tuscaloosa, AL 35487, U.S.A. and Korea Institute for Advanced Study, Seoul 02455, Republic of Korea}
 \email{kyungyong.lee@ua.edu; klee1@kias.re.kr}
 \author[S. D. Nguyen]{Son Dang Nguyen}
 \address{Department of Radiology, University of North Carolina  at Chapel Hill}
 \email{sonng@ad.unc.edu}
 
\thanks{The authors were supported by the University of Alabama, Korea Institute for Advanced Study, and the NSF grant DMS 2042786 and DMS 2302620.}

%% define your authors in the usual way
%% use \addressmark{1}, \addressmark{2} etc for the institutions, and use \thanks{} for contact details
%\author[T. J. Ervin, B. Jackson, K. Lee, S. D. Nguyen]{Tucker J. Ervin, Blake Jackson, Kyungyong Lee, \and Son Dang Nguyen}

%% then use \addressmark to match authors to institutions here
%\address{\addressmark{1}Department of Mathematics, University of Alabama,
	%Tuscaloosa, AL 35487, U.S.A. \\
% \addressmark{1}Department of Mathematics, University of Connecticut,
	%Storrs, CT 06269, U.S.A. \\
	%\addressmark{2}Korea Institute for Advanced Study, Seoul 02455, Republic of Korea}

%% leave this blank until submitting a revised version
%\revised{}

%% put your English abstract here, or comment this out if you don't have one yet
%% please don't use custom commands in your abstract / resume, as these will be displayed online
%% likewise for citations -- please don't use \cite, and instead write out your citation as something like (author year)
\begin{abstract}
The set of forks is  a class of quivers introduced by M. Warkentin, where every connected mutation-infinite quiver is mutation equivalent to infinitely many forks. Let $Q$ be a fork with $n$ vertices, and $\bw$ be a fork-preserving mutation sequence.  
We show that every $c$-vector of $Q$ obtained from $\bw$ is a solution to a quadratic equation of the form 
$$\sum_{i=1}^n x_i^2 + \sum_{1\leq i<j\leq n} \pm q_{ij} x_i x_j =1,$$
where $q_{ij}$ is the number of arrows between the vertices $i$ and $j$ in $Q$.
The same proof techniques implies that when $Q$ is a rank 3 mutation-cyclic quiver, every $c$-vector of $Q$ is a solution to a quadratic equation of the same form. 
\end{abstract}

%% put your keywords here, or comment this out if you don't have them yet
\keywords{cluster algebras, $C$-matrices, reflections, Coxeter Groups, non-self-crossing curves}

%% you can include your bibliography however you want, but using an external .bib file is STRONGLY RECOMMENDED and will make the editor's life much easier
%% regardless of how you do it, please use numerical citations; i.e., [xx, yy] in the text

%% this sample uses biblatex, which (among other things) takes care of URLs in a more flexible way than bibtex
%% but you can use bibtex if you want
%\usepackage[backend=bibtex]{biblatex}
%\addbibresource{references.bib}
%% note the \printbibliography command at the end of the file which goes with these biblatex commands

\maketitle
%% note that you DO NOT have to put your abstract here -- it is generated by \maketitle and the \abstract and \resume commands above

\section{Introduction}

 The mutation of a quiver $Q$ was discovered by S. Fomin and A. Zelevinsky in their seminal paper \cite{fomin_cluster_2002} where they introduced cluster algebras. It also appeared in the context of Seiberg duality \cite{feng_toric_2002}. 
 The $c$-vectors (and $C$-matrices) of $Q$ were defined through mutations in further developments of the theory of cluster algebras \cite{fomin_cluster_2007}, and together with their companions, $g$-vectors (and $G$-matrices), played  fundamental roles in the study of cluster algebras (for instance, see \cite{derksen_quivers_2010, gross_canonical_2018, lin_two_2023, nakanishi_tropical_2012, plamondon_cluster_2011}). 
 When $Q$ is acyclic, positive $c$-vectors are actually real Schur roots, that is, the dimension vectors of indecomposable rigid modules over $Q$ \cite{chavez_c-vectors_2015, hubery_categorification_2016, speyer_acyclic_2013}. 
 Moreover, they appear as the denominator vectors of non-initial cluster variables of the cluster algebra associated to $Q$ \cite{caldero_triangulated_2006}. 

Due to the multifaceted appearance of $c$-vectors in important constructions, there have been various results related to the description of $c$-vectors (or real Schur roots) of an acyclic quiver (\cite{baumeister_note_2014,hubery_categorification_2016,igusa_exceptional_2010,schofield_general_1992,seven_cluster_2014,speyer_acyclic_2013}).
In \cite{lee_geometric_2023}, K.-H. Lee and K. Lee conjectured  a correspondence between real Schur roots of an acyclic quiver  and  non-self-crossing curves on a Riemann surface and proposed a new combinatorial/geometric description.
The conjecture is now proven by A. Felikson and P. Tumarkin \cite{felikson_acyclic_2018} for acyclic quivers with multiple edges between every pair of vertices. Recently, S. D. Nguyen \cite{nguyen_proof_2022} proved the conjecture for an arbitrary acyclic (valued) quiver.

For a given (not necessarily acyclic) quiver $Q$, the set of quivers that are mutation equivalent to $Q$ is called the mutation equivalence class of $Q$ and denoted by $\text{Mut}(Q)$. The quiver $Q$ is said to be \emph{mutation-infinite} if $|\text{Mut}(Q)|$ is not finite, and \emph{mutation-finite} if $|\text{Mut}(Q)|<\infty$. The mutation-finite quivers are completely classified, and relatively well studied. On the other hand,  mutation-infinite quivers still await further investigations. %Let $q_{ij}=\text{(the number of arrows from }i\text{ to } j) - \text{(the number of arrows from }j\text{ to } i$) in $Q$.

A reader-friendly version of our main theorem may be stated as follows.
\begin{Thm} \label{main-thm}
Let $n$ be any positive integer. Let $P$ be a mutation-infinite connected quiver with $n$ vertices. Then there exist an infinite number of pairs of a quiver $Q\in \text{Mut}(P)$ and $k\in\{1,...,n\}$ such that every $c$-vector of $Q$ obtained from any mutation sequence not starting with $k$  is a solution to a quadratic equation of the form 
\begin{equation}\label{quad}
\sum_{i=1}^n x_i^2 + \sum_{1\leq i<j\leq n} \pm q_{ij} x_i x_j =1,
\end{equation}
where $q_{ij}$ is the number of arrows between the vertices $i$ and $j$ in $Q$.
\footnote{There does not seem to be a simple way of determining the exact signs of the $x_i x_j$ terms.}
\end{Thm}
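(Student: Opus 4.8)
The plan is to deduce the statement from the two facts already recorded above: Warkentin's theorem that every connected mutation-infinite quiver is mutation equivalent to infinitely many forks, and the fork $c$-vector theorem announced in the abstract (for a fork $Q$ and a fork-preserving mutation sequence $\bw$, every resulting $c$-vector solves \eqref{quad}). First I would produce the infinitely many pairs. Apply Warkentin's theorem to $P$ to obtain infinitely many forks $Q\in\text{Mut}(P)$, and for each such $Q$ take $k$ to be its point of return. Distinct forks give distinct pairs, so this already yields the required infinite family $(Q,k)$, and it remains only to verify the $c$-vector condition for each fixed pair.

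For a fixed fork $Q$ with point of return $k$, the core of the argument is to show that every $c$-vector obtained from a mutation sequence $\bw$ with $w_1\neq k$ is among the $c$-vectors produced by fork-preserving sequences, so that the fork theorem applies verbatim. I would invoke Warkentin's mutation dynamics: mutating a fork at any vertex other than its current point of return again produces a fork, so the first mutation $\mu_{w_1}$ with $w_1\neq k$ keeps us inside the class of forks and determines the new point of return. Iterating, a sequence that avoids the evolving point of return at every step is fork-preserving, and the fork theorem places its $c$-vectors on the quadric \eqref{quad}. Since mutations are involutions, adjacent repetitions in $\bw$ cancel without changing the resulting $c$-vector, so I may pass to a reduced sequence at no cost.

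The conceptual reason the quadric is preserved, which is already packaged inside the fork theorem, is that along fork-preserving sequences the $c$-vector mutation rule acts linearly, by reflections in the symmetric bilinear form $B$ whose associated quadratic form is the left-hand side of \eqref{quad}; thus $B(e_i,e_i)=1$ and $B(e_i,e_j)=\pm q_{ij}/2$. These reflections are isometries of $B$, and the initial $c$-vectors $e_1,\dots,e_n$ satisfy $B(e_i,e_i)=1$, so every $c$-vector that is reached has $B$-norm one, i.e.\ solves \eqref{quad}. I only need this as a black box here, since establishing the linearity and the invariance of $B$ is exactly the content of the fork theorem.

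The main obstacle is the bridging step: controlling the point of return along an \emph{arbitrary} sequence with $w_1\neq k$, including sequences that fail to be fork-preserving because they eventually mutate at the current point of return or backtrack. I expect to handle these by tracking the tropical signs of the $c$-vectors, showing that after the first non-$k$ mutation the relevant signs stabilize so the linear (reflection) description of $c$-vector mutation stays valid, or that any offending $c$-vector already occurs along a genuinely fork-preserving sequence. Verifying that the hypothesis $w_1\neq k$ is precisely what is needed, neither too weak (a single initial mutation at $k$ would break linearity starting from the all-positive seed) nor as strong as the full fork-preserving condition, is the delicate point and is where I would spend most of the effort.
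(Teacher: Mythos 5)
Your proposal follows the same route the paper takes: Warkentin's theorem supplies infinitely many forks $Q\in\text{Mut}(P)$, you take $k$ to be the point of return, and Warkentin's mutation dynamics (mutating a fork anywhere except its point of return yields a fork whose new point of return is the vertex just mutated) identifies reduced sequences not starting with $k$ with fork-preserving sequences, after which Theorem~\ref{thm-preserving-fork} applies as a black box. This is exactly the paper's own (implicit) derivation of Theorem~\ref{main-thm}: the paper never writes a separate proof, but presents Theorem~\ref{thm-preserving-fork} as the ``precise version'' and lets Theorem~\ref{thm-connected-fork1} and the infinitude of forks supply the pairs $(Q,k)$.

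One concrete correction: your claim that you ``may pass to a reduced sequence at no cost'' is false as stated. Cancelling adjacent repetitions can change the first letter of the sequence --- e.g.\ $[j,j,k,\dots]$ reduces to $[k,\dots]$ --- so a sequence not starting with $k$ may well reduce to one that does, and in fact \emph{every} $C$-matrix of $Q$ arises from some non-reduced sequence not starting with $k$. Your last paragraph correctly identifies this as the residual obstacle, but you should recognize that it is an imprecision in the statement of Theorem~\ref{main-thm} itself rather than a defect of your argument: the paper's precise version quantifies only over fork-preserving (in particular reduced) sequences, and the paper offers no argument for sequences whose reduction begins with $k$. So your proof is exactly as complete as the paper's, provided ``mutation sequence not starting with $k$'' is read, as the paper intends, as ``reduced mutation sequence not starting with $k$''; the speculative machinery in your final paragraph (tropical sign tracking, etc.) is not needed under that reading and is not present in the paper.
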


To state a more precise theorem, we need to recall the definition of forks.
An \emph{abundant quiver} is a quiver such that there are two or more arrows between every pair of vertices.

\begin{Def}{\cite[Definition 2.1]{warkentin_exchange_2014}} \label{def-forks1}
A \textit{fork} is an abundant quiver $F$, where $F$ is not acyclic and where there exists a vertex $r$, called the point of return, such that
\begin{itemize}
    \item For all $i \in F^{-}(r)$ and $j \in F^{+}(r)$ we have $f_{ji} > f_{ir}$ and $f_{ji} > f_{rj}$, where $F^{-}(r)$ is the set of vertices with arrows pointing towards $r$ and $F^{+}(r)$ is the set of vertices with arrows coming from $r$.
    
    \item The full subquivers induced by $F^{-}(r)$ and $F^{+}(r)$ are acyclic.
\end{itemize}
An example of a fork is given by 
\[\begin{tikzcd}
    & r \\
    i & & j
    \arrow[from=2-1, to=1-2, "3"]
    \arrow[from=1-2, to=2-3, "4"]
    \arrow[from=2-3, to=2-1, "5"]
\end{tikzcd},\]
where $r$ is the point of return.
\end{Def}

It is known that ``most'' quivers in $\text{Mut}(Q)$ of any connected mutation-infinite quiver $Q$ are forks, as Theorem~\ref{thm-connected-fork1} and Proposition~\ref{exgraph-connected-fork} imply. 

\begin{Thm}{\textup{\cite[Theorem 3.2]{warkentin_exchange_2014}}} \label{thm-connected-fork1}
    A connected quiver is mutation-infinite if and only if it is mutation-equivalent to a fork.
\end{Thm}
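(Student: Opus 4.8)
The plan is to prove the two implications separately, with the reverse direction (mutation-equivalence to a fork implies mutation-infinite) being routine and the forward direction carrying the real content. Since mutation-infiniteness depends only on the mutation class, for the reverse implication it suffices to exhibit a single fork $F$ that is mutation-infinite. The key step is a fork-preservation lemma: if $r$ is the point of return of $F$ and $k\neq r$, then the mutation $\mu_k(F)$ at the vertex $k$ is again a fork. This is a finite verification; one splits into the cases $k\in F^{-}(r)$ and $k\in F^{+}(r)$, recomputes each multiplicity via the mutation rule, and checks that the defining inequalities $f_{ji}>f_{ir}$ and $f_{ji}>f_{rj}$ together with the acyclicity of the induced subquivers on $F^{-}(r)$ and $F^{+}(r)$ all persist, as in Definition~\ref{def-forks1}. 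Because $F$ is \emph{abundant}, such a mutation strictly increases the total number of arrows. Iterating at non-return vertices then produces forks with unboundedly many arrows; as there are only finitely many quivers on $n$ vertices with a bounded arrow count, the class must be infinite, so $F$ and everything mutation-equivalent to it is mutation-infinite.

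For the forward implication I would argue by induction on the number of vertices $n$. The cases $n\le 2$ are vacuous, since every connected quiver on at most two vertices has a mutation class of size at most two, hence is mutation-finite; so assume $n\ge 3$. The base case $n=3$ is concrete: because the class is mutation-infinite, the edge multiplicities of its representatives are unbounded, and a cyclic rank-$3$ representative in which one edge multiplicity strictly dominates the other two is precisely a fork (the point of return being the vertex opposite the dominant edge). Such representatives occur once the multiplicities are large, so the class contains a fork.

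For the inductive step, the first move is to pass to a well-behaved representative. Using again that the class is infinite while quivers with a bounded number of arrows on $n$ vertices are finite in number, I would extract a connected \emph{abundant} quiver $Q'$ in the class, reducing the problem to showing that an abundant connected quiver can be mutated into a fork. The idea is then to single out the edge of maximal multiplicity and use it to manufacture a point of return: one mutates so as to concentrate weight on a single vertex $r$, forcing the arrows joining the neighbours on the two sides of $r$ to dominate the arrows incident to $r$, which is exactly the content of the fork inequalities, while keeping the two induced subquivers acyclic.

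The hard part will be this last step: proving that the concentration process terminates at a genuine fork rather than cycling or merely approaching the inequalities asymptotically. Controlling the simultaneous evolution of all multiplicities $f_{ij}$ under mutation is delicate, since a mutation that repairs one inequality can violate another, and the acyclicity of the subquivers on $F^{-}(r)$ and $F^{+}(r)$ must be maintained throughout. I expect to handle this with a carefully chosen monovariant --- for instance a weighted arrow count that strictly increases under the chosen mutations yet is controlled by the obstruction to the fork inequalities --- combined with the induction hypothesis applied to full subquivers to rule out degenerate configurations. Establishing that such a monovariant exists and forces all fork conditions to hold simultaneously after finitely many steps is where the central difficulty lies.
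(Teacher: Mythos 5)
First, a point of reference: the paper does not prove this theorem at all --- it is imported verbatim from Warkentin \cite[Theorem 3.2]{warkentin_exchange_2014} --- so your attempt can only be measured against Warkentin's argument, which is what the paper relies on. Your reverse direction is essentially his: the fork-preservation lemma (mutating a fork with point of return $r$ at any vertex $k\neq r$ yields a fork with point of return $k$ and strictly more arrows) is exactly his key lemma, and iterating it produces infinitely many distinct quivers in the class. Two corrections there. It does not ``suffice to exhibit a single fork $F$ that is mutation-infinite'': the given quiver is equivalent to some \emph{particular} fork, so you need every fork to be mutation-infinite; your lemma is stated for an arbitrary fork, so this is only a slip of phrasing. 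More substantively, the strict growth of the arrow count is \emph{not} a consequence of abundance alone: if $f_{ik}=f_{kj}=2$ while $f_{ji}$ is large, the pair $(i,j)$ \emph{loses} arrows under $\mu_k$; the growth genuinely uses the fork inequalities $f_{ji}>f_{ir}$ and $f_{ji}>f_{rj}$ to rule such configurations out.

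The forward direction, which is the real content of the theorem, has genuine gaps. (i) Pigeonhole does show an infinite mutation class contains quivers with arbitrarily many arrows, but that does not produce an \emph{abundant} representative: a quiver may carry an enormous multiplicity on one edge while other pairs of vertices are joined by $0$ or $1$ arrows, and no extraction argument converts unbounded total arrow count into the condition ``at least two arrows between every pair.'' Reaching an abundant representative is itself a substantive lemma in Warkentin's work, not a counting observation. (ii) The rank-$3$ base case is asserted rather than proved: ``such representatives occur once the multiplicities are large'' is exactly what must be shown, since a rank-$3$ quiver with huge multiplicities can be acyclic, or cyclic with no dominant edge, and neither is a fork. (iii) Most importantly, the concentration step --- mutating an abundant quiver so that the fork inequalities and the acyclicity of $F^{-}(r)$ and $F^{+}(r)$ hold simultaneously, with a monovariant guaranteeing termination --- is precisely the hard part of Warkentin's theorem, and you explicitly leave its existence open. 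As written, the proposal establishes the easy implication and offers a plan, not a proof, for the hard one.
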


\begin{Prop}{\textup{\cite[Proposition 5.2]{warkentin_exchange_2014}}} \label{exgraph-connected-fork}
Let $G$ be the exchange graph of a connected mutation-infinite quiver. A simple random walk on $G$ will almost surely leave the fork-less part and never come back.
\end{Prop}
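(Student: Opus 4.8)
The plan is to read Proposition~\ref{exgraph-connected-fork} as a transience statement for a biased random walk and to extract the bias from the mutation behaviour of forks. The starting point is the structural fact, established in \cite{warkentin_exchange_2014}, that mutation interacts with the fork property in a strongly asymmetric way: if $F$ is a fork with point of return $r$, then for every vertex $k\neq r$ the mutated quiver $\mu_k(F)$ is again a fork, and moreover its point of return is $k$ itself. In particular the only mutation that can possibly move $F$ toward the fork-less part is $\mu_r$, and this move is reversible, since mutating the new fork $\mu_k(F)$ at its point of return $k$ recovers $F$. First I would verify this on the level of the defining inequalities $f_{ji}>f_{ir}$ and $f_{ji}>f_{rj}$, tracking how the multiplicities $f_{\bullet\bullet}$ transform under $\mu_k$ and checking that the dominant arrow, which detects the point of return, moves so as to avoid $k$.

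With this in hand, I would attach to each fork $F$ in the mutation class a \emph{level} $\ell(F)\in\{0,1,2,\dots\}$, namely the length of a shortest fork-preserving mutation path from $F$ back to the boundary of the fork-less part, and extend $\ell$ by $\ell\equiv 0$ on the fork-less part. The asymmetry above says that, at a fork of level $\ell\ge 1$, exactly one of the $n$ possible mutations, the one at the current point of return, decreases the level, while the remaining $n-1$ produce forks of larger level. Hence the image under $\ell$ of the simple random walk on the exchange graph $G$ stochastically dominates a nearest-neighbour random walk on $\{0,1,2,\dots\}$ that steps up with probability $(n-1)/n$ and down with probability $1/n$. Because a connected mutation-infinite quiver has $n\ge 3$ vertices, we have $(n-1)/n>1/2$, so this comparison walk has strictly positive drift.

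The final step is the probabilistic one. A nearest-neighbour walk on the nonnegative integers with up-probability $p=(n-1)/n>1/2$ is transient: it tends to $+\infty$ almost surely and, by the standard gambler's-ruin computation, the probability of ever returning to level $0$ from level $1$ is $q/p=1/(n-1)<1$. Transferring this back through the domination, each excursion of the walk on $G$ into the fork region fails to return to the fork-less part with probability at least $1-1/(n-1)>0$; by the strong Markov property and Borel--Cantelli, only finitely many excursions return. To close the argument I would couple the two processes step by step, being careful at level $0$: there I would use Theorem~\ref{thm-connected-fork1} together with connectedness of $G$ to guarantee that from any fork-less quiver the walk reaches a fork in finitely many steps almost surely, so that each sojourn in the fork-less part is finite. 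Combining this with the finiteness of the number of returning excursions shows that the walk visits the fork-less part only finitely often, which is exactly the assertion that it almost surely leaves and never comes back.

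The main obstacle is not the random-walk estimate, which is classical, but the structural lemma underlying the bias: one must show that the ``toward the non-forks'' direction is genuinely unique at every fork and that the remaining $n-1$ mutations strictly increase a single globally consistent level function. The delicate points are the behaviour near the boundary, where $\mu_r(F)$ may be fork-less rather than a lower fork, and the bookkeeping of the point of return along long mutation sequences, where one must confirm that the return vertex always records the unique reverse step and that the fork region is tree-like enough for $\ell$ to change by exactly $\pm 1$ at each step. Controlling these is the combinatorial heart of the analysis, and it is what upgrades the naive ``$n-1$ out of $n$ mutations stay forks'' count into an honest positive-drift statement.
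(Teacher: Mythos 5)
First, a point of context: the paper never proves Proposition~\ref{exgraph-connected-fork}; it is imported verbatim from \cite{warkentin_exchange_2014}, so your proposal can only be measured against Warkentin's argument. The half of your plan concerning the fork region is essentially that argument and is sound. By Warkentin's mutation lemma, mutating a fork at any vertex $k\neq r$ yields a fork whose point of return is $k$, so a reduced path whose first step is not at the point of return never meets a non-fork; consequently, for forks at \emph{finite} level the level changes by exactly $\pm 1$, the unique decreasing direction is the point-of-return mutation, and the gambler's-ruin bound $1/(n-1)$ with $n\geq 3$ gives a uniformly positive escape probability per excursion. (You should still dispose of fork components on which $\ell\equiv\infty$ --- these exist, e.g.\ rank-$3$ mutation-cyclic classes consisting entirely of forks --- where your level function is undefined but the claim is vacuous.)

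The genuine gap is at level $0$. You assert that Theorem~\ref{thm-connected-fork1} together with connectedness of $G$ guarantees that each sojourn in the fork-less part is almost surely finite. That inference is invalid: those facts give, for every fork-less quiver, only the \emph{existence} of a finite mutation path to a fork, and on an infinite graph reachability of a set from every vertex does not imply that the walk almost surely hits it (a transient walk avoids the root of an infinite binary tree with positive probability from far away). This matters because the fork-less part can be infinite. For example, take $n=4$ with single arrows $1\to 2$, $2\to 3$ and three arrows $3\to 4$. Since $b_{13}=b_{14}=0$, every sequence of mutations at $3$ and $4$ alone preserves the fact that vertex $1$ is attached by a single arrow of multiplicity one, while the multiplicities among $\{2,3,4\}$ grow without bound; this produces infinitely many pairwise distinct quivers in the mutation class, none of which is abundant, hence none of which is a fork. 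So to close the argument you need a quantitative ingredient that you never supply and that does not follow from what you cite: a bound, uniform over the whole mutation class, on the distance from a fork-less quiver to the nearest fork (so that each bounded block of steps has a uniformly positive chance of entering the fork region), or some substitute for such uniformity. That statement is the real content hiding behind ``the walk leaves the fork-less part,'' and it is exactly the part of the proposition your sketch leaves unproved.
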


%The exchange graph contains as its subgraph a tree  consisting of forks.
%\end{Thm}

A \emph{fork-preserving} mutation sequence is a reduced (see Definition~\ref{def_of_mut_seq})  sequence of mutations that starts with a fork and does not mutate at its point of return. 
A more precise version of our main theorem is as follows.

\begin{Thm} \label{thm-preserving-fork}
Let $Q$ be a fork, and let $\bw$ be a fork-preserving mutation sequence.  Every $c$-vector of $Q$ obtained from $\bw$  is a solution to a quadratic equation of the form \eqref{quad}.
\end{Thm}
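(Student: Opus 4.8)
The plan is to translate the quadratic condition \eqref{quad} into the statement that a single Gram matrix has constant diagonal, and then to propagate this through the mutation sequence by matching the evolution of that Gram matrix against the mutation of the quiver. Concretely, choose signs $\sigma_{ij}\in\{+1,-1\}$ and let $S$ be the symmetric matrix with $S_{ii}=1$ and $S_{ij}=\tfrac12\sigma_{ij}q_{ij}$ for $i\ne j$, so that $A:=2S$ is a quasi-Cartan companion of $Q$ (diagonal $2$, with $|A_{ij}|=q_{ij}$) and the form in \eqref{quad} is exactly $v\mapsto v^{\mathsf T}Sv$. I would take $A=A^{(0)}$ to be an \emph{admissible} companion of $Q$, which one checks exists for a fork because the full subquivers induced by $F^{-}(r)$ and $F^{+}(r)$ are acyclic; this choice seeds the induction. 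Writing $C_t$ for the $C$-matrix after the first $t$ mutations of $\bw$ (so $C_0=I$ and the $c$-vectors in question are the columns of the various $C_t$), the theorem is then equivalent to the claim that every diagonal entry of $G_t:=C_t^{\mathsf T}SC_t$ equals $1$.

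The engine of the argument is the observation that a mutation at $k$ replaces $C_t$ by $C_{t+1}=C_tE_k$, where $E_k=I+e_kw^{\mathsf T}$ is the mutation matrix determined by the current exchange matrix $B^{(t)}$ and by the sign $\varepsilon_k$ of the $k$-th $c$-vector (well defined by the sign-coherence of $c$-vectors); here $w_j=[\varepsilon_k b^{(t)}_{kj}]_+$ for $j\ne k$ and $w_k=-2$, with $[x]_+=\max(x,0)$. Consequently $G_{t+1}=E_k^{\mathsf T}G_tE_k$, and therefore $A^{(t+1)}:=2G_{t+1}=E_k^{\mathsf T}A^{(t)}E_k$. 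The central invariant I would prove by induction on $t$ along $\bw$ is that $A^{(t)}$ is a quasi-Cartan companion of the current quiver $B^{(t)}$, i.e.\ $A^{(t)}_{ii}=2$ and $|A^{(t)}_{ij}|=|b^{(t)}_{ij}|$ for all $i\ne j$. Since a companion has constant diagonal $2$, this invariant forces $G_t$ to have unit diagonal, which proves the theorem at once.

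The inductive step has a formal part and a combinatorial part. Formally, $E_k^{\mathsf T}A^{(t)}E_k$ is the reflection-type mutation rule for companions; expanding it gives, for $i,j\ne k$,
\[
A^{(t+1)}_{ij}=A^{(t)}_{ij}+[\varepsilon_k b^{(t)}_{ki}]_+\,A^{(t)}_{kj}+[\varepsilon_k b^{(t)}_{kj}]_+\,A^{(t)}_{ik}+2\,[\varepsilon_k b^{(t)}_{ki}]_+\,[\varepsilon_k b^{(t)}_{kj}]_+,
\]
while the $k$-th row and column are negated. I would then compare this entry by entry with the quiver mutation $B^{(t+1)}=\mu_k(B^{(t)})$. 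Matching the diagonal of $A^{(t+1)}$ already forces a clean sign condition: whenever $\varepsilon_k b^{(t)}_{ki}>0$ one must have $A^{(t)}_{ik}=-|b^{(t)}_{ki}|$, the \emph{negative} choice of sign, and this is exactly what makes $E_k$ preserve both the diagonal and the off-diagonal magnitudes. Matching the off-diagonal entries then reduces to further sign compatibilities among $A^{(t)}_{ik}$, $A^{(t)}_{kj}$, $A^{(t)}_{ij}$ of the same admissibility type; in the mixed case (one of $\varepsilon_k b^{(t)}_{ki},\varepsilon_k b^{(t)}_{kj}$ positive and the other negative) the companion correction and the quiver correction must exhibit the \emph{same} cancellation, which is what keeps their magnitudes equal. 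Thus the inductive hypothesis must be strengthened to carry this sign data together with the magnitude invariant.

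I expect the sign bookkeeping to be the main obstacle, and this is where the hypotheses enter. Because $\bw$ is fork-preserving, the analysis underlying Definition~\ref{def-forks1} applies at every step: a fork mutated away from its point of return is again a fork, whose new point of return is the vertex just mutated, so the defining inequalities $f_{ji}>f_{ir}$ and $f_{ji}>f_{rj}$ hold for $B^{(t)}$ throughout. These inequalities guarantee that the dominant arrows are large enough that the cancellation occurring in a quiver mutation never destroys an off-diagonal magnitude, the failure mode that does occur for non-abundant quivers, and hence that $A^{(t+1)}$ really is a companion of $B^{(t+1)}$. The genuine difficulty is to track simultaneously the signs $\varepsilon_k$ of the $c$-vectors and the signs $\sigma^{(t)}_{ij}$ of $A^{(t)}$ and to show they stay mutually compatible under every fork-preserving mutation; the footnote's warning that there is no simple closed form for the signs is precisely a symptom of this. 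Finally, the same computation should prove the rank-$3$ mutation-cyclic case verbatim, with the oriented triangle playing the role of the fork and the mutation-cyclic inequalities on the three arrow weights replacing the fork inequalities.
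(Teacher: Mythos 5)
Your reduction is sound as far as it goes, but it is essentially a repackaging of the paper's own strategy rather than an alternative to it: your invariant ``$2C_t^{\mathsf T}SC_t$ is a quasi-Cartan companion of $B^{(t)}$'' is, up to conjugation by diagonal sign matrices, exactly the paper's identity $a_{ij}^\bw = l_i^{\bw} A (l_j^{\bw})^T$ (Proposition \ref{lem-fork-admissible-quadratic-form}) combined with $l_{ij}^\bw=\epsilon_i^\bw\tau_j^\bw c_{ij}^\bw$ (Theorem \ref{thm-a-equal-b-sign}). The problem is that this invariant is precisely where all of the paper's work lies, and your proposal asserts it rather than proves it, so there is a genuine gap in the inductive step.

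Concretely, two ingredients are missing. First, conjugation by $E_k$ with the tropical sign preserves the companion property only if, at every step, $A^{(t)}_{ik}=-|b^{(t)}_{ik}|$ whenever $\varepsilon_k b^{(t)}_{ki}>0$; you correctly observe that this condition is \emph{forced} by diagonal-matching, but forcing is not proving --- one must show the sign pattern actually holds along the entire fork-preserving sequence. That requires structural control of the signs of $c$-vectors of forks, which is what the paper develops: Proposition \ref{prop-control-signs-fork} (imported from Ervin's work), Corollary \ref{cor-signs-por}, and Lemmas \ref{lem-signs-equal} and \ref{lem-signs-i-j-equal-k-not} rule out exactly the mixed-sign configurations (e.g.\ three vertices with equal $c$-vector signs spanning an oriented triangle) on which your ``formal part'' would otherwise break; none of this follows from the fork inequalities as you invoke them. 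Second, your stated mechanism --- that $f_{ji}>f_{ir}$ and $f_{ji}>f_{rj}$ make arrows large enough that cancellation never destroys an off-diagonal magnitude --- is not the operative one. Once admissibility holds at a given step, magnitude-matching after one mutation is automatic (Definition \ref{def-GIM-mutation} and \cite[Proposition 2.2]{seven_cluster_2014}); what can genuinely fail, and what makes the statement false for general quivers and mutation sequences, is that the mutated companion need not be \emph{admissible} again, so the induction cannot be iterated. The paper closes this hole not by abundance but by the facts that forks are complete and vortex-free (Lemma \ref{lem-fork-vortices}) and that admissibility survives mutation within the class of complete vortex-free quivers (Lemma \ref{lem-admissible-complete}). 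Your plan needs both of these arguments supplied before the central induction closes; as written, the hardest parts of the theorem are deferred to ``sign bookkeeping'' that is never carried out.
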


%As such, if every mutation-equivalent quiver is a fork, then every $c$-vector for such a quiver is a solution to a quadratic equation of the form \eqref{quad}.
%It has already been shown that such quivers are prevalent \cite{warkentin_exchange_2014}, but it was recently shown that such quivers are easily constructible \cite{clements_mutation_2023}.

%\begin{Thm}[\cite{ warkentin_exchange_2014, clements_mutation_2023}]
%    For each positive integer $n>2$, there are infinitely many quivers $\{Q_i\}$ with $n$ vertices such that $\text{Mut}(Q_i)$ are all disjoint and that all quivers in $\text{Mut}(Q_i)$ are forks for each $i$.
%\end{Thm}

%As a corollary to Theorem~\ref{thm-preserving-fork}, we get the following:

%\begin{Cor} \label{zero-forkless}
%Let $Q$ be a quiver such that all quivers in $\text{Mut}(Q)$ are forks. Then every $c$-vector of $Q$  is a solution to a quadratic equation of the form 
%\eqref{quad}.
%\end{Cor}

A quiver Q is called \emph{mutation-acyclic} if it is mutation-equivalent to an acyclic quiver, else it is called \emph{mutation-cyclic}.
Ahmet Seven informed us that he had independently discovered the following theorem.

\begin{Thm} \label{rank 3}
Let $Q$ be a mutation-cyclic quiver with 3 vertices. Then every $c$-vector of $Q$  is a solution to a quadratic equation of the form 
\eqref{quad} with $n=3$.
\end{Thm}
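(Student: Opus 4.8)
The plan is to reduce Theorem~\ref{rank 3} to the fork case already established in Theorem~\ref{thm-preserving-fork}. Since $Q$ is a rank $3$ mutation-cyclic quiver, every quiver in $\text{Mut}(Q)$ is a $3$-cycle (a non-acyclic rank $3$ quiver), and by Theorem~\ref{thm-connected-fork1} the class $\text{Mut}(Q)$ is mutation-infinite, so it contains forks. The first step is to choose a fork $F \in \text{Mut}(Q)$ via a mutation sequence $\bv$ so that $Q = \mu_{\bv}^{-1}(F)$, reducing the problem to understanding how $c$-vectors transform under the finitely many mutations in $\bv$. The key observation is that for rank $3$ the quadratic form $\sum x_i^2 + \sum_{i<j} \pm q_{ij} x_i x_j$ is essentially a Markov-type form, and these forms are preserved (up to the sign ambiguity) under the reflections that govern $c$-vector mutation.

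**Second, I would** set up the reflection framework for $c$-vectors. For a rank $3$ cyclic quiver, $c$-vector mutation acts by reflections determined by the quiver's skew-symmetric (or symmetrizable) matrix $B$. Define the symmetric bilinear form associated to the quadratic equation \eqref{quad}, and show that the mutation of $c$-vectors corresponds to applying a reflection that preserves this form. Concretely, each mutation $\mu_k$ sends a $c$-vector $\bc$ either to itself or to its image under a reflection $s_k$ in the hyperplane determined by the $k$-th coordinate, and I would verify that the entries $q_{ij}$ (the number of arrows) give precisely the Gram matrix entries that make the quadratic form invariant. Since in rank $3$ the value $q_{ij}$ is unchanged along a mutation-cyclic class (the Markov constant / the quantity $q_{12}^2 + q_{13}^2 + q_{23}^2 - q_{12}q_{13}q_{23}$ is a mutation invariant), the target quadratic form for $Q$ coincides with that for $F$ up to the undetermined signs.

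**Third**, combining the two steps: because every $c$-vector of $F$ lies on the quadric \eqref{quad} by Theorem~\ref{thm-preserving-fork}, and because the finite mutation sequence $\bv$ relating $Q$ to $F$ acts on $c$-vectors by form-preserving reflections, every $c$-vector of $Q$ must also lie on a quadric of the same shape, with the number of arrows $q_{ij}$ of $Q$ as coefficients. The sign ambiguity in \eqref{quad} is harmless here since the statement only asserts existence of \emph{some} choice of signs. One subtlety to handle carefully is the hypothesis in Theorem~\ref{thm-preserving-fork} that the mutation sequence does not begin at the point of return; in rank $3$ this must be shown to impose no essential restriction, because every $c$-vector of $Q$ arises (after transport by $\bv$) from \emph{some} admissible fork-preserving sequence, exploiting the freedom to choose among the infinitely many forks in $\text{Mut}(Q)$ guaranteed by Proposition~\ref{exgraph-connected-fork}.

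**The main obstacle** I expect is the interface between the fork result and the reflection transport: I must verify that the reflections induced by the finite sequence $\bv$ genuinely preserve the quadratic form with the \emph{specific} coefficients $q_{ij}$ of $Q$, rather than merely a projectively equivalent form. This hinges on the rank $3$ mutation invariance of the Markov-type quantity built from the arrow counts, so the crux is a careful computation showing that mutation of the $3$-cycle transforms $(q_{12}, q_{13}, q_{23})$ by the Markov recurrence while leaving the associated quadric class fixed. Once that invariance is pinned down, the transport of the fork's $c$-vectors back to $Q$ along $\bv$ completes the argument.
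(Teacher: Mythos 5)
There is a genuine gap, and it occurs at the very first step of your plan. You assume that $\text{Mut}(Q)$ contains a fork, citing Theorem~\ref{thm-connected-fork1}; but mutation-cyclic does \emph{not} imply mutation-infinite. The Markov quiver $M$ (all three weights equal to $2$, one of the paper's two running examples) is mutation-cyclic, yet its mutation class consists of a single quiver up to isomorphism, and $M$ is not a fork: Definition~\ref{def-forks1} requires the strict inequalities $f_{ji}>f_{ir}$ and $f_{ji}>f_{rj}$, which fail when all weights are equal. So for $M$ there is no fork $F\in\text{Mut}(M)$ and no sequence $\bv$ to transport along, and your reduction cannot even begin, although Theorem~\ref{rank 3} certainly covers $M$.

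Even when $\text{Mut}(Q)$ does contain forks (e.g.\ the paper's quiver $Q$ with weights $(2,3,3)$), the transport step is unsound for two reasons. First, invariance of the Markov constant does not make the quadratic forms of $Q$ and $F$ ``coincide up to sign'': $(2,3,3)$ mutates to $(3,3,7)$, and both triples have Markov constant $4$, but $\sum x_i^2\pm 2x_1x_2\pm 3x_1x_3\pm 3x_2x_3$ and $\sum x_i^2\pm 7x_1x_2\pm 3x_1x_3\pm 3x_2x_3$ are different forms for \emph{every} choice of signs. The correct relation between the two forms is the congruence $a_{ij}^\bw = l_i^\bw A (l_j^\bw)^T$ of Proposition~\ref{lem-fork-admissible-quadratic-form}, i.e.\ exactly the nontrivial content of the theorem; your argument would assume it rather than prove it. Second, writing $Q$ as a mutation of $F$ along the reversal of $\bv$, the sequences of $F$ that correspond (via the composition rule for $C$-matrices) to arbitrary mutation sequences of $Q$ begin by mutating $F$ back toward $Q$ --- that is, at the point of return --- and these are precisely the sequences that Theorem~\ref{thm-preserving-fork} excludes; passing to a different fork in the class only relocates, and never removes, this obstruction. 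The paper's route (as indicated in the abstract, ``the same proof techniques'') is direct rather than a reduction: a rank $3$ quiver is automatically vortex-free, and a mutation-cyclic rank $3$ quiver remains complete (all weights at least $2$) along every mutation sequence, so Lemma~\ref{lem-admissible-complete} and Proposition~\ref{lem-fork-admissible-quadratic-form} apply to every sequence from $Q$ itself, giving $l_i^\bw A (l_i^\bw)^T = 2$; combined with the identification of $l$-vectors with $c$-vectors up to sign (the analogue of Theorem~\ref{thm-a-equal-b-sign}), this yields Theorem~\ref{rank 3} with no detour through forks.
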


\iffalse

    For any mutation sequence $\bw$ preserving forks and any fork $B$, the GIM $A^\bw$ is admissible for $B^\bw$.

When $Q$ is a non-acyclic quiver with the set of vertices $\mathcal I=\{1,...,n\}$, the situation is vastly different, and little is known about the description of $c$-vectors. 
In an attempt to understand the general case, K.-H. Lee, K. Lee, and M. Mills \cite[Conjecture 1.9]{lee_geometric_2023} conjectured the existence of a linear ordering $\prec$ on $\mathcal I$ that enables us to understand behaviors of $c$-vectors in comparison with acyclic quivers.
For this purpose, they introduced a mutation of reflections in the universal Coxeter group for any quiver and defined a matrix representation of these reflections to be determined by a linear ordering $\prec$ on $\mathcal I$. 

The corresponding vectors to these reflections are called the $l$-vectors, and the matrix consisting of $l$-vectors is the L-matrix.
Although $l$-vectors are different from c-vectors in general, it is natural to ask the following:

\begin{Quest}
    For which quivers and which mutation sequences, are the (magnitudes of) $l$-vector entries equal to the (magnitudes of) $c$-vector entries? 
\end{Quest}

Experiments suggest that rank 3 quivers behave nicely\footnote{rank 4 counterexample? Dreaded Torus}. More precisely, we have the following conjectures.

\fi

The argument used by Ervin \cite{ervin_all_2024} motivates the following theorem. Fomin and Neville \cite[Lemma 6.14]{fomin_long_2023} also obtained a similar result.

\begin{Thm} \label{thm-base-case}
Let $\bw$ be a fork-preserving mutation sequence. 
The sign-vector (see Definition~\ref{C_sign}) of $C^\bw$ depends only on the signs of entries of initial exchange matrix $B$. 
In other words, the sign-vector is independent of the number of arrows between vertices of the initial quiver $Q$. 
\end{Thm}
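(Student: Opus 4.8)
The plan is to prove the statement by induction on the length $\ell$ of $\bw=(w_1,\dots,w_\ell)$, carrying a joint inductive hypothesis about the whole sequence $(B^{(t)},C^{(t)})_{t=0}^{\ell}$ generated by the mutations: (i) the sign pattern of the exchange matrix $B^{(t)}$ — that is, the orientation of the underlying fork — is a function of the sign pattern of the initial $B$ alone, and (ii) the sign-vector of $C^{(t)}$ is likewise a function of the sign pattern of $B$ alone. The base case $t=0$ is immediate, since $C^{(0)}=I$ has all-positive columns. For the inductive step at $k=w_{t+1}$ I would invoke the standard $C$-matrix mutation rule, which in column form reads $c'_k=-c_k$ and, for $j\neq k$,
\[
c'_j \;=\; c_j + \bigl[\varepsilon_k\, b^{(t)}_{kj}\bigr]_{+}\, c_k,
\]
where $\varepsilon_k\in\{+1,-1\}$ is the (sign-coherent) common sign of the entries of the $k$-th $c$-vector $c_k$. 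The key elementary observation is that the multiplier $\bigl[\varepsilon_k b^{(t)}_{kj}\bigr]_{+}$ is positive exactly when $\sgn(b^{(t)}_{kj})=\varepsilon_k$ and is zero otherwise, so whether any addition occurs is governed purely by signs.

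Part (i) is where fork theory enters. Because $\bw$ is fork-preserving, each $B^{(t)}$ is a fork and $w_{t+1}$ is never its current point of return, so by Warkentin's analysis of how forks mutate \cite{warkentin_exchange_2014} the new orientation and the new point of return are determined combinatorially from the old orientation, independently of the arrow multiplicities — abundance is precisely what prevents an arrow from reversing. Granting (i), both $\sgn(b^{(t)}_{kj})$ and $\varepsilon_k$ (the latter read off from the inductively known sign-vector of $C^{(t)}$) are functions of the initial orientation. The sign of $c'_j$ then breaks into cases: if the multiplier is $0$ the sign is unchanged; if the multiplier is positive and $c_j$ already carries the sign $\varepsilon_k$, then $c_j+(\text{positive})\,c_k$ keeps sign $\varepsilon_k$ with no dependence on magnitudes. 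The only delicate case is a positive multiplier together with $\sgn(c_j)=-\varepsilon_k$, where $c'_j=c_j+m\,c_k$ is a sum of oppositely signed contributions whose outcome could \emph{a priori} depend on the arrow counts.

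The hard part will be precisely this opposite-sign case, which is the technical heart of the argument. Here sign-coherence already guarantees that $c'_j$ is uniformly signed, so it suffices to show that the added term $m\,c_k$ always dominates $c_j$ entrywise, forcing $\sgn(c'_j)=\varepsilon_k=\sgn(b^{(t)}_{kj})$, a value fixed by the orientation. I would secure this by strengthening the induction with a quantitative dominance invariant on the pair $(B^{(t)},C^{(t)})$ — an entrywise inequality of the shape $|b^{(t)}_{kj}|\,|c_{ik}| > |c_{ij}|$ in the relevant opposite-sign configurations. The mechanism is that the defining fork inequalities ($f_{ji}>f_{ir}$ and $f_{ji}>f_{rj}$ across the point of return) and their preservation under fork-preserving mutation make abundance self-reinforcing: the across-arrows remain large enough, relative to the growing $c$-vector entries, to guarantee domination uniformly over all admissible arrow counts. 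Verifying that this dominance inequality is genuinely propagated by each fork-preserving mutation, simultaneously with (i) and (ii), is the step I expect to require the most care, since it couples the $B$-recursion and the $C$-recursion.

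Once the dominance invariant is in place, the sign of every entry of $C^{(t+1)}$ is determined by the sign pattern of $B^{(t)}$ and the sign-vector of $C^{(t)}$, both of which are functions of the initial orientation by the inductive hypothesis; this closes the induction and yields the claim. Conceptually this matches the reflection-theoretic picture behind \eqref{quad}: the $c$-vectors behave like images of basis roots under a product of reflections determined by $\bw$, whose positivity or negativity is a combinatorial feature of the orientation, while the arrow counts enter only through the quadratic form itself.
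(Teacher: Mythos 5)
Your induction skeleton---a joint inductive hypothesis that both the orientation of $B^{(t)}$ and the sign-vector of $C^{(t)}$ are functions of the initial orientation, plus a case analysis on the $C$-matrix mutation rule---is exactly how the result is actually established. But the step you defer is the entire technical content, and deferring it is a genuine gap: the paper itself does not prove this step either, it imports it from Ervin \cite{ervin_all_2024} as Proposition~\ref{prop-control-signs-fork} and then packages the resulting combinatorics as the ``last green vertex'' description (Lemma~\ref{lem-last-green-vertex}, Corollaries~\ref{cor-ordering-signs-cycle} and \ref{cor-last-green-vertex-mutation}), from which Theorem~\ref{thm-base-case} follows. Moreover, the invariant you propose is not quite the one that propagates. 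What is actually true (and what Ervin proves) is sharper in one direction and cleaner in another: (a) the dangerous configuration---positive multiplier together with $\sgn(c_i^{\bw})=-\sgn(c_k^{\bw})$---can \emph{only} occur when one of the two indices is the point of return $r$ of the current fork, i.e.\ the last-mutated vertex; for two indices both different from $r$, a positive multiplier forces equal signs (third bullet of Proposition~\ref{prop-control-signs-fork}); and (b) in that remaining case the invariant is the entrywise domination $\sgn(c_k^{\bw})c_k^{\bw} \geq \sgn(c_r^{\bw})c_r^{\bw}$ \emph{without} any $b$-factor (first bullet), after which abundance supplies the multiplier $|b_{rk}^{\bw}|\geq 2$ and forces the sum $c_r^{\bw}+|b_{rk}^{\bw}|\,c_k^{\bw}$ to carry the sign of $c_k^{\bw}$. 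Your invariant $|b_{kj}^{(t)}|\,|c_{ik}|>|c_{ij}|$, asserted for unspecified ``relevant'' pairs, is missing the structural restriction (a)---without it the inequality is simply false for general pairs---and folding the $b$-entry into the inequality makes the induction hard to close, since the $b$-entries themselves change at every step. Carrying out (a) and (b) amounts to reproving Ervin's proposition, and that work is absent from your proposal.

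Two smaller corrections. Your part (i) (the orientation of $B^{(t)}$ depends only on the initial orientation) is correct and is indeed Warkentin's fork analysis \cite{warkentin_exchange_2014}, but your stated mechanism, ``abundance is precisely what prevents an arrow from reversing,'' is false: arrows incident to the mutated vertex always reverse, and arrows between other pairs can reverse too---for instance, mutating a fork at $k\in F^{-}(r)$ reverses the arrow between $r$ and any $i\in F^{+}(r)$ with $i\to k$, because the fork inequalities give $f_{ik}>f_{ri}$ and abundance then gives $f_{ik}f_{kr}>f_{ri}$. What is true is that the \emph{outcome} of every such reversal is determined by the old orientation alone, which is all your argument needs. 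Finally, note that in the same-sign case you should also record that the multiplier being zero versus positive is itself orientation-determined (it is, by sign-coherence plus the inductive hypothesis), since that is what lets the case split be made uniformly across all choices of multiplicities.
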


\begin{Cor} \label{cor-reflection-triple}
 Let $n$ be any positive integer, and let $Q$ be a fork with $n$ vertices. 
For each fork-preserving mutation sequence $\bw$ from $Q$, the corresponding $n$-tuple of reflections $(\mathbf{r}_1^\bw, \mathbf{r}_2^\bw, \dots,  \mathbf{r}_n^\bw)$ (see Definition~\ref{def-r}) depends only on the signs of entries of the initial exchange matrix $B$.
\end{Cor}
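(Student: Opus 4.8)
The plan is to obtain Corollary~\ref{cor-reflection-triple} as a direct consequence of Theorem~\ref{thm-base-case}, by carrying the sign-determinacy of $C^\bw$ up to the level of the reflections. Recall from Definition~\ref{def-r} that the tuple $(\mathbf{r}_1^\bw,\dots,\mathbf{r}_n^\bw)$ is built by a recursion along $\bw$ inside the universal Coxeter group: one starts from the fixed tuple of Coxeter generators $(s_1,\dots,s_n)$, and each mutation $\mu_k$ either leaves $\mathbf{r}_i^\bw$ unchanged or replaces it by the conjugate $\mathbf{r}_k^\bw\,\mathbf{r}_i^\bw\,\mathbf{r}_k^\bw$, the branch being dictated by the tropical sign $\epsilon_k$ of the $k$-th $c$-vector together with the sign of the relevant entry of $B^\bw$. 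The essential observation is that the group element produced never sees the arrow multiplicities $q_{ij}$ directly; only the branching choices depend on $Q$, and only through sign data.

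First I would induct on the length $\ell(\bw)$. The base case $\ell(\bw)=0$ is immediate, since the tuple is $(s_1,\dots,s_n)$, which is independent of $Q$ altogether. For the inductive step write $\bw'=\bw k$ and assume the tuple $(\mathbf{r}_i^\bw)$ is determined by the signs of $B$. By the recursion, each $\mathbf{r}_i^{\bw'}$ is a function of the previous tuple (already $B$-sign-determined by hypothesis) together with the sign data read off at stage $\bw$. That sign data consists of the sign-vector of $C^\bw$, which by Theorem~\ref{thm-base-case} depends only on the signs of $B$, and of the signs of the $k$-th column of $B^\bw$. For the latter I would invoke fork-preservation: because $\bw$ never mutates at the point of return, Warkentin's description of fork mutation keeps every intermediate quiver $Q^\bw$ a fork of a combinatorial type determined by the type of $Q$, so the sign pattern of $B^\bw$ is itself a function of the sign pattern of $B$, independent of the $q_{ij}$. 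Hence every input to the update is $B$-sign-determined, and therefore so is $(\mathbf{r}_i^{\bw'})$, closing the induction.

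The step I expect to be the main obstacle is confirming that the recursion of Definition~\ref{def-r} genuinely consumes no magnitude information — that is, that each branch is decided by a strict sign condition and that the update is an intrinsic conjugation in the abstract universal Coxeter group rather than an operation through matrix entries that happen to depend on the $q_{ij}$. Coupled with this is the need to pin down that the sign pattern of $B^\bw$ along a fork-preserving sequence really is governed by the signs of $B$ alone; this is the one place where magnitudes could in principle intervene through the exchange-matrix mutation rule, and I would handle it by the same fork inequalities that drive the proof of Theorem~\ref{thm-base-case}. Once these two points are secured, the corollary is little more than a translation of Theorem~\ref{thm-base-case} from $c$-vectors to the reflections they determine.
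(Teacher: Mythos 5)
Your proposal is correct and follows essentially the route the paper intends: the corollary is presented as an immediate consequence of Theorem~\ref{thm-base-case}, the point being that the recursion in Definition~\ref{def-r} branches only on the strict sign condition $b_{ik}^\bw c_k^\bw>0$, where the sign of $c_k^\bw$ is controlled by Theorem~\ref{thm-base-case} and the sign pattern of $B^\bw$ along a fork-preserving sequence is combinatorially determined (this is exactly the content of Lemma~\ref{lem-fork-mutate-order}, since completeness plus the acyclic ordering on $F'\setminus\{v_j\}$ and reversal of arrows at the mutated vertex fix all arrow directions). Your two flagged ``obstacles'' are precisely the two ingredients the paper supplies, so the argument closes as you describe.
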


From this, we are able to prove that the product of reflections is equal to a Coxeter element. More precisely, we have the following:

\begin{Thm} \label{thm-coxeter-element}
 Let $n$ be any positive integer, and let $Q$ be a fork with $n$ vertices. 
For each fork-preserving mutation sequence $\bw$ from $Q$, we have $$\mathbf{r}_{\lambda(1)}^\bw ... \mathbf{r}_{\lambda(n)}^\bw = \mathbf{r}_{\rho(1)}...\mathbf{r}_{\rho(n)}$$ for some permutations $\lambda, \rho \in\mathfrak{S}_n$, where $\mathfrak{S}_n$ is the symmetric group on $\{1,...,n\}$ and $\mathbf{r}_1,...,\mathbf{r}_n$ are the initial reflections.
The $\lambda$ and $\rho$ permutations are well controlled.
%  For each cyclic mutation sequence ww, there exists a linear ordering a\bw≺b\bw≺c\bwa^\bw \prec b^\bw \prec c^\bw on {1,2,3}\{1,2,3\} such that r\bwλ(1)r\bwλ(2)r\bwλ(3)=rρ(1)rρ(2)rρ(3)\mathbf{r}_{\lambda(1)}^\bw \mathbf{r}_{\lambda(2)}^\bw \mathbf{r}_{\lambda(3)}^\bw = \mathbf{r}_{\rho(1)}\mathbf{r}_{\rho(2)}\mathbf{r}_{\rho(3)} for some permutations λ,ρ∈S3\lambda, \rho \in\mathfrak{S}_3.
\end{Thm}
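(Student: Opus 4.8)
The plan is to argue by induction on the length of the fork-preserving mutation sequence $\bw$, tracking the product of the mutated reflections as a single fixed Coxeter element throughout. First I would fix a reference ordering $\rho\in\mathfrak{S}_n$ adapted to the fork structure of $Q$: list the vertices of $F^-(r)$ in an order refining the acyclic orientation of the full subquiver they induce, then the point of return $r$, then the vertices of $F^+(r)$ in an acyclic order. Set $c_0 = \mathbf{r}_{\rho(1)}\cdots\mathbf{r}_{\rho(n)}$. The base case, in which $\bw$ is empty, is immediate with $\lambda=\rho$, since then $\mathbf{r}_i^{\bw}=\mathbf{r}_i$.

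For the inductive step, write $\bw = \mu_k\,\bw'$ with $k\neq r$ (fork-preservation), and assume there is an ordering $\lambda'\in\mathfrak{S}_n$ with $\mathbf{r}_{\lambda'(1)}^{\bw'}\cdots\mathbf{r}_{\lambda'(n)}^{\bw'} = c_0$. I would next recall from Definition~\ref{def-r} the single-mutation rule for the reflections: the reflection indexed by $k$ is unchanged, while each remaining reflection $\mathbf{r}_i^{\bw'}$ is either left fixed or replaced by its conjugate $\mathbf{r}_k^{\bw'}\,\mathbf{r}_i^{\bw'}\,\mathbf{r}_k^{\bw'}$, according to the sign of the corresponding entry of the exchange matrix. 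By Corollary~\ref{cor-reflection-triple} (equivalently Theorem~\ref{thm-base-case}), this sign pattern, and hence the set $S\subseteq\{1,\dots,n\}$ of indices that get conjugated, depends only on the signs of the initial exchange matrix $B$; because every quiver along $\bw$ is again a fork, the pattern is forced by the fork geometry around the point of return.

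The algebraic heart of the argument is to show that the conjugation by $\mathbf{r}_k^{\bw'}$ can be absorbed by re-ordering the product. I would use the telescoping identity
$$\mathbf{r}_k\bigl(\mathbf{r}_{i_1}\cdots\mathbf{r}_{i_m}\bigr)\mathbf{r}_k = \bigl(\mathbf{r}_k\mathbf{r}_{i_1}\mathbf{r}_k\bigr)\cdots\bigl(\mathbf{r}_k\mathbf{r}_{i_m}\mathbf{r}_k\bigr),$$
together with $\mathbf{r}_k^2=1$: if the reflections whose indices lie in $S$ occupy a contiguous block of $c_0$ lying entirely on one side of the factor $\mathbf{r}_k^{\bw'}$, then cyclically sliding $\mathbf{r}_k^{\bw'}$ across that block simultaneously produces all the required conjugates and leaves the total product equal to $c_0$. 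This yields a new ordering $\lambda=\lambda_{\bw}\in\mathfrak{S}_n$ with $\mathbf{r}_{\lambda(1)}^{\bw}\cdots\mathbf{r}_{\lambda(n)}^{\bw}=c_0=\mathbf{r}_{\rho(1)}\cdots\mathbf{r}_{\rho(n)}$, completing the induction.

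The main obstacle is exactly the contiguity requirement in the last step: a priori the conjugated indices $S$ need not be adjacent in the ordering $\lambda'$. I expect to resolve this by maintaining through the induction an ordering $\lambda'$ compatible with the acyclic structure on $F^-(r)$ and $F^+(r)$ that places $r$ in its distinguished slot, so that the indices conjugated at each fork-preserving mutation always form such a block; commutation relations among reflections whose vertices are non-adjacent in the quiver provide the freedom to slide factors into the needed positions. Verifying that this block structure is preserved by every fork-preserving mutation, using the defining inequalities $f_{ji}>f_{ir}$ and $f_{ji}>f_{rj}$ of a fork, is the step requiring the most care, and it is also what makes the permutations $\lambda$ and $\rho$ explicit and well controlled.
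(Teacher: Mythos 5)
Your skeleton---induction on the length of $\bw$, with the conjugations produced by a single mutation absorbed by telescoping $\mathbf{r}_k^{\bw}$ across a contiguous block of conjugated factors---is exactly the mechanism of the paper's proof, but two of your specific devices fail, and they sit precisely where the content of the theorem lies. First, the ``commutation relations among reflections whose vertices are non-adjacent in the quiver'' do not exist: a fork is abundant, so every pair of vertices is joined by at least two arrows (there are no non-adjacent pairs), and $\mathcal W$ is the universal Coxeter group, a free product of copies of $\mathbb{Z}/2\mathbb{Z}$, in which no two distinct reflections commute. Hence you have no slack; the conjugated set must be \emph{exactly} a contiguous block sitting next to the factor indexed by $k$ in the maintained ordering. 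The paper secures this not with an ordering read off from the quiver alone, but with the cyclic rotation of the fork-admissible ordering whose starting point is the \emph{last green vertex}: the maintained linear order lists first the vertices with positive $c$-vectors, then $r$'s side, then those with negative $c$-vectors (Lemma~\ref{lem-last-green-vertex}, Corollary~\ref{cor-ordering-signs-cycle}), and the facts that $\{i : b_{ik}^{\bw}c_k^{\bw} > 0\}$ is the block between $k$ and $r$ (with $r$ included exactly when $b_{rk}^{\bw}>0$) and that the slid ordering is again of this canonical form require the $c$-vector sign control of Proposition~\ref{prop-control-signs-fork} together with Corollaries~\ref{cor-last-green-vertex-mutation} and~\ref{cor-ordering-permutation-colors}. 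Your proposed invariant ordering (``compatible with the acyclic structure \dots\ that places $r$ in its distinguished slot'') depends only on the quiver, not on the $C$-matrix; since distinct rotations of the same cyclic word are distinct elements of a free product, the rotation point at which the linear product starts must move with the signs of the $c$-vectors, so a quiver-only prescription cannot close the induction.

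Second, fixing $c_0$ in advance as $[F^-(r)$ in acyclic order$]\,\mathbf{r}_r\,[F^+(r)$ in acyclic order$]$ is incompatible with the statement you are trying to maintain: the invariant element necessarily depends on the first mutation (Corollary~\ref{cor-coxeter-element-right-side}); it is $\mathbf{r}_{n-1}\cdots\mathbf{r}_1\mathbf{r}_r$ or $\mathbf{r}_r\mathbf{r}_{n-1}\cdots\mathbf{r}_1$, the reverse of the acyclic order of $F\setminus\{r\}$ with $\mathbf{r}_r$ appended at one end, according to whether the first mutation lies in $B^+(r)$ or $B^-(r)$. Already in rank $3$, for the fork with $i \to r \to j$ and $j \to i$, mutating at $i$ gives the reflections $\mathbf{r}_i,\ \mathbf{r}_r,\ \mathbf{r}_i\mathbf{r}_j\mathbf{r}_i$, and the six possible ordered products reduce to $\mathbf{r}_i\mathbf{r}_r\mathbf{r}_i\mathbf{r}_j\mathbf{r}_i$, $\mathbf{r}_j\mathbf{r}_i\mathbf{r}_r$, $\mathbf{r}_r\mathbf{r}_j\mathbf{r}_i$, $\mathbf{r}_r\mathbf{r}_i\mathbf{r}_j$, $\mathbf{r}_i\mathbf{r}_j\mathbf{r}_r$, $\mathbf{r}_i\mathbf{r}_j\mathbf{r}_i\mathbf{r}_r\mathbf{r}_i$; none of these equals your $c_0=\mathbf{r}_i\mathbf{r}_r\mathbf{r}_j$, so your induction fails at its very first step. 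The repair is the paper's: take the base case to be the one-term sequence $[k]$ and let $\rho$ (hence the fixed element) be determined by that first mutation, then run your sliding argument with the sign-dependent cyclic ordering described above.
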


\begin{Cor} \label{cor-non-self-crossing}
 Let $n$ be any positive integer, and let $Q$ be a fork with $n$ vertices. 
For each fork-preserving mutation sequence $\bw$ from $Q$, there exist pairwise non-crossing and non-self-crossing admissible curves $\eta_i^\bw$ (see Definition~\ref{def-adm})  such that $\mathbf{r}_i^\bw = \nu(\eta_i^\bw)$ for every $i \in \{1,...,n\}$.
\end{Cor}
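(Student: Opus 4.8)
The plan is to extract the non-crossing curve configuration directly from the Coxeter-element factorization supplied by Theorem~\ref{thm-coxeter-element}, working inside the curve model that underlies Nguyen's proof of the Lee--Lee conjecture. Recall that in the universal Coxeter group generated by $\mathbf{r}_1,\dots,\mathbf{r}_n$, the map $\nu$ of Definition~\ref{def-adm} assigns to each admissible curve a reflection, and a single admissible curve is non-self-crossing exactly when the corresponding reflection is reduced, i.e.\ when its expression $w\mathbf{r}_i w^{-1}$ as a conjugate of a generator admits no cancellation that would force a self-intersection. First I would record that each $\mathbf{r}_i^\bw$ is reduced in this sense: by Corollary~\ref{cor-reflection-triple} the reflection $n$-tuple depends only on the signs of $B$, so it coincides with the tuple produced by the same sign data in the acyclic situation, where non-self-crossing curves $\eta_i^\bw$ realizing the individual $\mathbf{r}_i^\bw$ are already available.

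The core step is to upgrade these individual curves to a \emph{pairwise} non-crossing family. Here I would invoke Theorem~\ref{thm-coxeter-element}: there exist permutations $\lambda,\rho\in\mathfrak{S}_n$ with
$$\mathbf{r}_{\lambda(1)}^\bw\cdots\mathbf{r}_{\lambda(n)}^\bw=\mathbf{r}_{\rho(1)}\cdots\mathbf{r}_{\rho(n)},$$
so that, after reordering by $\lambda$, the tuple $(\mathbf{r}_1^\bw,\dots,\mathbf{r}_n^\bw)$ is a factorization of the Coxeter element $c=\mathbf{r}_{\rho(1)}\cdots\mathbf{r}_{\rho(n)}$ into exactly $n$ reflections. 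In the universal Coxeter group the absolute length of $c$ equals $n$, so any factorization of $c$ into $n$ reflections is automatically reduced in the absolute order; this is the algebraic avatar of pairwise non-crossing. Translating through the curve model, a reduced length-$n$ factorization of $c$ corresponds to a fan of $n$ admissible curves leaving the basepoint with all pairwise intersections inessential, which can therefore be realized by pairwise non-crossing $\eta_1^\bw,\dots,\eta_n^\bw$ satisfying $\nu(\eta_i^\bw)=\mathbf{r}_i^\bw$.

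The hard part will be the final translation from the algebraic statement ``reduced factorization of a Coxeter element'' to the topological statement ``pairwise non-crossing admissible curves.'' This demands the full dictionary of the curve model: I would need to match the ordering data carried by the well-controlled permutations $\lambda$ and $\rho$ to the cyclic order in which the curves leave the basepoint, and to verify that no pair of curves is forced to cross by its winding around intermediate punctures. The fork-preserving hypothesis enters precisely here, since via Corollary~\ref{cor-reflection-triple} it keeps the reflection tuple inside the sign regime on which the correspondence between reduced reflection factorizations and non-crossing fans is known to hold; granting that dictionary, the existence of the curves $\eta_i^\bw$ follows at once.
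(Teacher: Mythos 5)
Your proposal follows essentially the same route as the paper: use Theorem~\ref{thm-coxeter-element} to exhibit $(\mathbf{r}_{\lambda(1)}^\bw,\dots,\mathbf{r}_{\lambda(n)}^\bw)$ as a length-$n$ reflection factorization of a Coxeter element, then pass through the correspondence between such factorizations and non-crossing admissible curves. The ``full dictionary'' you defer as the hard part is precisely the theorem of \cite{nguyen_proof_2022} that the paper cites outright---if a product of $n$ reflections equals a Coxeter element, then each reflection is realized by a non-self-crossing admissible curve, the curves are pairwise non-crossing, and the ordering of the curves matches the ordering of the Coxeter element (the paper applies this with the order given by $\rho$)---so nothing beyond that citation needs to be constructed.
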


\iffalse
\KL{revisit what is below}

A quiver Q is called \emph{mutation-acyclic} if it is mutation-equivalent to an acyclic quiver, else it is called \emph{mutation-cyclic}. A mutation sequence $w = \left[ {{i_1},{i_2},...,{i_\ell }} \right]$  is called \emph{cyclic} if $i_{j-1}\ne i_j$ for each $j\in \{ 2,...,\ell \}$  and $i_\ell\circ\cdots\circ i_2\circ i_1(Q)$ is non-acyclic for every ${i_1},...,{i_\ell } \in \{ 1,...,\ell \}$. If Q is mutation-cyclic, then every mutation sequence is cyclic. 
Throughout the paper, we assume that $Q$ is a rank 3 non-acyclic quiver and each mutation sequence $\bw$ (or $\bw[j]$ or $\bw[k]$) we consider is  cyclic. In particular, a mutation-cyclic rank 3 quiver satisfies this assumption for any mutation sequence. Our first main theorem is as follows.
\fi

The following theorem plays a pivotal role in proving Theorem \ref{thm-l-equals-c}. 

\begin{Thm} \label{thm-linear-ordering}
 Let $n$ be any positive integer, and let $B$ be a fork with $n$ vertices. 
For each fork-preserving mutation sequence $\bw$ from $B$,  the GIM $A^\bw$ is admissible for $B^\bw$.
\end{Thm}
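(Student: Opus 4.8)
The plan is to prove admissibility by induction on the length of the fork-preserving sequence $\bw$, after first localizing the condition to rank-$3$ subquivers. The crucial opening observation is that a fork is abundant, and every matrix $B^\bw$ produced by a fork-preserving sequence is again a fork (Theorem~\ref{thm-connected-fork1} together with the fork-preserving hypothesis); hence the underlying diagram of $B^\bw$ is the complete graph $K_n$. In a complete graph the only chordless cycles are the $\binom{n}{3}$ triangles, so the admissibility of the GIM $A^\bw$ for $B^\bw$ reduces to a single sign condition imposed on each $3$-vertex full subquiver: the product of the signs of the three relevant off-diagonal entries of $A^\bw$ must match the oriented case when that triangle is a $3$-cycle in $B^\bw$, and the acyclic case otherwise. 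This turns the theorem into a finite, rank-$3$ bookkeeping problem, to be controlled uniformly by the sign-rigidity of Theorem~\ref{thm-base-case} and Corollary~\ref{cor-reflection-triple}.

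For the base case I would check the initial GIM $A$ against the fork $B$ directly from Definition~\ref{def-forks1}. The defining data of a fork---the inequalities $f_{ji}>f_{ir}$ and $f_{ji}>f_{rj}$ for $i\in F^-(r)$, $j\in F^+(r)$, together with the acyclicity of the full subquivers on $F^-(r)$ and on $F^+(r)$---determine, for every triple of vertices, whether the induced triangle is oriented or acyclic and fix the orientations of its arrows. Since $A$ is built with $a_{ii}=2$ and $|a_{ij}|=q_{ij}$, matching its signs against these orientations verifies the triangle condition: triangles internal to $F^-(r)$ or $F^+(r)$ are acyclic, while triangles through the point of return $r$ fall into a short list of configurations governed by the two fork inequalities.

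For the inductive step I would assume $A^\bw$ admissible for $B^\bw$ and mutate at a vertex $k\neq r$, so that $B^{\bw k}$ is again a fork. I would propagate the mutation simultaneously on $B^\bw$ (sign flips on the edges at $k$, and the usual update on the remaining edges) and on $A^\bw$ via the GIM mutation rule that preserves symmetry and the diagonal $2$'s, then re-examine each triangle. Triangles containing $k$ transform by an honest rank-$3$ mutation at $k$, so their admissibility is governed by the rank-$3$ theory; in particular each abundant oriented triangle is mutation-cyclic, placing it under the analysis behind Theorem~\ref{rank 3}, while each acyclic triangle is handled by the acyclic case. The essential leverage is Theorem~\ref{thm-base-case}: because every relevant sign is a function of the initial signs of $B$ alone, the entire update can be carried out at the level of signs, independent of the rapidly growing arrow multiplicities, so only finitely many sign-patterns ever occur along any fork-preserving sequence.

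The hard part will be controlling the triangles that are affected by the mutation at $k$ without containing $k$---for which the update of the three edges is not an intrinsic rank-$3$ operation---and, among all triangles, those that also meet the point of return $r$. For these a magnitude comparison is useless, since mutation can reverse an arrow; one must instead show that the induced change in the signs of $A^\bw$ exactly compensates the change in the triangle's orientation, so that its admissibility parity is preserved. I expect the cleanest route is to enumerate, using the fork inequalities and the constraint $k\neq r$, the finitely many sign-configurations such a triangle can carry inside a fork, and to verify invariance of the triangle condition in each, with Corollary~\ref{cor-reflection-triple} guaranteeing that no other configuration can arise. Once the triangle condition is shown to survive the mutation for every triple, $A^{\bw k}$ is admissible for $B^{\bw k}$ and the induction closes.
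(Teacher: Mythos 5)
Your skeleton matches the paper's in outline (reduce to triangles via completeness, induct on the mutation sequence, and isolate triangles not containing the mutated vertex $k$ as the hard case), but there is a genuine gap: that hard case is never actually handled. You defer it to an unexecuted ``enumeration of sign-configurations'' constrained by the fork inequalities and $k\neq r$, with no stated mechanism that excludes a bad configuration. The paper's proof (Lemma~\ref{lem-admissible-complete}) shows exactly where the danger lies: when the triangle $Z=\{i,j,\ell\}$ avoids $k$ and $\sgn(b_{ik})=\sgn(b_{jk})=-\sgn(b_{\ell k})$, the survival of the triangle sign condition hinges on the fact that otherwise $\{i,j,k,\ell\}$ would induce a \emph{vortex} in $\mu_k(B)$ --- and vortex-freeness of forks (Lemma~\ref{lem-fork-vortices}) is precisely the structural input that rules this out. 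The fork inequalities you cite constrain arrows at the point of return $r$ and say nothing directly about triangles avoiding $r$, so your enumeration, as planned, could run into the bad case with no way to dismiss it. Identifying and proving the vortex-free property is the missing idea, not a detail.

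A secondary problem is that the tools you lean on do not deliver what you claim. Theorem~\ref{thm-base-case} and Corollary~\ref{cor-reflection-triple} concern sign-coherence of $c$-vectors and of the reflections $\mathbf{r}_i^\bw$; they do not, by themselves, determine how the orientations of triangles of $B^\bw$ or the signs of the entries of $A^\bw$ evolve under mutation --- establishing that is the actual content of the induction (done in the paper by the direct computation in Lemma~\ref{lem-admissible-complete}, using $|a_{ij}|=|b_{ij}|$ to compare magnitudes in the triangles through $k$, and later refined in Theorem~\ref{thm-a-equal-b-sign}). Likewise, Theorem~\ref{rank 3} is a statement about $c$-vectors solving a quadratic equation; in this paper it is a \emph{consequence} of the admissibility machinery, so invoking ``the analysis behind Theorem~\ref{rank 3}'' to handle triangles containing $k$ is circular as an input. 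Those triangles must instead be treated by the explicit sign computation on $\mu_k(A)$, which your proposal gestures at but does not carry out.
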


The following theorem implies Theorem~\ref{thm-preserving-fork}. 

\begin{Thm} \label{thm-l-equals-c}
Let $Q$ be a fork with $n$ vertices, and let $\bw$ be a fork-preserving mutation sequence. For each $i\in\{1,...,n\}$,
 there exists a diagonal matrix $D_i^\bw$ such that $(D_i^\bw)^2=1$ and $l_i^{\bw}= c_i^{\bw} D_i^\bw$ for the linear order given in Corollary \ref{cor-fork-admissible-ordering}. In other words, the entries of $l$-vectors are equal to the entries of $c$-vectors up to sign for said linear ordering. 
\end{Thm}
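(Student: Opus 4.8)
The plan is to induct on the length $\ell$ of the fork-preserving sequence $\bw$, comparing how the $c$-vectors and the $l$-vectors transform under one additional mutation. Fix once and for all the admissible linear order supplied by Corollary~\ref{cor-fork-admissible-ordering}, so that $l_i^\bw$, the reflections $\mathbf{r}_i^\bw$, and the GIM $A^\bw$ are all unambiguous. The inductive claim is that there are diagonal matrices $D_i^\bw$ with $(D_i^\bw)^2 = I$ and $l_i^\bw = c_i^\bw D_i^\bw$ for every $i$; equivalently, the $L$-matrix and the $C$-matrix agree entry by entry up to sign. When $\ell = 0$ both matrices are the identity, so $D_i = I$ works and the base case holds.

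For the inductive step I would append a mutation at a vertex $k$, which by fork-preservation is never the point of return, and write the two update rules side by side. By sign coherence each $c$-vector $c_k^\bw$ has a single sign $\varepsilon \in \{\pm 1\}$, and the $C$-matrix mutates by
$$c_i^{\bw k} = \begin{cases} -c_k^\bw & i = k,\\ c_i^\bw + [\varepsilon\, b_{ki}^\bw]_+\, c_k^\bw & i \neq k,\end{cases}$$
where $[x]_+ = \max(x,0)$, so only the indices with $\varepsilon\, b_{ki}^\bw > 0$ receive a correction. On the reflection side, mutating at $k$ leaves $\mathbf{r}_i^\bw$ fixed or conjugates it by $\mathbf{r}_k^\bw$ according to the sign rule of Definition~\ref{def-r}; on the level of roots conjugation acts by $l_i^\bw \mapsto l_i^\bw - \la l_i^\bw, l_k^\bw\ra_{A^\bw}\, l_k^\bw$, where $\la\,,\,\ra_{A^\bw}$ is the symmetric form attached to $A^\bw$. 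Thus both rules reflect $c_i$, resp.\ $l_i$, in the $k$-direction, and each correction is gated by a sign condition.

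The core of the argument is to match the two updates term by term after substituting the inductive hypothesis $l_i^\bw = c_i^\bw D_i^\bw$. Theorem~\ref{thm-linear-ordering} tells us that $A^\bw$ is admissible for $B^\bw$, so $|a_{ki}^\bw| = q_{ki}^\bw = |b_{ki}^\bw|$ and the off-diagonal signs of $A^\bw$ are exactly those dictated by the linear order; this forces the magnitude of the coefficient $\la l_i^\bw, l_k^\bw\ra_{A^\bw}$ to equal that of $[\varepsilon\, b_{ki}^\bw]_+$ whenever the latter is nonzero. To see that the two corrections are switched on for the same set of indices --- i.e.\ that the $[\cdot]_+$ truncation is reproduced by the conjugation-or-not alternative of Definition~\ref{def-r} --- I would appeal to Theorem~\ref{thm-base-case}: the signs of $C^\bw$, hence $\varepsilon$ and each sign $\varepsilon\, b_{ki}^\bw$, depend only on the signs of the initial $B$, which are precisely the data encoded by the admissible order and the GIM signs. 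Comparing the two outputs then expresses $l_i^{\bw k}$ as $c_i^{\bw k}$ with a prescribed subset of coordinates negated, and reading off those negations defines $D_i^{\bw k}$; since it differs from $D_i^\bw$ by a sign pattern, $(D_i^{\bw k})^2 = I$ is automatic.

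The step I expect to be hardest is exactly this last coincidence of active index sets together with the internal consistency of the sign matrices: one must verify that the coordinates flipped in passing from $c_i^{\bw k}$ to $l_i^{\bw k}$ form a genuine diagonal sign change, uniform enough that $D_i^{\bw k}$ is well defined and squares to the identity, and that the conjugation in the reflection rule fires on the same indices as the $[\cdot]_+$ in the $C$-rule. This is where fork-preservation does the real work, since through Theorems~\ref{thm-base-case} and~\ref{thm-linear-ordering} it keeps the coherent sign $\varepsilon$ aligned with the prescribed GIM signs at every stage; Corollary~\ref{cor-reflection-triple} further guarantees that the reflection data, and hence the entire comparison, depends only on the signs of $B$, so the sign analysis is finite and uniform over all fork-preserving sequences and the induction closes.
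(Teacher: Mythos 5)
Your skeleton---induct on the length of $\bw$, compare the $C$-mutation rule with the $l$-vector rule, use Theorem~\ref{thm-linear-ordering} to match magnitudes $|a_{ik}^\bw|=|b_{ik}^\bw|$---is the same as the paper's, but the inductive hypothesis you carry is too weak to close the induction, and the results you invoke for the signs cannot do the remaining work. First, a small point: the ``active index sets'' you worry about coincide automatically, since by Definition~\ref{def-r} and the mutation rule \eqref{eqn-mmuu} both corrections are gated by the \emph{same} inequality $b_{ik}^\bw c_k^\bw>0$ (note your $[\varepsilon\, b_{ki}^\bw]_+$ has transposed indices, which would reverse the gating). The real problem is elsewhere. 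When the correction fires, Lemma~\ref{lem-l-mutation-A} gives $l_i^{\bw[k]}=l_i^\bw-a_{ik}^\bw l_k^\bw$ while $c_i^{\bw[k]}=c_i^\bw+|b_{ik}^\bw|c_k^\bw$. Substituting the hypothesis $l_i^\bw=c_i^\bw D_i^\bw$ and $l_k^\bw=c_k^\bw D_k^\bw$ yields $l_i^{\bw[k]}=c_i^\bw D_i^\bw-a_{ik}^\bw c_k^\bw D_k^\bw$, and for this to equal $(c_i^\bw+|b_{ik}^\bw|c_k^\bw)\,D_i^{\bw[k]}$ you need, coordinate by coordinate, $(D_i^\bw)_{jj}=-\sgn(a_{ik}^\bw)\,(D_k^\bw)_{jj}$: that is, (i) the matrices $D_i^\bw$ and $D_k^\bw$ must agree up to a single global sign, and (ii) you must know the exact sign of $a_{ik}^\bw$ relative to those global signs. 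Neither follows from what you cite. ``Some sign matrices exist'' says nothing relating $D_i^\bw$ to $D_k^\bw$; Theorem~\ref{thm-linear-ordering} controls only $|a_{ik}^\bw|$; and Theorem~\ref{thm-base-case} and Corollary~\ref{cor-reflection-triple} say the data depend only on the signs of the initial $B$---which makes the problem combinatorially finite but does not prove the sign identity you need at each step.

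This is exactly the gap the paper fills by strengthening the induction to Theorem~\ref{thm-a-equal-b-sign}: it exhibits explicit sign tuples $\epsilon^\bw,\tau^\bw$ (Definition~\ref{def-epsilon-tau}) with $l_{ij}^\bw=\epsilon_i^\bw\tau_j^\bw c_{ij}^\bw$---so all your $D_i^\bw$ are forced to be $\epsilon_i^\bw\,\mathrm{diag}(\tau_1^\bw,\dots,\tau_n^\bw)$, one common diagonal up to global sign---and \emph{simultaneously} proves a sign formula for every GIM entry, $a_{ij}^\bw=-\epsilon_i^\bw\epsilon_j^\bw|b_{ij}^\bw|$ when $b_{ij}^\bw c_j^\bw\geq 0$ and $a_{ij}^\bw=-\epsilon_i^\bw\epsilon_j^\bw\sgn(c_i^\bw)\sgn(c_j^\bw)|b_{ij}^\bw|$ otherwise. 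Granting both statements, the step you sketched is a two-line computation; but maintaining the $a_{ij}^\bw$ formula under mutation is the bulk of the paper's argument, a case analysis that repeatedly needs fork-specific control over which sign configurations of $c$-vectors can coexist with which orientations (Proposition~\ref{prop-control-signs-fork}, Corollary~\ref{cor-signs-por}, Lemmas~\ref{lem-signs-equal} and~\ref{lem-signs-i-j-equal-k-not}) to rule out configurations in which the formula would fail. None of this is present in, or derivable from, your proposal, so the inductive step as you set it up does not close.
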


In the acyclic case, there is an obvious linear ordering that works for every mutation sequence, but here we only arrive at a linear ordering that depends on the mutation sequence.

\iffalse
\begin{Cor} \label{thm-linear-ordering1}
    For each mutation sequence $\bw$ preserving forks, there exists a linear ordering such that the magnitudes of the $l$-vector entries are equal to the magnitudes of the corresponding $c$-vector entries.
\end{Cor}
\fi

%\beb ANALOG OF COROLLARY 1.7 IN \cite{seven_cluster_2015}. \eb

\section{Preliminaries} \label{sec-preliminaries}

\subsection{$C$-matrices} Let $n$ be a positive integer. If $B = [b_{ij}]$ is an $n\times n$ skew-symmetric matrix,
then $B$ is in correspondence with a quiver $Q$ on $n$ vertices: if $b_{ij} > 0$ and $i \neq j$, then $Q$ has $b_{ij}$ arrows from vertex $i$ to vertex $j$.
The statements of some theorems have been formulated in terms of $Q$; however, we prefer to work with $B$ since the description of $c$-vectors is clearer in this setting.
Also, for a nonzero vector $c=( c_1 , \dots , c_n) \in \mathbb Z^n$, we write $c >0$ if all $c_i$ are non-negative, and  $c <0$ if all $c_i$ are non-positive. %Define $|c|=( |c_1|, \dots,  |c_n| )$, and set $\alpha_1=(1,0,\dots , 0), \ \alpha_2=(0,1,0, \dots, 0), \dots, \ \alpha_n=(0, \dots, 0, 1)$.

\begin{Def}\label{def_of_mut_seq}
Assume that $M=[m_{ij}]$ is an $n \times 2n$ matrix with integer entries. Let $\mathcal I:= \{ 1, 2, \dots, n \}$ be the set of indices. For $\bw=[i_1, i_2, \dots , i_\ell]$, $i_j \in \mathcal I$, we define the matrix $M^\bw=[m_{ij}^\bw]$ inductively: the initial matrix is $M$ for $\bw=[\,]$, and assuming we have $M^\bw$, define the matrix $M^{\bw[k]}=[m_{ij}^{\bw[k]}]$ for $k \in \mathcal I$ with $\bw[k]:=[i_1, i_2, \dots , i_\ell, k]$
by  
\begin{equation} \label{eqn-mmuu} m_{ij}^{\bw[k]} = \begin{cases} -m_{ij}^\bw & \text{if  $i=k$ or $j=k$}, \\ m_{ij}^\bw + \mathrm{sgn}(m_{ik}^\bw) \, \max(m_{ik}^\bw m_{kj}^\bw,0) & \text{otherwise}, \end{cases}
\end{equation}
where  $\mathrm{sgn}(a) \in \{1, 0, -1\}$ is the signature of $a$. The matrix $M^{\bw[k]}$ is called the {\em mutation of $M^\bw$} at index (or label) $k$, $\bw$ and $\bw[k]$ are called {\em mutation sequences}, and $n$ is the {\em rank}.
If the mutation sequence $[i_i, i_2, \cdots , i_\ell]$ satisfies $i_j\neq i_{j+1}$ for all $j\in\{1,...,\ell-1\}$, then it is said to be \emph{reduced}.
\end{Def}

Let $B$ be a $n\times n$ skew-symmetric matrix. 
Consider the $n\times 2n$ matrix $\begin{bmatrix}B&I\end{bmatrix}$ and
 a mutation sequence $\bw=[i_1, \dots ,i_\ell]$. After the mutations at the indices $i_1, \dots , i_\ell$ consecutively, we obtain $\begin{bmatrix}B^\bw&C^\bw\end{bmatrix}$. Write their entries as 
\begin{equation} \label{eqn-bcbc} B^\bw= \begin{bmatrix} b_{ij}^\bw \end{bmatrix}, \qquad
C^\bw= \begin{bmatrix} c_{ij}^\bw \end{bmatrix} = \begin{bmatrix} c_1^\bw \\ \vdots \\ c_n^\bw \end{bmatrix},\end{equation}
where $c_i^\bw$ are the row vectors.
\begin{Def}\label{C_sign}
The matrix $C^\bw$ is called a $C$-matrix of $B$ for any $w$.\footnote{This is different from the original definition by Fomin and Zelevinsky}
The row vectors $c_i^\bw$ are called {\em $c$-vectors} of $B$ for any $i$ and $\bw$. 
Each non-zero entry of $c_i^\bw$ will share the same sign \cite{derksen_quivers_2008}, allowing us to define the \emph{sign-vector} of $C^\bw$, where the $i$-th entry is 1 if $c_i^\bw \geq 0$ and $-1$ if $c_i^\bw \leq 0$.
\end{Def}

\begin{Def}%\cite{bang-jensen_classes_2018}
\label{def-acyclic-ordering} 
    Let $Q$ be a quiver on $n$ vertices.
    Then we say that an ordering $v_1 \prec v_2 \prec \dots \prec v_n$ on the vertices is acyclic if whenever there exists an arrow $v_i \to v_j$ then $v_i \prec v_j$. 
\end{Def}

\subsection{Reflections and $L$-matrices}

\begin{Def} \label{def-gim-and-coxeter}
    A {\em generalized intersection matrix} (GIM) is a square matrix ${A}=[a_{ij}]$ with integral entries such that
    (1) for diagonal entries, $a_{ii}=2$;
    (2) $a_{ij}>0$ if and only if $a_{ji}>0$;
    (3) $a_{ij}<0$ if and only if $a_{ji}<0$.
    
    Let $\mathcal A$ be the (unital) $\mathbb Z$-algebra generated by $s_i, e_i$, $i=1,2, \dots, n$, subject to the following relations:
    $$ s_i^2=1, \quad \sum_{i=1}^n e_i =1, \quad s_ie_i = -e_i, \quad e_is_j=  \begin{cases} s_i+e_i-1 &\text{if } i =j, \\ e_i &\text{if } i \neq j, \end{cases}  \quad e_ie_j=  \begin{cases} e_i &\text{if } i =j, \\ 0 & \text{if } i \neq j. \end{cases}$$
    Let $\mathcal W$ be the subgroup of the units of $\mathcal{A}$ generated by $s_i$, $i=1, \dots, n$. Note that $\mathcal W$ is (isomorphic to) the universal Coxeter group. %Thus the algebra $\mathcal A$ can be considered as the algebra generated by the reflections and projections (?) of the universal Coxeter group.  
    An element $\mathbf{r}\in \mathcal W$ is called a reflection if $\mathbf{r}^2=1$. Let $\mathfrak{R}\subset \mathcal W$ be the set of reflections.
\end{Def}

From now on, let ${A}=[a_{ij}]$ be an $n\times n$ symmetric GIM.
Let $\Gamma = \sum_{i=1}^n \mathbb{Z}\alpha_i$ be the lattice generated by the formal symbols $\alpha_1,...,\alpha_n$. Define a representation $\pi:\mathcal A \rightarrow \mathrm{End}(\Gamma)$ by
\[ \pi(s_i)(\alpha_j) = \alpha_j - a_{ji} \alpha_i \quad \text{ and } \quad \pi(e_i)(\alpha_j) =\delta_{ij} \alpha_i , \quad\text{ for }i,j\in\{1,...,n\}. \] We suppress $\pi$ when we write the action of an element of $\mathcal A$ on $\Gamma$.

Given a skew-symmetric matrix $B$, for  each  linear ordering $\prec$ on $\{1,...,n\}$, we  define the associated GIM ${A}=[a_{ij}]$ by
\begin{equation} \label{eqn-gim1}
a_{ij}= \begin{cases}  b_{ij}  & \text{ if } i \prec j , \\
2 & \text{ if } i =j, \\ -b_{ij} & \text{ if } i \succ j . \end{cases}\ 
\end{equation}
An ordering $\prec$ provides a certain way for us to regard the skew-symmetric matrix $B$ as acyclic even when it is not. 

\begin{Def} \label{def-r}
When $\bw=[\,]$, we let $\mathbf{r}_i = s_i\in \mathfrak{R}$ for each  $i\in\{1,...,n\}$. For each  mutation sequence $\bw$ and each $i\in\{1,...,n\}$, define $\mathbf{r}_i^\bw \in \mathfrak{R}$ inductively as follows: 
\begin{equation} \label{def-sx_i-1} \mathbf{r}_i^{\bw [k]}= \begin{cases} \mathbf{r}_k^\bw \mathbf{r}_i^\bw  \mathbf{r}_k^\bw & \text{ if } \ b_{ik}^\bw c_{k}^\bw>0, \\ \mathbf{r}_i^\bw & \text{ otherwise.} \end{cases} \end{equation}
Clearly, each $\mathbf{r}_i^\bw$ is written in the form
\[ \mathbf{r}_i^\bw = g_i^\bw s_i {(g_i^\bw)}^{-1}, \quad g_i^\bw \in \mathcal W, \quad i\in\{1,...,n\}.\]
\end{Def}

\begin{Def}\label{def-ell}
Let $\sgn=\{1,-1\}$ be the group of order 2, and
consider the natural group action $\sgn\times \mathbb{Z}^n\longrightarrow \mathbb{Z}^n$, where we identify $\Gamma$ with $\mathbb Z^n$. 
Choose an ordering $\prec$ on $\{1,...,n\}$ to fix a GIM ${A}$, and define
\[   l_i^\bw = g_i^\bw (\alpha_i) \in \mathbb Z^n/\sgn, \qquad i\in\{1,...,n\}, \]
where we set $\alpha_1=(1,0,...,0), ..., \ \alpha_n=(0,..., 0, 1)$. Then the {\em $L$-matrix} $L^{\bw}$ associated to ${A}$ is defined to be the $n \times n$ matrix whose $i^\text{th}$ row is $l_i^\bw$ for $i\in\{1,...,n\}$, i.e., $L^{\bw} =  \begin{bmatrix} l_1^{\bw}  \\  \vdots \\ l_n^{\bw} \end{bmatrix}$, and the vectors $l_i^{\bw}$ are called the {\em $l$-vectors of ${A}$}. Note that the $L$-matrix and $l$-vectors associated to a GIM ${A}$ implicitly depend on the representation $\pi$ which is suppressed from the notation. 
\end{Def}

\subsection{Geometry of reflections}
Here we review the definition of admissible curves \cite{lee_correspondence_2021,lee_geometric_2023}. 

Let $Q$ be a fork with $n$ vertices labeled by $I:=\{1,...,n\}$ and point of return $r$. 
Let $\sigma$ be the linear ordering given by $r \prec a_{n-1} \prec a_{n-2} \prec \dots \prec a_1$, where $a_1, a_2, \dots, a_{n-1}$ are the vertices of $Q \setminus \{r\}$ and $a_i \prec a_{i-1}$ if and only if there is an arrow from $a_{i-1}$ to $a_i$. Then $\sigma(1)=r$ and $\sigma(i)=a_{n-i+1}$ 
for $i\in\{2,...,n\}$.

We define a labeled Riemann surface  $\Sigma_{\sigma}$ as follows.\footnote{The punctured discs appeared in Bessis' work \cite{bessis_dual_2006}. For better visualization, here we prefer to use an alternative description using compact Riemann surfaces with one or two marked points.}
Let $G_1$ and $G_2$ be two identical copies of a regular $n$-gon. Label the edges of each of the two  $n$-gons
 by $T_{\sigma(1)}, T_{\sigma(2)}, \dots , T_{\sigma(n)}$ counter-clockwise. On $G_i$, let $L_i$ be    the line segment from the center of $G_i$ to to the common endpoint of $T_{\sigma(n)}$ and $T_{\sigma(1)}$. Fix the orientation of every edge of $G_1$ (resp.  $G_2$) to be 
 counter-clockwise (resp. clockwise) as in the following picture. 
 \begin{center}
 \begin{tikzpicture}[scale=0.5]
\node at (2.4,-2.2){\tiny{$\sigma(n)$}};
\node at (1.5,2.6){\tiny{$\sigma(2)$}};
\node at (3.8,0){\tiny{$\sigma(1)$}};
\node at (-2.0,-2.7){\tiny{$\sigma(n-1)$}};
\node at (-2.0,2.5){\tiny{$\sigma(3)$}};
\node at (-3.2,0){\vdots};
\draw (0,0) +(30:3cm) -- +(90:3cm) -- +(150:3cm) -- +(210:3cm) --
+(270:3cm) -- +(330:3cm) -- cycle;
\draw [thick] (2.4,-0.2) -- (2.6,0)--(2.8,-0.2);
\draw [thick] (1.4,1.95) -- (1.3,2.25)--(1.6,2.25);  %arrow for sigma(2)
\draw [thick] (-1.0,2.2) -- (-1.3,2.2)--(-1.2,2.5);  %arrow for sigma(3)
\draw [thick] (-2.4,0.2) -- (-2.6,0)--(-2.8,0.2);   %arrow for ...
\draw [thick] (1.0,-2.2) -- (1.3,-2.2)--(1.2,-2.5);  %arrow for sigma(N)
\draw [thick] (-1.4,-1.95) -- (-1.3,-2.25)--(-1.6,-2.25);  %arrow for sigma(N-1)
%\draw [thick] (0,0)--(2.6,-1.5);  %L_1
%\node at (1.3,-1.1){\tiny{$L_1$}};
\end{tikzpicture}
\begin{tikzpicture}[scale=0.5]
%\node at (-1.3,1.2){\tiny{$L_2$}};
%\draw [thick] (0,0)--(-2.6,1.5);  %L_2
\node at (2.4,-2.2){\tiny{$\sigma(3)$}};
\node at (1.5,2.9){\tiny{$\sigma(n-1)$}};
\node at (3.3,0){\vdots};
\node at (-2.0,-2.7){\tiny{$\sigma(2)$}};
\node at (-2.0,2.5){\tiny{$\sigma(n)$}};
\draw (0,0) +(30:3cm) -- +(90:3cm) -- +(150:3cm) -- +(210:3cm) --
+(270:3cm) -- +(330:3cm) -- cycle;
\draw [thick] (-2.4,-0.2) -- (-2.6,0)--(-2.8,-0.2);
\draw [thick] (-1.4,1.95) -- (-1.3,2.25)--(-1.6,2.25);  %arrow for sigma(2)
\draw [thick] (1.0,2.2) -- (1.3,2.2)--(1.2,2.5);  %arrow for sigma(3)
\draw [thick] (2.4,0.2) -- (2.6,0)--(2.8,0.2);   %arrow for ...
\draw [thick] (-1.0,-2.2) -- (-1.3,-2.2)--(-1.2,-2.5);  %arrow for sigma(N)
\draw [thick] (1.4,-1.95) -- (1.3,-2.25)--(1.6,-2.25);  %arrow for sigma(N-1)
\end{tikzpicture}
 \end{center}

 Let $\Sigma_{\sigma}$ be the (compact) Riemann surface of genus $\lfloor \frac{n-1}{2}\rfloor$
obtained by gluing together the two $n$-gons with all the edges of the same label identified according 
to their orientations.  The edges of the $n$-gons become $n$ different curves in $\Sigma_\sigma$. If $n$ is odd, all the vertices of the two $n$-gons 
are identified to become one point in $\Sigma_\sigma$ and the curves obtained from the edges are loops. If $n$ is even, two distinct
 vertices are shared by all curves. Let $\mathcal{T}$ be the set of all curves, i.e., $\mathcal{T}={T}_1\cup\cdots\cup{T}_n\subset \Sigma_\sigma$, and $V$ be the set of the vertex (or vertices) on $\mathcal{T}$.  

 For example, when $n=3$, let $\Sigma_\sigma$ be the closed Riemann surface of genus $1$ with a single marked point $V$, and let $\widetilde{\Sigma_{\sigma}}$ be the universal cover of $\Sigma_\sigma$, which can be regarded as $\mathbb{R}^2$. 
Let $\alpha=\sigma(1)$, $\beta=\sigma(2)$, and $\gamma=\sigma(3)$. Fix three arcs $T_\alpha, T_\beta,$ and $T_\gamma$ on $\Sigma_\sigma$ and the projection $p:\widetilde{\Sigma_{\sigma}}\longrightarrow \Sigma_{\sigma}$ such that  $p^{-1}(T_\alpha)=\mathbb{Z}\times \mathbb{R}\subset \mathbb{R}^2$, $p^{-1}(T_\beta)=\{(x,y) \, : \, x+y\in \mathbb{Z}\}\subset \mathbb{R}^2$, $p^{-1}(T_\gamma)=\mathbb{R}\times \mathbb{Z}\subset \mathbb{R}^2$, and $p^{-1}(V)=\mathbb{Z}^2\subset \mathbb{R}^2$.
Hence $T_\alpha$ is the vertical line segment, $T_\beta$ is the diagonal, and $T_\gamma$ is the horizontal line segment. 
Let ${T}={T}_1\cup T_2 \cup T_3$. See Figure 1.

Let $\mathfrak W$ be the set of words $\mathfrak w=i_1i_2 \cdots i_k$ from the alphabet $I$ such that no two consecutive letters $i_p$ and $i_{p+1}$ are the same.   For each element $w\in W$, let $R_w\subset \mathfrak W$ be the set of words $i_1i_2 \cdots i_k$ such that $w=s_{i_1}s_{i_2} \cdots s_{i_k}$. Recall that the set of positive real roots and the set of reflections in $\mathcal{W}$ are in one-to-one correspondence. Also note that if ${Q}$ is abundunt, i.e., $\mathcal{W}=\langle s_1,...,s_N\ : \ s_1^2=\cdots=s_N^2=e \rangle$, then there is a unique expression for every element $w\in \mathcal{W}$ 
as a product of simple reflections (with no two consecutive simple reflections being the same), hence $R_w$ contains a unique element.  
Note that $$
\mathfrak{R}=\bigcup_{\tiny{\begin{array}{c}w\in \mathcal{W}\\w:\text{reflection}\end{array}}}R_w\subset \mathfrak W.
$$  

\begin{Def} \label{def-adm} 
An \emph{admissible} curve is a continuous function $\eta:[0,1]\longrightarrow \Sigma_{\sigma}$ such that

1) $\eta(x)\in V$ if and only if  $x\in\{0,1\}$;

2) there exists $\epsilon>0$ such that $\eta([0,\epsilon])\subset L_1$ and $\eta([1-\epsilon,1])\subset L_2$;

3) if $\eta(x)\in \mathcal{T}\setminus V$ then $\eta([x-\epsilon,x+\epsilon])$ meets $\mathcal{T}$ transversally for sufficiently small $\epsilon>0$;

4) and   $\upsilon(\eta)\in \mathfrak{R}$, where $\upsilon(\eta):={i_1}\cdots {i_k}$ is given by 
$$\{x\in(0,1) \ : \ \eta(x)\in \mathcal{T}\}=\{x_1<\cdots<x_k\}\quad \text{ and }\quad \eta(x_\ell)\in T_{i_\ell}\text{ for }\ell\in\{1,...,k\}.$$ 

\end{Def}

%For simplicity, here we give a precise definition of an admissible curve for rank 3 quivers only, but it is straightforward to generalize to quivers of higher rank.
%For our geometric model on rank 3 quivers, we consider the (triangulated) torus with one marked point along with admissible curves (see Definition \ref{def-adm}). The key point here is that there is  a map from the set of admissible curves to $\mathfrak R$.

\begin{figure}\label{Figure-1}
\centering
\begin{tikzpicture}[scale=0.7mm]
\draw [help lines] (0,0) grid (3,2);
\draw [help lines] (0,1)--(1,0);
\draw [help lines] (0,2)--(2,0);
\draw [help lines] (3,0)--(1,2);
\draw [help lines] (3,1)--(2,2);
\foreach \i in {0, ..., 3}
    \foreach \j in {0, ..., 2}{
        \draw (\i,\j)node{$\bullet$};}
\draw[thick, red] (1,1)node[below left]{$V$};
%\draw [thick, green] (3.5-1, 2.5-1)node{$\bullet$}; %R1
%\draw [thick, blue] (2.1,0.95) -- (5.9,4.05); %R3
%\draw [thin, blue] (1,0) -- (7,5); %R3
%\draw [thick, red] (2.1+1+3,1.85+2) -- (4.9+1+3,4.15+2); %R2
%\draw [thick,red] (1,1)--(1.1,1.1);  %endpoint
%\draw [thick,red] (6,4)--(5.9,3.9);  %endpoint
%\draw [thick,red] (1.36,1.22)--(6-0.36,4-0.22);  %mid-line
%\draw[thick,red,scale=0.5,domain=0.75:13.12,smooth,variable=\t]
%plot ({(0.2+0.05*\t)*cos(\t r)+2},{(0.2+0.05*\t)*sin(\t r)+2});
%\draw[thick,red,scale=0.5,domain=3.89:16.26,smooth,variable=\t]
%plot ({(0.03+0.05*\t)*cos(\t r)+12},{(0.03+0.05*\t)*sin(\t r)+8});
\draw [thick, red] (1.5,1.1)node{$T_\gamma$};
\draw [thick, red] (1.6,1.6)node{$T_\beta$};
\draw [thick, red] (1.1,1.5)node{$T_\alpha$};
\end{tikzpicture}
\caption{This picture illustrates a portion of the universal cover $\Sigma_\sigma$, and the three arcs $T_\alpha, T_\beta,$ and $T_\gamma$.}
\end{figure}
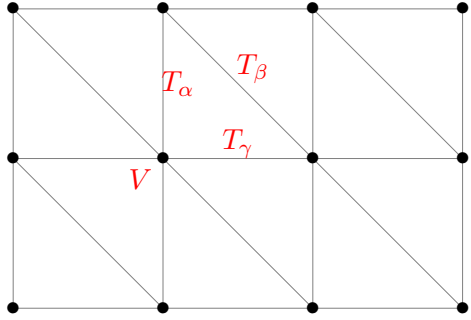

%\begin{Def} 
%Define the function $\mathfrak{z}: T\setminus\{V\} \longrightarrow \{s_i : i \in [1,3]\}$ as follows: if $x\in T_i$ then $\mathfrak{z}(x)=s_i$.
%\end{Def}

\subsection{Forks and Admissible GIM's} \label{sub-sec-forks-admissible}

\begin{Def}{\cite[Definition 2.10]{seven_cluster_2010}}
    Let $A$ be a GIM such that $|a_{ij}| = |b_{ij}|$ for all $i \neq j$.
%Let $S(B) = b_{ij}/|b_{ij}|$ and .  
    Then $A$ is admissible if it satisfies the following sign condition:
    for any chordless cycle $Z$ in the corresponding quiver $Q$ to $B$, the product $\prod_{(i,j) \in Z} (-a_{ij})$ over the edges of $Z$ is negative if the cycle is oriented and positive otherwise.
\end{Def}

\begin{Def}{\cite[Definition 6.2]{fomin_long_2023}} \label{def-vortex}
    A \textit{vortex} is a rank 4 quiver $Q$ such that
    \begin{itemize}
        \item The underlying simple graph of $Q$ is complete

        \item One of the vertices of $Q$ is a source or sink, called the apex

        \item The quiver $Q$ has an oriented cycle of length 3 (also known as a 3-cycle) as a subquiver.
    \end{itemize}
    For example, all vortices look like 
    $$\begin{tikzcd}
        & j \\
        & \ell \\
        i & & k
        \arrow[from=3-1,to=1-2]
        \arrow[from=1-2,to=3-3]
        \arrow[from=3-3,to=3-1]
        \arrow[from=3-1,to=2-2]
        \arrow[from=3-3,to=2-2]
        \arrow[from=1-2,to=2-2]
    \end{tikzcd} 
    \text{ or }
    \begin{tikzcd}
        & j \\
        & \ell \\
        i & & k
        \arrow[from=3-1,to=1-2]
        \arrow[from=1-2,to=3-3]
        \arrow[from=3-3,to=3-1]
        \arrow[from=2-2,to=3-1]
        \arrow[from=2-2,to=3-3]
        \arrow[from=2-2,to=1-2]
    \end{tikzcd} $$
    We say that a quiver is \textit{vortex-free} if it does not have a vortex as a subquiver.
    Note that no quiver with a vortex may admit an admissible GIM.
\end{Def}

\begin{Lem} \label{lem-fork-vortices}
    Let $F$ be a fork with point of return $r$.
    Then $F$ is vortex-free.
\end{Lem}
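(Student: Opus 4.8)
The plan is to reduce everything to a classification of the oriented $3$-cycles in a fork. Since $F$ is abundant, every pair of vertices is joined by at least two arrows, so every vertex other than $r$ lies in exactly one of $F^{-}(r)$ or $F^{+}(r)$; thus $F = \{r\} \sqcup F^{-}(r) \sqcup F^{+}(r)$. I would first record the three orientation features that the definition of a fork forces: each $i \in F^{-}(r)$ has an arrow $i \to r$, each $j \in F^{+}(r)$ has an arrow $r \to j$, and for $i \in F^{-}(r)$, $j \in F^{+}(r)$ the inequality $f_{ji} > f_{ir} \geq 1$ guarantees a net arrow $j \to i$, i.e. every arrow between $F^{+}(r)$ and $F^{-}(r)$ points from the former into the latter.

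Using this, I would prove the key claim: \emph{every oriented $3$-cycle of $F$ contains $r$, and has the form $u \to r \to v \to u$ with $u \in F^{-}(r)$ and $v \in F^{+}(r)$.} A triple disjoint from $r$ lies in $F^{-}(r) \sqcup F^{+}(r)$; it cannot lie entirely in $F^{-}(r)$ or entirely in $F^{+}(r)$, since those induced subquivers are acyclic, and in the remaining $2$--$1$ split the lone vertex is a source (if it is the $F^{+}(r)$ vertex) or a sink (if it is the $F^{-}(r)$ vertex) by the arrow rule just noted, so no oriented cycle can occur. For a triple $\{r,u,v\}$ one checks the same way that an oriented cycle exists only when exactly one of $u,v$ lies in $F^{-}(r)$ and the other in $F^{+}(r)$: if both lie in $F^{-}(r)$ then $r$ is a sink of the triple, if both lie in $F^{+}(r)$ then $r$ is a source, and in the mixed case $u \to r \to v \to u$ is forced.

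With the claim in hand the lemma is quick. Suppose toward a contradiction that $F$ contains a vortex on four vertices with apex $p$. As a source or sink of the vortex, $p$ lies on no oriented cycle, so the vortex's oriented $3$-cycle uses the three non-apex vertices; by the claim this cycle contains $r$, whence $p \neq r$ and hence $p \in F^{-}(r) \sqcup F^{+}(r)$, while the cycle is $u \to r \to v \to u$ with $u \in F^{-}(r)$, $v \in F^{+}(r)$. Since the vortex graph is complete, $p$ is adjacent to each of $r,u,v$, and I would finish with a short case analysis. If $p \in F^{-}(r)$ then $p \to r$, which contradicts $p$ being a sink, while if $p$ is a source then $p \to v$ contradicts the forced arrow $v \to p$; the case $p \in F^{+}(r)$ is symmetric, using $r \to p$ against $p$ being a source and $p \to u$ against the forced $p \to u$ rule. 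Each branch contradicts either the definition of $F^{\pm}(r)$ or the $F^{+}(r) \to F^{-}(r)$ arrow rule, completing the proof.

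The only real content is the $3$-cycle classification; once that is established, the vortex obstruction is essentially forced by bookkeeping. The main thing to be careful about is the orientation conventions---keeping straight that arrows run from $F^{+}(r)$ into $F^{-}(r)$, and that a source or sink cannot sit on an oriented cycle---since a sign slip there would collapse the argument.
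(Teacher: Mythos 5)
Your proof is correct and takes essentially the same route as the paper's: both arguments show that the vortex's oriented $3$-cycle must pass through $r$ (in the mixed form $u \to r \to v \to u$ with $u \in F^{-}(r)$, $v \in F^{+}(r)$) and then contradict the fork's rule that all arrows between $F^{+}(r)$ and $F^{-}(r)$ point into $F^{-}(r)$; yours merely isolates the $3$-cycle classification as an explicit intermediate claim, where the paper leaves it implicit. One small slip in wording: in the case $p \in F^{+}(r)$ with $p$ a sink, the contradiction should read ``$u \to p$ against the forced arrow $p \to u$,'' not ``$p \to u$ against the forced $p \to u$ rule.''
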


\begin{proof}
    Suppose that $F$ is not vortex free. Then we may assume that there is some subquiver of $F$ that looks like
    $$\begin{tikzcd}
        & j \\
        & \ell \\
        i & & k
        \arrow[from=3-1,to=1-2]
        \arrow[from=1-2,to=3-3]
        \arrow[from=3-3,to=3-1]
        \arrow[from=3-1,to=2-2]
        \arrow[from=3-3,to=2-2]
        \arrow[from=1-2,to=2-2]
    \end{tikzcd} $$
    This forces $r \in \{i,j,k\}$.
    However, this would produce vertices in $b \in F^+(r) \cap \{i,j,k, \ell\}$ and $a \in F^-(r) \cap \{i,j,k,\ell\}$ such that $a \to r \to b$ but $a \to b$. 
    A similar argument holds if $\ell$ is a source instead of a sink in the subquiver induced by $\{i,j,k,\ell\}$.
    Therefore, every fork $F$ is vortex-free.
\end{proof}

\begin{Def}{\cite[Definition 1.6]{seven_cluster_2014}} \label{def-GIM-mutation}
    If $A$ is a GIM corresponding to $B$, then the mutation of $A$ at a vertex $k$ is given by
    \[\mu_k(a_{ij}) = \begin{cases}
        \epsilon \sgn(b_{ik})a_{ik}, & \text{ if } j = k \neq i; \\
        -\epsilon \sgn(b_{kj})a_{kj}, & \text{ if } i = k \neq j; \\
        a_{ij} - \sgn(a_{ik}a_{kj})\max(b_{ik}b_{kj},0), & \text{ otherwise},
    \end{cases}\]
    where $\epsilon = -\sgn(c_k)$.
    This definition is equivalent to
    \[\mu_k(a_{ij}) = \begin{cases}
        -\sgn(b_{ik}c_k)a_{ik}, & \text{if } j = k \neq i; \\
        -\sgn(b_{jk}c_k)a_{kj}, & \text{if } i = k \neq j; \\
        a_{ij} - a_{ik}a_{kj}, & \text{if } b_{ik}b_{kj} > 0;\\
        a_{ij}, & \text{if } b_{ik}b_{kj} \leq 0.
    \end{cases}\]
    If $A$ is admissible, then $\mu_k(A)$ is a GIM corresponding to $\mu_k(B)$ \cite[Proposition 2.2]{seven_cluster_2014}.
\end{Def}

\section{Generalized intersection matrices for mutated quivers}

In this section, we will prove Proposition~\ref{lem-fork-admissible-quadratic-form}. In doing so, we will need several lemmas.

\begin{Lem} \label{lem-fork-admissible-ordering}
    Every fork admits at least one ordering such that the corresponding GIM is admissible.
\end{Lem}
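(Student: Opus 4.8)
The plan is to reduce Seven's admissibility condition to a statement about triangles and then verify it for one explicit ordering. Since a fork $F$ is abundant, its underlying simple graph is the complete graph on the $n$ vertices; consequently every chordless cycle of $F$ has length three, so it suffices to check the sign condition on each triangle of $F$. Before the case analysis I would record the structural fact that the full subquiver on $F \setminus \{r\}$ is acyclic: the defining inequalities of a fork give $f_{ji} > f_{ir} \geq 1$ for every $i \in F^{-}(r)$ and $j \in F^{+}(r)$, so every arrow between $F^{-}(r)$ and $F^{+}(r)$ is directed from $F^{+}(r)$ to $F^{-}(r)$; together with the acyclicity of the subquivers induced by $F^{-}(r)$ and $F^{+}(r)$, this rules out any oriented cycle avoiding $r$.

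Next I would fix $\prec$ to be the ordering that places $r$ as the unique minimum and orders $F \setminus \{r\}$ reverse-acyclically, so that every arrow of $F \setminus \{r\}$ runs from its $\prec$-larger to its $\prec$-smaller endpoint (this is the ordering $\sigma$ used elsewhere in the paper, and it exists precisely because $F\setminus\{r\}$ is acyclic). For the GIM $A$ associated to $\prec$ by \eqref{eqn-gim1} one has $a_{ij} = a_{ji} = b_{ij}$ whenever $i \prec j$, so $A$ is symmetric and each factor $-a_{ij}$ of the product equals $-b_{ij}$ with $i \prec j$. Calling an edge of a triangle \emph{forward} when its arrow points from the $\prec$-smaller to the $\prec$-larger vertex, the sign of $\prod_{(i,j)\in Z}(-a_{ij})$ over a triangle $Z$ is exactly $(-1)^{f}$, where $f$ is the number of forward edges of $Z$. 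Thus admissibility becomes the purely combinatorial requirement that $f$ be odd on oriented triangles and even on transitive (non-oriented) triangles.

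I would then run the case analysis over the four types of triangle. A triangle contained in $F \setminus \{r\}$ is transitive and, under the reverse-acyclic ordering, has all three edges backward, so $f = 0$. A triangle $\{r, a_1, a_2\}$ with $a_1, a_2 \in F^{-}(r)$ has $r$ as a sink and again yields $f = 0$; a triangle $\{r, b_1, b_2\}$ with $b_1, b_2 \in F^{+}(r)$ has $r$ as a source and yields $f = 2$; both are transitive with even $f$. Finally, a triangle $\{r, a, b\}$ with $a \in F^{-}(r)$ and $b \in F^{+}(r)$ is the oriented cycle $r \to b \to a \to r$; since $r$ is the global minimum and the cross arrow $b \to a$ forces $a \prec b$, the order is $r \prec a \prec b$, the single forward edge is $r \to b$, and $f = 1$ is odd. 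Hence the sign condition holds on every triangle and $A$ is admissible.

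The main obstacle is the sign bookkeeping in the second step: one must pin down the exact correspondence between the cyclic orientation of an oriented triangle (resp. the placement of the middle vertex of a transitive triangle) and the parity of $f$, and in particular recognize that it is the reverse-acyclic ordering, rather than the naive acyclic one, that produces the correct parities — the forward acyclic ordering would make every transitive triangle fail the condition. Once this dictionary between orientation type and forward-edge parity is in place, the four cases reduce to short verifications driven entirely by $r$ being the minimum and by all cross arrows pointing from $F^{+}(r)$ into $F^{-}(r)$.
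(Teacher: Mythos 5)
Your proof is correct and takes essentially the same route as the paper: you choose the same ordering ($r$ minimal, followed by the reverse of the acyclic ordering on $F\setminus\{r\}$, which is exactly the ordering of Corollary~\ref{cor-fork-admissible-ordering}), reduce admissibility to a sign check on triangles, and run the same four-case analysis according to whether the triangle avoids $r$ or meets it with the other two vertices in $F^-(r)$, in $F^+(r)$, or one in each. The differences are only presentational: your forward-edge parity bookkeeping replaces the paper's explicit products of $\pm|f_{ij}|$, and you spell out two facts the paper leaves implicit, namely that $F\setminus\{r\}$ is acyclic and that abundance forces every chordless cycle to be a triangle.
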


\begin{proof}
    Let $F$ be a fork with point of return $r$.
    Then $F \setminus \{r\}$ is an acyclic quiver by definition.
    As such, the GIM $A$ corresponding to $F \setminus \{r\}$ where each entry $a_{ij} = - |f_{ij}|$ is admissible.
    Now, construct the matrix $A$ where $a_{kk} = 2$, $a_{rj} = |f_{rj}|$ for all $j \in F^+(r)$, $a_{ir} = -|f_{ir}|$ for all $i \in F^-(r)$, and $a_{ij} = -|f_{ij}|$ for all $i,j \neq r$.
    The vertices ${i,j,k}$ form a triangle in $F$. 
    Then either it is an undirected triangle or it is directed with $r \in \{i,j,k\}$.
    If it is undirected, then $(-a_{ij})(-a_{jk})(-a_{ki}) = |f_{ij}||f_{jk}||f_{ki}| > 0$ whenever $r \notin \{i,j,k\}$, which makes it positive.
    If $r \in \{i,j,k\}$ and the cycle is undirected, assume without loss of generality that $r = k$.
    Then $i,j \in F^{-}(r)$ or $i,j \in F^{+}(r)$ from the definition of a fork.
    This forces $(-a_{ij})(-a_{jk})(-a_{ki}) = |f_{ij}||f_{jk}||f_{ki}|$ or $(-a_{ij})(-a_{jk})(-a_{ki}) = |f_{ij}|(-|f_{jk}|)(-|f_{ki}|) = |f_{ij}||f_{jk}||f_{ki}|$ respectively.
    In either case, the product is positive as desired.
    
    If the vertices ${i,j,k}$ form a triangle in $F$, we may assume that $k = r$, $i \to r$, and $r \to j$ without loss of generality.
    Then $(-a_{ij})(-a_{rj})(-a_{ir}) = |f_{ij}|(-|f_{rj}|)|f_{ir}| < 0$.
    Thus every chordless cycle is admissible, and this forces $A$ to be an admissible GIM.
\end{proof}

\begin{Cor} \label{cor-fork-admissible-ordering}
    Let $F$ be a fork with $n$ vertices and point of return $r$.
    If $a_1 \prec a_2 \prec \dots \prec a_{n-1}$ is the acyclic ordering  corresponding to $F \setminus \{r\}$, then $r \prec a_{n-1} \prec a_{n-2} \prec \dots  \prec a_{1}$ is the linear ordering which produces the GIM $A$ in Lemma \ref{lem-fork-admissible-ordering}.
    Naturally, any cyclic permutation of this linear ordering also produces a GIM that is admissible.
\end{Cor}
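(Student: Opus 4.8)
The plan is to prove the two assertions separately. The first—that the ordering $r \prec a_{n-1} \prec \cdots \prec a_1$ reproduces the admissible GIM $A$ built in Lemma~\ref{lem-fork-admissible-ordering}—I would establish by a direct comparison of the formula \eqref{eqn-gim1} with the explicit entries written down in that proof. The only subtlety is sign bookkeeping: for a pair $\{i,j\}$ the formula \eqref{eqn-gim1} yields $a_{ij}=a_{ji}$, whose common sign is positive exactly when the arrow between $i$ and $j$ runs from the $\prec$-smaller vertex to the $\prec$-larger one and negative otherwise, while in all cases $|a_{ij}|=|b_{ij}|=|f_{ij}|$. In particular \eqref{eqn-gim1} always produces a (symmetric) GIM with $|a_{ij}|=|b_{ij}|$ for every ordering, so the genuine content of admissibility is only the chordless-cycle sign condition.

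Granting the sign rule, I would check the three types of pairs. Since $r$ is the $\prec$-minimum here, for $j\in F^{+}(r)$ the arrow $r\to j$ runs forward, so $a_{rj}=|f_{rj}|>0$; for $i\in F^{-}(r)$ the arrow $i\to r$ runs backward, so $a_{ir}=-|f_{ir}|<0$. For a pair inside $F\setminus\{r\}$, the acyclic ordering of Definition~\ref{def-acyclic-ordering} sends every arrow from the smaller label to the larger one, but the stated ordering reverses $a_1\prec\cdots\prec a_{n-1}$ to $a_{n-1}\prec\cdots\prec a_1$, so each such arrow now runs backward and the corresponding entry equals $-|f_{ij}|<0$. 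These agree precisely with the entries of $A$ in Lemma~\ref{lem-fork-admissible-ordering}, so the two GIMs coincide and the first assertion follows.

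For the second assertion I would use that admissibility depends, for each chordless cycle $Z$, only on the sign of $\prod_{(i,j)\in Z}(-a_{ij})$ together with whether $Z$ is oriented, and the orientation type of $Z$ is a property of $B$ alone, unchanged by reordering the vertices. Hence it suffices to show that a cyclic permutation of the ordering preserves $\sgn\prod_{(i,j)\in Z}(-a_{ij})$ for every chordless cycle $Z$. I would reduce to a single elementary cyclic shift, which moves the current $\prec$-maximal vertex $w$ to the $\prec$-minimal position; composing such shifts yields every cyclic permutation. By the sign rule above, this shift reverses the relative order of $w$ and each of its neighbors, hence flips the sign of $a_e$ for exactly those edges $e$ incident to $w$ and fixes all other edge-signs. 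In any chordless cycle $Z$ the vertex $w$ has degree $0$ or $2$, so an even number of factors $-a_{ij}$ change sign; therefore $\sgn\prod_{(i,j)\in Z}(-a_{ij})$ is invariant, the sign condition is preserved, and each resulting GIM is admissible.

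The first assertion is routine bookkeeping once the sign rule is isolated. The step needing the most care is the parity argument for cyclic permutations: one must confirm that an elementary shift flips precisely the edge-signs at the moved vertex, and that its degree being $0$ or $2$ inside any chordless cycle forces an even number of sign changes. This is the main obstacle, though it dissolves once the problem is phrased as invariance of the chordless-cycle sign product under a single cyclic shift.
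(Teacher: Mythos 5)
Your proposal is correct and follows what the paper intends: the paper states this corollary without proof, and your entry-by-entry comparison of formula \eqref{eqn-gim1} (via the sign rule that $a_{ij}=a_{ji}$ is positive exactly when the arrow runs from the $\prec$-smaller to the $\prec$-larger vertex) against the explicit matrix built in Lemma~\ref{lem-fork-admissible-ordering} is precisely the intended bookkeeping for the first claim. For the second claim, which the paper dismisses with ``naturally,'' your parity argument is a clean and valid justification: an elementary cyclic shift flips the signs of exactly the entries at the moved vertex, and since any chordless cycle contains either $0$ or $2$ edges incident to that vertex, the product $\prod_{(i,j)\in Z}(-a_{ij})$ and the orientation type of $Z$ are both unchanged, so admissibility is preserved.
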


\begin{Rmk} \label{rmk-fork-admissible-ordering}
    The ordering in Corollary \ref{cor-fork-admissible-ordering} is essentially the reverse of the \textit{proper cyclical ordering} of a fork given in \cite{fomin2024cyclicallyorderedquivers}.
\end{Rmk}

%\KL{What goes wrong with the rank 4 counterexample?}

\begin{Lem} \label{lem-admissible-complete}
    Let $A$ be an admissible GIM for a quiver $B$ that is complete and vortex-free.
    If $\mu_k(B)$ is both complete and vortex-free, then $\mu_k(A)$ is an admissible GIM of $\mu_k(B)$.
\end{Lem}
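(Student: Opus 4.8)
The plan is to exploit completeness to reduce the admissibility condition to triangles and then localize. Because $B$ and $\mu_k(B)$ are complete, every chordless cycle in either quiver is a triangle: any cycle of length $\geq 4$ in a complete graph has a chord. By \cite[Proposition 2.2]{seven_cluster_2014} we already know that $\mu_k(A)$ is a GIM corresponding to $\mu_k(B)$, so the only thing left to verify is the admissibility sign condition on each triangle $T$ of $\mu_k(B)$, namely that the signed product $\prod_{(p,q)\in T}\bigl(-\mu_k(a_{pq})\bigr)$ is negative on oriented triangles and positive otherwise. I would first record a \emph{locality principle}: inspecting Definition~\ref{def-GIM-mutation}, the entry $\mu_k(a_{pq})$ depends only on $a_{pq}, a_{pk}, a_{kq}, b_{pk}, b_{kq}$ (and on $\sgn(c_k)$ when $p=k$ or $q=k$), so the three entries attached to a triangle $T$ of $\mu_k(B)$ are determined by the restrictions $A|_{T\cup\{k\}}$ and $B|_{T\cup\{k\}}$, a subquiver on at most four vertices. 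Since an induced subquiver of a complete (resp. vortex-free) quiver is again complete (resp. vortex-free), and since $B$-mutation commutes with restriction to any vertex set containing $k$, both $B|_{T\cup\{k\}}$ and $\mu_k(B)|_{T\cup\{k\}}=\mu_k\bigl(B|_{T\cup\{k\}}\bigr)$ are complete and vortex-free. Moreover every triangle of an induced complete subquiver is literally a chordless cycle of $B$ with the same orientation type, so $A|_{T\cup\{k\}}$ inherits admissibility. Thus it suffices to prove the lemma for quivers of rank at most $4$, and in fact only in the two cases $T=\{i,j,k\}$ (rank $3$) and $T=\{i,j,\ell\}$ with $k$ the fourth vertex (rank $4$).

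Before the case analysis I would dispose of the apparent dependence on $\sgn(c_k)$. In the signed triangle product, each edge incident to $k$ contributes exactly one factor $\sgn(c_k)$ through the first two cases of Definition~\ref{def-GIM-mutation}; a triangle through $k$ has precisely two such edges and a triangle avoiding $k$ has none, so in either situation the factors $\sgn(c_k)$ occur to an even power and cancel. Consequently the sign condition on $T$ is a purely combinatorial function of $A|_{T\cup\{k\}}$ and $B|_{T\cup\{k\}}$. The rank $3$ case then reduces to writing out $\mu_k(A)$ on a single triangle $\{i,j,k\}$ and checking, according to whether that triangle is oriented, that the mutated signed product keeps the correct sign; here completeness is all that is used, and vortex-freeness is automatic since no rank $3$ quiver contains a vortex.

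The substance of the argument, and the step I expect to be the main obstacle, is the rank $4$ case, where $T=\{i,j,\ell\}$ and $k$ is the fourth vertex. Here I would enumerate the complete orientations on four vertices (the tournaments on $K_4$ together with their arrow multiplicities) that are vortex-free along with their $k$-mutations, discard those excluded by the vortex-free hypothesis on $B$ and on $\mu_k(B)$, and for each surviving configuration compute the mutated arrows among $i,j,\ell$ (which flip exactly according to which length-two paths pass through $k$) and the mutated entries $\mu_k(a_{ij}),\mu_k(a_{j\ell}),\mu_k(a_{\ell i})$ from the four admissibility relations available on the triangles of the original $4$-vertex quiver. The claim to verify is that in every admissible, vortex-free configuration the parity of the number of reversed edges of the triangle $\{i,j,\ell\}$ matches the change in the sign of the GIM product, so that the sign condition survives mutation; the vortex-free condition is precisely what removes the configurations with a source/sink apex over a $3$-cycle, which are exactly the ones where a sign mismatch would otherwise occur. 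This finite but delicate verification is the crux, and once it is completed for rank $\leq 4$ the locality principle of the first paragraph promotes it to arbitrary rank, establishing that $\mu_k(A)$ is admissible for $\mu_k(B)$.
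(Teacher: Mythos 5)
Your reduction framework is sound and in fact mirrors the paper's own structure: completeness forces every chordless cycle to be a triangle, the factors of $\sgn(c_k)$ occur an even number of times in every triangle product and cancel, and the two cases to handle are a triangle through $k$ and a triangle $\{i,j,\ell\}$ avoiding $k$ (your ``locality principle'' is a clean way of packaging this; the paper never needs it explicitly because its case analysis is already local). The problem is that you stop exactly where the proof has to start. Even the rank-$3$ case is not purely sign-theoretic: when $b_{ik}b_{kj}>0$ and the triangle is oriented, whether the mutated triangle is oriented depends on the magnitude comparison between $b_{ji}$ and $b_{ik}b_{kj}$, and one must check that $a_{ik}a_{kj}a_{ji}-a_{jk}^2a_{ki}^2$ has the matching sign in each regime using $|a_{pq}|=|b_{pq}|$, with completeness of $\mu_k(B)$ invoked to exclude $b_{ji}=b_{ik}b_{kj}$. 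You assert this check is routine but do not perform it.

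More seriously, the four-vertex case --- which you yourself call the crux --- is only described, never executed. Saying that ``the vortex-free condition is precisely what removes the configurations where a sign mismatch would otherwise occur'' is a restatement of the lemma in that case, not an argument for it. Note also that the verification is not literally finite as you frame it: the sign of a mutated edge $\mu_k(b_{ij})$, hence the orientation type of the mutated triangle $\{i,j,\ell\}$, depends on magnitude comparisons between $|b_{ij}|$ and $|b_{ik}b_{kj}|$, so one must reason symbolically rather than enumerate tournaments. The paper's actual argument here is the real content of the lemma: in the mixed-sign situation $\sgn(b_{ik})=\sgn(b_{jk})=-\sgn(b_{\ell k})$, it uses the already-settled through-$k$ case to control the sign products over the triangles $\{i,k,\ell\}$ and $\{j,k,\ell\}$, introduces a sign $\beta$ recording whether these two triangles have the same cycle type after mutation, and shows that $\beta=1$ together with $Z$ undirected forces $\{i,j,k,\ell\}$ to induce a vortex in $\mu_k(B)$ with apex $i$ or $j$ --- the contradiction that makes the sign of $-\mu_k(a_{\ell i})\mu_k(a_{ij})\mu_k(a_{j\ell})$ come out correctly. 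Without this deduction (or an honest completion of your case check), your proposal is a plan for a proof, not a proof.
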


\begin{proof}
    Assume that $\mu_k(B)$ is both complete and vortex-free.
    Let $Z$ be a chordless cycle of $Q$.
    Then $Z$ is either a directed 3-cycle or an undirected 3-cycle, as $Q$ is complete.
    Hence, either $-a_{i\ell}a_{\ell j}a_{ji} < 0$ or $-a_{ik}a_{kj}a_{ji} > 0$ respectively.
    We will first show that if $k$ is a vertex of $Z$, then the appropriate product of entries of $\mu_k(A)$ has the correct sign.
    This will allow us to show the same for the cases where $k$ is not a vertex of $Z$.
    
    Assume that $k$ is a vertex of $Z$, and let $i$ and $j$ be the remaining two vertices.
    If $b_{ik}b_{kj} < 0$, then 
    $$-[\mu_k(a_{ik}) \mu_k(a_{kj}) \mu_k(a_{j i})] = -\sgn(b_{ik}c_k) \sgn(b_{jk}c_k) a_{ik}a_{kj}a_{ji}$$
    $$= -\sgn(b_{ik}b_{jk}) a_{ik}a_{kj}a_{ji}$$
    $$= -a_{ik}a_{kj}a_{ji}.$$
    Thus $-[\mu_k(a_{ik}) \mu_k(a_{kj}) \mu_k(a_{j i})] > 0$ as $b_{ik}b_{kj} < 0$ forces $Z$ to be an undirected cycle in both $B$ and $\mu_k(B)$.

    As $b_{ik}b_{kj} \neq 0$ from $B$ being complete, we next test $b_{ik}b_{kj} > 0$.
    This forces
    $$-[\mu_k(a_{ik}) \mu_k(a_{kj}) \mu_k(a_{j i})] = -\sgn(b_{ik}c_k) \sgn(b_{jk}c_k) a_{ik}a_{kj}(a_{ji}-a_{jk}a_{ki})$$
    $$=-\sgn(b_{ik}b_{jk}) a_{ik}a_{kj}(a_{ji}-a_{jk}a_{ki})$$
    $$= a_{ik}a_{kj}a_{ji}-a_{jk}^2a_{ki}^2.$$
    If $Z$ is undirected, then the resulting cycle in $\mu_k(B)$ is directed.
    Further, we have $a_{ik}a_{kj}a_{ji}-a_{jk}^2a_{ki}^2 < 0$ from $a_{ik}a_{kj}a_{ji}<0$.
    If $Z$ is directed, then the resulting cycle in $\mu_k(B)$ is directed if $0 < b_{ji} < b_{ik}b_{kj}$ and undirected if $0 < b_{ik}b_{kj} < b_{ji}$.
    In the first case, this implies that 
    $$a_{ik}a_{kj}a_{ji}-a_{jk}^2a_{ki}^2 < a_{ik}^2a_{kj}^2 - a_{jk}^2a_{ki}^2 = 0. $$
    In the second, we have
    $$a_{ik}a_{kj}a_{ji}-a_{jk}^2a_{ki}^2 > a_{ik}^2a_{kj}^2 - a_{jk}^2a_{ki}^2 = 0.$$
    Either case gives us the appropriate sign, and we do not consider the case where $b_{ji} = b_{ik}b_{kj}$, as that would contradict $\mu_k(B)$ being complete.

    Now, assume that $k$ is not a vertex of $Z$, and let $i,j, \ell$ be the vertices of $Z$.
    If $\sgn(b_{ik}) = \sgn(b_{jk}) = \sgn(b_{\ell k})$, then mutating at $k$ does not affect the cycle $Z$ or the corresponding entries of $\mu_k(A)$.
    Hence,
    $$-[\mu_k(a_{ij}) \mu_k(a_{j \ell}) \mu_k(a_{\ell i})] =-a_{ij}a_{j \ell}a_{\ell i}$$
    as desired.
    
    Assume that $\sgn(b_{ik}) = \sgn(b_{jk}) = -\sgn(b_{\ell k})$.
    Then
    $$-[\mu_k(a_{ik}) \mu_k(a_{kj}) \mu_k(a_{j i})] = -a_{ik}a_{kj}a_{ji} > 0$$
    by our previous argument.
    Additionally, we know that
    $$\sgn(\mu_k(a_{ik}) \mu_k(a_{k \ell}) \mu_k(a_{\ell i})) = -\beta \sgn(\mu_k(a_{jk}) \mu_k(a_{k \ell}) \mu_k(a_{\ell j}))$$
    for some $\beta \in \{-1,1\}$.
    Then $\beta = -1$ implies that both $\{i,k, \ell\}$ and $\{j,k,\ell\}$ share the same cycle type in $\mu_k(B)$.
    Thus
    $$\sgn(\mu_k(a_{ik}) \mu_k(a_{k \ell}) \mu_k(a_{\ell i})\mu_k(a_{jk}) \mu_k(a_{k \ell}) \mu_k(a_{\ell j})) = -\beta$$
    $$\sgn(\mu_k(a_{ik}) \mu_k(a_{\ell i})\mu_k(a_{jk})\mu_k(a_{\ell j})) = -\beta$$
    $$\sgn(\mu_k(a_{ik}) \mu_k(a_{\ell i})\mu_k(a_{jk})\mu_k(a_{\ell j})\mu_k(a_{ik}) \mu_k(a_{kj}) \mu_k(a_{j i})) = \beta$$
    $$\sgn( \mu_k(a_{\ell i})\mu_k(a_{\ell j})\mu_k(a_{j i})) = \beta$$
    $$\sgn( \mu_k(a_{\ell i})\mu_k(a_{i j})\mu_k(a_{j \ell})) = \beta.$$
    If $\beta = -1$, then we know that $\mu_k(b_{j\ell}) \mu_k(b_{\ell i}) < 0$.
    Otherwise, both $\{i,k,\ell\}$ and $\{j,k, \ell\}$ would be differing types of cycles, contradicting our earlier observation.
    Thus $Z$ is an undirected cycle in $\mu_k(B)$, and $- \mu_k(a_{\ell i})\mu_k(a_{i j})\mu_k(a_{j \ell}) > 0$ as desired.

    If $\beta = 1$, then $\mu_k(b_{j\ell}) \mu_k(b_{\ell i}) > 0$.
    When $Z$ is undirected, then $\{i,j,k, \ell\}$ induces a vortex in $\mu_k(B)$, where one of $i$ or $j$ is the apex of this vortex, contradicting our assumption on $\mu_k(B)$.
    If $Z$ is directed, then  $- \mu_k(a_{\ell i})\mu_k(a_{i j})\mu_k(a_{j \ell}) < 0$ as desired. 
    As such, if both $B$ and $\mu_k(B)$ are complete and vortex-free and $A$ is an admissible GIM for $B$, then $\mu_k(A) $ is an admissible GIM for $\mu_k(B)$.
\end{proof}

As an immediate corollary, we may use Lemmas \ref{lem-fork-vortices} and \ref{lem-admissible-complete} to say something in general about forks, which gives us Theorem \ref{thm-linear-ordering} as a consequence.

\begin{Cor} \label{cor-fork-mutation-admissible}
    Let $F$ be a fork with point of return $r$ and $\bw$ be a reduced mutation sequence such that every quiver arrived at is both complete and vortex-free.
    If $A$ is the admissible GIM given by Corollary \ref{cor-fork-admissible-ordering}, then $A^\bw$ is an admissible GIM for $\mu_\bw(F)$.
    In particular, if $F$ has 0-forkless part, then $\bw$ can be any mutation sequence.
\end{Cor}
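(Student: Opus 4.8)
The plan is to induct on the length $\ell$ of the mutation sequence $\bw = [i_1, \dots, i_\ell]$, with Lemma~\ref{lem-admissible-complete} serving as the engine of the inductive step. Writing $A^\bw = \mu_{i_\ell} \circ \cdots \circ \mu_{i_1}(A)$ in accordance with Definition~\ref{def-GIM-mutation}, and $\mu_\bw(F) = B^\bw$ for the corresponding sequence of quiver mutations, the assertion is that admissibility of the GIM propagates along the entire sequence in lockstep with the quiver mutations.

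For the base case $\ell = 0$, we have $A^{[\,]} = A$, which is an admissible GIM for $F$ by Corollary~\ref{cor-fork-admissible-ordering}. Before running the induction I would record that $F$ itself is both complete and vortex-free: it is complete because a fork is abundant by Definition~\ref{def-forks1}, so it has at least one arrow between every pair of vertices, and it is vortex-free by Lemma~\ref{lem-fork-vortices}. For the inductive step, suppose $\bw = \bw'[k]$ with the statement already known for $\bw'$, so that $A^{\bw'}$ is an admissible GIM for $\mu_{\bw'}(F)$. Both $\mu_{\bw'}(F)$ and $\mu_\bw(F) = \mu_k(\mu_{\bw'}(F))$ are complete and vortex-free — the former by the base-case observation when $\bw'$ is empty and by the standing hypothesis of the corollary otherwise, the latter by that same hypothesis. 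Hence every requirement of Lemma~\ref{lem-admissible-complete} is met, and it yields that $A^\bw = \mu_k(A^{\bw'})$ is an admissible GIM for $\mu_\bw(F)$, closing the induction.

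The only genuine subtlety is ensuring that the intermediate quivers satisfy the ``complete and vortex-free'' hypotheses of Lemma~\ref{lem-admissible-complete} at every stage, but this is exactly what is assumed in the corollary, so the induction threads through cleanly and all the real content is carried by Lemma~\ref{lem-admissible-complete}; I therefore expect no serious obstacle here, as this result is essentially just its iteration (reducedness of $\bw$ is harmless, serving only to avoid trivial consecutive repeats). For the final ``in particular'' clause, if $F$ has $0$-forkless part then every quiver reached by any reduced mutation sequence is again a fork, hence automatically complete (being abundant) and vortex-free by Lemma~\ref{lem-fork-vortices}; thus the standing hypothesis holds for free and $\bw$ may be taken to be an arbitrary reduced mutation sequence.
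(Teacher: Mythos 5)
Your proof is correct and matches the paper's intended argument exactly: the paper presents this corollary as an immediate consequence of Lemma~\ref{lem-fork-vortices} and Lemma~\ref{lem-admissible-complete} (with the base case supplied by Corollary~\ref{cor-fork-admissible-ordering}), which is precisely the induction you spell out. Your handling of the ``in particular'' clause---every quiver in the mutation class of a fork with $0$-forkless part is a fork, hence abundant (so complete) and vortex-free---is also the intended reading.
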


We now observe how this affects the $l$-vectors, beginning with a corollary of the definition.

\begin{Cor}
    Let $B$ any quiver and $A$ any corresponding GIM.
    For any reduced mutation sequence $\bw$ and $i,j \in \mathcal{I}$, we have that 
    \begin{equation*}
    l_i^{\bw[j]} =
    \begin{cases}
    l_i^{\bw} & \text{if $b_{ij}^{\bw}c_j^{\bw } \leq 0$} \\
    \mathbf{r}_j^{\bw}(l_i^{\bw}) = g_j^{\bw} \mathbf{r}_j (g_j^{\bw})^{-1}(g_i^{\bw}(\alpha_i)) & \text{if  $b_{ij}^{\bw}c_j^{\bw } > 0$}
    \end{cases}.
    \end{equation*}
    
\end{Cor}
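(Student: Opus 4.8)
The plan is to prove this directly from Definitions~\ref{def-r} and \ref{def-ell}, by tracking not merely the reflection $\mathbf{r}_i^{\bw}$ but the specific representing element $g_i^{\bw}\in\mathcal W$ produced by the recursion. First I would make explicit the recursion for $g_i^{\bw}$ that is implicit in Definition~\ref{def-r}: set $g_i^{[\,]}=1$ in the base case, so that $\mathbf{r}_i=s_i=g_i^{[\,]}s_i(g_i^{[\,]})^{-1}$ and $l_i^{[\,]}=\alpha_i$; then propagate $g_i^{\bw}$ through a single mutation according to the two branches of \eqref{def-sx_i-1}.

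For the branch $b_{ij}^{\bw}c_j^{\bw}\leq 0$, Definition~\ref{def-r} gives $\mathbf{r}_i^{\bw[j]}=\mathbf{r}_i^{\bw}$, so one may take $g_i^{\bw[j]}=g_i^{\bw}$. Evaluating the associated action at $\alpha_i$ then yields $l_i^{\bw[j]}=g_i^{\bw[j]}(\alpha_i)=g_i^{\bw}(\alpha_i)=l_i^{\bw}$, which is the first branch of the claimed formula. For the branch $b_{ij}^{\bw}c_j^{\bw}>0$, Definition~\ref{def-r} gives $\mathbf{r}_i^{\bw[j]}=\mathbf{r}_j^{\bw}\mathbf{r}_i^{\bw}\mathbf{r}_j^{\bw}$. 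Substituting $\mathbf{r}_i^{\bw}=g_i^{\bw}s_i(g_i^{\bw})^{-1}$ and using that $\mathbf{r}_j^{\bw}$ is an involution (so $(\mathbf{r}_j^{\bw})^{-1}=\mathbf{r}_j^{\bw}$), I would rewrite
$$\mathbf{r}_i^{\bw[j]}=\mathbf{r}_j^{\bw}g_i^{\bw}s_i(g_i^{\bw})^{-1}\mathbf{r}_j^{\bw}=\bigl(\mathbf{r}_j^{\bw}g_i^{\bw}\bigr)s_i\bigl(\mathbf{r}_j^{\bw}g_i^{\bw}\bigr)^{-1},$$
which identifies $g_i^{\bw[j]}=\mathbf{r}_j^{\bw}g_i^{\bw}$ as a valid representing element. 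Evaluating at $\alpha_i$ gives $l_i^{\bw[j]}=\mathbf{r}_j^{\bw}\bigl(g_i^{\bw}(\alpha_i)\bigr)=\mathbf{r}_j^{\bw}(l_i^{\bw})$, and expanding $\mathbf{r}_j^{\bw}=g_j^{\bw}s_j(g_j^{\bw})^{-1}=g_j^{\bw}\mathbf{r}_j(g_j^{\bw})^{-1}$ (recall $\mathbf{r}_j=s_j$) together with $l_i^{\bw}=g_i^{\bw}(\alpha_i)$ reproduces the expression $g_j^{\bw}\mathbf{r}_j(g_j^{\bw})^{-1}(g_i^{\bw}(\alpha_i))$ in the second branch.

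The only genuine subtlety, and the step I would be most careful about, is that the representative $g_i^{\bw}$ is not uniquely determined by the reflection $\mathbf{r}_i^{\bw}$, so one must check the comparison of $l$-vectors is consistent. This is handled by recalling from Definition~\ref{def-ell} that $l_i^{\bw}$ takes values in $\mathbb Z^n/\sgn$: replacing $g_i^{\bw}$ by $g_i^{\bw}s_i$ leaves $\mathbf{r}_i^{\bw}$ unchanged but sends $g_i^{\bw}(\alpha_i)\mapsto g_i^{\bw}(s_i(\alpha_i))=g_i^{\bw}(-\alpha_i)=-g_i^{\bw}(\alpha_i)$, the same class modulo $\sgn$. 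Hence the recursion $g_i^{\bw[j]}=\mathbf{r}_j^{\bw}g_i^{\bw}$ (resp. $g_i^{\bw[j]}=g_i^{\bw}$) is well-defined at the level of $l$-vectors no matter which representative was chosen at the previous stage, so all the equalities above hold in $\mathbb Z^n/\sgn$ as required. No deeper input is needed; the statement is essentially a bookkeeping consequence of how a chosen factorization of a reflection propagates through one mutation.
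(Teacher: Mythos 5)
Your proof is correct and is essentially the argument the paper intends: the paper states this result as an immediate consequence of Definitions~\ref{def-r} and~\ref{def-ell}, and your explicit recursion $g_i^{\bw[j]}=g_i^{\bw}$ (resp.\ $g_i^{\bw[j]}=\mathbf{r}_j^{\bw}g_i^{\bw}$) is exactly the bookkeeping left implicit there, as one can see from how the same identities are invoked without comment in the proof of Lemma~\ref{lem-l-mutation}. Your additional check that the ambiguity $g_i^{\bw}\mapsto g_i^{\bw}s_i$ only flips the sign of $g_i^{\bw}(\alpha_i)$, which is absorbed by working in $\mathbb{Z}^n/\sgn$, is a careful touch consistent with the paper's conventions rather than a departure from them.
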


\begin{Lem} \label{lem-l-mutation}
    Let $B$ any quiver and $A$ any corresponding GIM.
    For any reduced mutation sequence $\bw$ and $i,j \in \mathcal{I}$, we have that 
    \begin{equation*}
    l_i^{\bw[j]} =
    \begin{cases}
    l_i^{\bw} & \text{if $b_{ij}^{\bw}c_j^{\bw } \leq 0$} \\
    l_i^{\bw} + \beta_{ij}^{\bw}l_j^{\bw} & \text{if  $b_{ij}^{\bw}c_j^{\bw } > 0$}
    \end{cases},
    \end{equation*}
    where $\beta_{ij}^{\bw} \alpha_j = P_j(g_j^{\bw})^{-1}(l_i^{\bw})$, where $P_j = \pi(\mathbf{r}_j) - I$.
\end{Lem}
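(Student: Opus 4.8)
The plan is to deduce this from the preceding corollary by a short linear computation carried out on genuine representatives $l_i^\bw = g_i^\bw(\alpha_i)$ inside $\Gamma = \mathbb{Z}^n$, passing to $\mathbb{Z}^n/\sgn$ only at the end. The case $b_{ij}^\bw c_j^\bw \leq 0$ is immediate: the corollary already gives $l_i^{\bw[j]} = l_i^\bw$, which is exactly the first branch of the claimed formula. So all the content lies in the case $b_{ij}^\bw c_j^\bw > 0$.

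In that case the corollary yields $l_i^{\bw[j]} = \mathbf{r}_j^\bw(l_i^\bw) = g_j^\bw s_j (g_j^\bw)^{-1}(l_i^\bw)$, where I use $\mathbf{r}_j^\bw = g_j^\bw s_j (g_j^\bw)^{-1}$ and $\mathbf{r}_j = s_j$. The one algebraic idea is to write the middle reflection additively as $\pi(s_j) = I + P_j$ with $P_j = \pi(\mathbf{r}_j) - I$, so that
\[ l_i^{\bw[j]} = g_j^\bw (I + P_j)(g_j^\bw)^{-1}(l_i^\bw) = l_i^\bw + g_j^\bw\bigl(P_j (g_j^\bw)^{-1}(l_i^\bw)\bigr), \]
using $g_j^\bw (g_j^\bw)^{-1}(l_i^\bw) = l_i^\bw$.

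The decisive point is that $P_j$ has image in the rank-one sublattice $\mathbb{Z}\alpha_j$: from $\pi(s_j)(\alpha_m) = \alpha_m - a_{mj}\alpha_j$ one reads off $P_j(\alpha_m) = -a_{mj}\alpha_j$ for every $m$, hence $P_j$ sends every vector to a multiple of $\alpha_j$. Therefore $P_j (g_j^\bw)^{-1}(l_i^\bw) = \beta_{ij}^\bw \alpha_j$ for a unique integer $\beta_{ij}^\bw$, which is precisely the defining relation for $\beta_{ij}^\bw$ in the statement. Applying $g_j^\bw$ and using $g_j^\bw(\alpha_j) = l_j^\bw$ turns the last display into $l_i^{\bw[j]} = l_i^\bw + \beta_{ij}^\bw l_j^\bw$, as claimed.

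No step here is a serious obstacle; the computation is essentially forced once the reflection is split as $I + P_j$. The only thing to watch is the $\sgn$-ambiguity built into the definition of $l$-vectors (which arises precisely because $g_i^\bw$ is determined only up to right multiplication by $s_i$, and $s_i(\alpha_i) = -\alpha_i$). I would handle this by fixing the representatives $g_i^\bw$ produced by the inductive Definition~\ref{def-r}, verifying the identity in $\Gamma$ on the nose, and then observing that it descends to $\mathbb{Z}^n/\sgn$, with the scalar $\beta_{ij}^\bw$ absorbing the residual sign.
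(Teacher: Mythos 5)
Your proof is correct and follows essentially the same route as the paper's: both cases are read off from the preceding corollary, the reflection is split as $\pi(\mathbf{r}_j)=I+P_j$, and the rank-one property of $P_j$ (image contained in $\mathbb{Z}\alpha_j$, which the paper phrases as ``$P_j$ is $0$ everywhere but the $j$th row'') yields $P_j(g_j^{\bw})^{-1}(l_i^{\bw})=\beta_{ij}^{\bw}\alpha_j$, after which applying $g_j^{\bw}$ finishes the argument. Your explicit handling of the $\sgn$-ambiguity by fixing the representatives from Definition~\ref{def-r} is a small extra point of care that the paper leaves implicit.
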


\begin{proof}
    If $b_{ij}^\bw c_k^\bw \leq 0$, then 
    $$l_i^{\bw[j]} = l_i^\bw,$$
    as $l_i^{\bw[j]} = g_i^{\bw[j]}(\alpha_i)$.
    If $b_{ij}^\bw c_k^\bw \geq 0$, then 
    $$l_i^{\bw[j]} = g_j^{\bw} \mathbf{r}_j(g_j^{\bw})^{-1}g_i^{\bw}(\alpha_i) = g_j^{\bw} \mathbf{r}_j(g_j^{\bw})^{-1}(l_i^\bw).$$
    As $\pi(\mathbf{r}_j) = P_j + I$, we get that
    $$l_i^{\bw[j]} = l_i^\bw + g_j^{\bw}P_j(g_j^{\bw})^{-1}(l_i^\bw).$$
    Since $P_j$ is 0 everywhere but the $j$th row, the product $P_j(g_j^{\bw})^{-1}(l_i^\bw) = \beta_{ij}^\bw \alpha_j$ for coefficient $\beta_{ij}^\bw$.
    Hence, 
    $$l_i^{\bw[j]} = l_i^\bw +  g_j^{\bw}( \alpha_j) = l_i^\bw + \beta_{ij}^\bw l_j^\bw.$$
\end{proof}

\begin{Cor} \label{cor-beta-diagonal}
    Let $B$ any quiver and $A$ any corresponding GIM.
    For any reduced mutation sequence $\bw$ and $i \in \mathcal{I}$, we have that
    $$\beta_{ii}^\bw = P_i(g_i^{\bw})^{-1}(l_i^\bw) = P_i(\alpha_i) = -2.$$
\end{Cor}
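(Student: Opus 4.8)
The plan is to unwind the definition of $\beta_{ii}^\bw$ supplied by Lemma~\ref{lem-l-mutation} and reduce everything to a single application of the representation $\pi$ to the root $\alpha_i$. Recall that Lemma~\ref{lem-l-mutation} defines the coefficient through the equation $\beta_{ij}^\bw\alpha_j = P_j(g_j^\bw)^{-1}(l_i^\bw)$, where $P_j = \pi(\mathbf{r}_j) - I$. First I would specialize this to $j=i$, so that $\beta_{ii}^\bw\alpha_i = P_i(g_i^\bw)^{-1}(l_i^\bw)$, which is exactly the first claimed equality.

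The second equality is where the one nontrivial simplification occurs. Since by Definition~\ref{def-ell} we have $l_i^\bw = g_i^\bw(\alpha_i)$, applying $(g_i^\bw)^{-1}$ collapses $g_i^\bw$ and returns $(g_i^\bw)^{-1}(l_i^\bw) = (g_i^\bw)^{-1}g_i^\bw(\alpha_i) = \alpha_i$. Hence $\beta_{ii}^\bw\alpha_i = P_i(\alpha_i)$, giving the middle term of the corollary's display.

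It then remains to evaluate $P_i(\alpha_i)$. Here I would use that in $P_i = \pi(\mathbf{r}_i) - I$ the reflection $\mathbf{r}_i$ is the \emph{initial} reflection $s_i$ (per Definition~\ref{def-r}), so $P_i(\alpha_i) = \pi(s_i)(\alpha_i) - \alpha_i$. The representation formula $\pi(s_i)(\alpha_j) = \alpha_j - a_{ji}\alpha_i$ with $j=i$ and the GIM normalization $a_{ii}=2$ yield $\pi(s_i)(\alpha_i) = \alpha_i - 2\alpha_i = -\alpha_i$. Therefore $P_i(\alpha_i) = -\alpha_i - \alpha_i = -2\alpha_i$, and comparing with $\beta_{ii}^\bw\alpha_i = P_i(\alpha_i)$ forces the scalar $\beta_{ii}^\bw = -2$.

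There is no real obstacle here: the result is a one-line computation once the definitions are chained correctly. The only point worth flagging is the mild abuse of notation in the statement, where the scalar $\beta_{ii}^\bw$ is equated with the vector-valued expressions; the content is that $P_i(\alpha_i)=-2\alpha_i$, so the proportionality constant along $\alpha_i$ is $-2$. The single substantive input is the diagonal value $a_{ii}=2$ of a GIM, which is precisely what makes the answer independent of $\bw$ and of the quiver's arrow multiplicities.
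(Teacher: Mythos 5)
Your proof is correct and is precisely the intended argument: the paper states this corollary without proof, treating it as immediate from Lemma~\ref{lem-l-mutation}, Definition~\ref{def-ell}, and the GIM normalization $a_{ii}=2$, which is exactly the chain of definitions you unwind. Your remark that $P_i(\alpha_i)=-2\alpha_i$ and the displayed equalities involve a mild scalar/vector abuse of notation is also a fair reading of the paper's own statement.
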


\begin{Lem} \label{lem-l-mutation-A}
    Let $B$ be a quiver that is both complete and vortex-free and $A$ be any admissible GIM.
    For any reduced mutation sequence $\bw$ such that every quiver arrived at is both complete and vortex-free and $i,j \in \mathcal{I}$, we have that 
    \begin{equation*}
    l_i^{\bw[j]} =
    \begin{cases}
    l_i^{\bw} & \text{if $b_{ij}^{\bw}c_j^{\bw } \leq 0$} \\
    l_i^{\bw} - a_{ij}^{\bw}l_j^{\bw} & \text{if  $b_{ij}^{\bw}c_j^{\bw } > 0$}
    \end{cases},
    \end{equation*}
    where $a_{ij}^{\bw}$ is the $(i,j)$ entry of $A^\bw$.
    Additionally, $a_{ij}^{\bw} = -P_j(g_j^{\bw})^{-1}(l_i^{\bw})$, where $P_j = \pi(\mathbf{r}_j) - I$.
\end{Lem}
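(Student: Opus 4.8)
The plan is to collapse the whole statement into a single symmetric bilinear-form identity and then verify that identity by induction on the length of $\bw$. Write $\omega(x,y):=\sum_{p,q}a_{pq}\,x_p y_q$ for the $\mathbb Z$-bilinear form attached to the (symmetric) initial GIM $A$. First I would record three preliminary facts. (i) Checking on the generators $s_k$, and using $a_{kk}=2$ together with $a_{pm}=a_{mp}$, gives $\omega(\pi(s_k)\alpha_p,\pi(s_k)\alpha_m)=\omega(\alpha_p,\alpha_m)$, so $\omega$ is symmetric and $\mathcal W$-invariant: $\omega(gx,gy)=\omega(x,y)$ for all $g\in\mathcal W$. (ii) Since $P_j=\pi(\mathbf r_j)-I$ sends $\alpha_m\mapsto -a_{mj}\alpha_j$, the $\alpha_j$-coefficient of $P_j\big((g_j^\bw)^{-1}(l_i^\bw)\big)$ equals $-\omega\big(\alpha_j,(g_j^\bw)^{-1}l_i^\bw\big)$, and invariance plus $g_j^\bw(\alpha_j)=l_j^\bw$ turns this into $-\omega(l_i^\bw,l_j^\bw)$; hence the ``Additionally'' claim $a_{ij}^\bw=-P_j(g_j^\bw)^{-1}(l_i^\bw)$ is exactly the assertion $a_{ij}^\bw=\omega(l_i^\bw,l_j^\bw)$. (iii) Writing $\mathbf r_k^\bw=g_k^\bw s_k (g_k^\bw)^{-1}$ and invoking (i) yields the reflection formula $\mathbf r_k^\bw(x)=x-\omega(x,l_k^\bw)\,l_k^\bw$, which I will combine with the recursion from the corollary immediately preceding Lemma~\ref{lem-l-mutation}.

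Next I would prove $a_{ij}^\bw=\omega(l_i^\bw,l_j^\bw)$ for every reduced $\bw$ along which all quivers are complete and vortex-free, by induction on the length. The base case $\bw=[\,]$ is immediate since $l_i=\alpha_i$ and $\omega(\alpha_i,\alpha_j)=a_{ij}$. For the step I assume the identity for $\bw$ (so in particular the symmetry $a_{pq}^\bw=a_{qp}^\bw$) and pass to $\bw[k]$. By (iii) and the inductive identity $\omega(l_i^\bw,l_k^\bw)=a_{ik}^\bw$, the $l$-vectors evolve by $l_i^{\bw[k]}=l_i^\bw$ when $b_{ik}^\bw c_k^\bw\le 0$ and by $l_i^{\bw[k]}=\mathbf r_k^\bw(l_i^\bw)=l_i^\bw-a_{ik}^\bw l_k^\bw$ when $b_{ik}^\bw c_k^\bw>0$; note $l_k^{\bw[k]}=l_k^\bw$ because $b_{kk}^\bw=0$. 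I then expand $\omega(l_i^{\bw[k]},l_j^{\bw[k]})$ bilinearly and match it against the GIM rule of Definition~\ref{def-GIM-mutation} case by case (the diagonal $i=j$ being immediate, the off-diagonal cases splitting into $i=k\neq j$, its transpose, and $i,j\neq k$), repeatedly using $\omega$-invariance, $\omega(l_k^\bw,l_k^\bw)=2$, and the inductive identity. That $A^\bw$ really is the GIM being mutated along the sequence is guaranteed by Corollary~\ref{cor-fork-mutation-admissible}.

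The arithmetic heart, and the step I expect to require the most care, is reconciling the two sign conditions driving the two recursions: Definition~\ref{def-GIM-mutation} branches on the sign of $b_{ik}^\bw b_{kj}^\bw$, whereas the reflection rule branches on the signs of $b_{ik}^\bw c_k^\bw$ and $b_{jk}^\bw c_k^\bw$. Here completeness of every intermediate quiver is essential: it forces $b_{ik}^\bw,b_{jk}^\bw\neq 0$ and $c_k^\bw$ to carry a well-defined nonzero sign $\epsilon$, and then, because $b^\bw$ is skew-symmetric, $b_{ik}^\bw b_{kj}^\bw>0$ holds if and only if exactly one of $b_{ik}^\bw c_k^\bw>0$, $b_{jk}^\bw c_k^\bw>0$ holds. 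This equivalence lines up the ``subtract $a_{ik}^\bw a_{kj}^\bw$'' branch of the GIM rule with the case in which precisely one of $l_i^\bw,l_j^\bw$ is reflected, giving $\omega(l_i^{\bw[k]},l_j^{\bw[k]})=\omega(l_i^\bw,l_j^\bw)-a_{ik}^\bw a_{kj}^\bw=a_{ij}^\bw-a_{ik}^\bw a_{kj}^\bw$, while the ``unchanged'' branch corresponds to both or neither being reflected, where invariance gives $\omega(l_i^\bw,l_j^\bw)=a_{ij}^\bw$. The row and column indexed by $k$ are handled separately using $l_k^{\bw[k]}=l_k^\bw$ and $\omega(l_k^\bw,l_k^\bw)=2$, which reproduce the entries $-\sgn(b_{jk}^\bw c_k^\bw)a_{kj}^\bw$ of the GIM rule (e.g.\ when $b_{jk}^\bw c_k^\bw>0$ one gets $\omega(l_k^\bw,l_j^\bw-a_{jk}^\bw l_k^\bw)=a_{kj}^\bw-2a_{jk}^\bw=-a_{kj}^\bw$ by symmetry).

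Finally, having established $a_{ij}^\bw=\omega(l_i^\bw,l_j^\bw)$, preliminary fact (ii) delivers the ``Additionally'' identity $a_{ij}^\bw=-P_j(g_j^\bw)^{-1}(l_i^\bw)$, that is, $\beta_{ij}^\bw=-a_{ij}^\bw$ in the notation of Lemma~\ref{lem-l-mutation}. Substituting this into that lemma immediately yields $l_i^{\bw[j]}=l_i^\bw-a_{ij}^\bw l_j^\bw$ in the case $b_{ij}^\bw c_j^\bw>0$, while the case $b_{ij}^\bw c_j^\bw\le 0$ is already identical, completing the proof.
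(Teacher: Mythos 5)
Your proposal is correct, but it genuinely inverts the paper's logic. The paper proves this lemma first and directly: it shows $\beta_{ij}^{\bw}=-a_{ij}^{\bw}$ by an operator-level induction inside $\mathrm{End}(\Gamma)$, expanding expressions like $P_j(g_k^{\bw}\mathbf{r}_k(g_k^{\bw})^{-1}g_j^{\bw})^{-1}(l_i^{\bw})$ case by case according to the signs of $b_{ik}^{\bw}c_k^{\bw}$ and $b_{jk}^{\bw}c_k^{\bw}$, and matching the resulting $\beta$-recursion against the GIM mutation rule; only afterwards does it use the lemma to prove $a_{ij}^{\bw}=l_i^{\bw}A(l_j^{\bw})^T$ (Proposition~\ref{lem-fork-admissible-quadratic-form}). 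You instead first establish the $\mathcal{W}$-invariance of the symmetric form $\omega$ attached to $A$ (a one-line check on generators using $a_{kk}=2$ and symmetry), observe that invariance identifies $\beta_{ij}^{\bw}=-\omega(l_i^{\bw},l_j^{\bw})$, and then prove $a_{ij}^{\bw}=\omega(l_i^{\bw},l_j^{\bw})$ by induction --- essentially proving the Proposition first and obtaining the lemma as a corollary; your handling of the sign bookkeeping (completeness plus skew-symmetry forcing $b_{ik}^{\bw}b_{kj}^{\bw}>0$ iff exactly one of $b_{ik}^{\bw}c_k^{\bw},b_{jk}^{\bw}c_k^{\bw}$ is positive) is exactly the reconciliation needed, and there is no circularity since the $l$-vector recursion you use comes from the corollary preceding Lemma~\ref{lem-l-mutation} together with invariance and the inductive hypothesis. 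What your route buys: invariance trivializes several of the paper's computations ($\omega(l_i^{\bw},l_i^{\bw})=2$ is immediate, as is the case where both vectors are reflected), and it delivers Proposition~\ref{lem-fork-admissible-quadratic-form} for free. What the paper's route buys: it avoids introducing the bilinear-form machinery altogether, working purely with coefficient recursions in the algebra, and it keeps the lemma logically prior to (and usable as an ingredient in) the Proposition, which is how the paper's exposition is organized.
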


\begin{proof}
    We need only show that $\beta_{ij}^\bw = -a_{ij}^\bw $ for each $\bw$, where the case $\bw = []$ is trivial.
    Assume then that $\beta_{ij}^\bw = -a_{ij}^\bw $ for some mutation sequence $\bw$, and we argue by induction.

    As $\beta_{ij}^{\bw[k]}$ is the coefficient of $\alpha_j$ in $P_j(g_j^{\bw[k]})^{-1}(l_i^{\bw[k]})$, we split into several cases depending on the sign of $b_{jk}^\bw c_k^\bw$ and $b_{ik}^\bw c_k^\bw$, assuming that $k \notin \{i,j\}$.
    If both are less than or equal to zero, then $\beta_{ij}^{\bw[k]} = \beta_{ij}^{\bw}$.
    This implies that $b_{ik}^\bw b_{kj}^\bw \leq 0$, forcing 
    $$a_{ij}^{\bw[k]} = a_{ij}^\bw =  -\beta_{ij}^{\bw} = -\beta_{ij}^{\bw[k]}.$$
    
    If $b_{jk}^\bw c_k^\bw \leq 0$ and $b_{ik}^\bw c_k^\bw  \geq 0$, then 
    $$\beta_{ij}^{\bw[k]} \alpha_j = P_j(g_k^{\bw} \mathbf{r}_k(g_k^{\bw})^{-1}g_j^{\bw})^{-1}(l_i^{\bw})$$
    $$= P_j(g_j^{\bw})^{-1} g_k^{\bw}\mathbf{r}_k(g_k^{\bw})^{-1}(l_i^{\bw})$$
    $$= P_j(g_j^{\bw})^{-1} g_k^{\bw}(P_k + I )(g_k^{\bw})^{-1}(l_i^{\bw})$$
    $$= P_j(g_j^{\bw})^{-1} g_k^{\bw}P_k(g_k^{\bw})^{-1}(l_i^{\bw}) + P_j(g_j^{\bw})^{-1}(l_i^{\bw}) $$
    $$= P_j(g_j^{\bw})^{-1} g_k^{\bw}(\beta_{ik}^\bw \alpha_k) + \beta_{ij}^\bw \alpha_j $$
    $$= \beta_{ik}^\bw P_j(g_j^{\bw})^{-1}(l_k^\bw) + \beta_{ij}^\bw \alpha_j $$
    $$= (\beta_{ik}^\bw \beta_{kj}^\bw + \beta_{ij}^\bw) \alpha_j.$$
    Additionally, we get that $b_{ik}^\bw b_{kj}^\bw \geq 0$.
    Then
    $$a_{ij}^{\bw[k]} = a_{ij}^\bw - a_{ik}^\bw a_{kj}^\bw.$$
    Thus we have that
    $$a_{ij}^{\bw[k]} = a_{ij}^\bw - a_{ik}^\bw a_{kj}^\bw = -(\beta_{ik}^\bw \beta_{kj}^\bw + \beta_{ij}^\bw) = - \beta_{ij}^{\bw[k]}.$$
    If instead $b_{jk}^\bw c_k^\bw \geq 0$ and $b_{ik}^\bw c_k^\bw \leq 0$, then a similar argument follows.

    Now, if $b_{jk}^\bw c_k^\bw \geq 0$ and $b_{ik}^\bw c_k^\bw \geq 0$, then
    $$\beta_{ij}^{\bw[k]} \alpha_j = P_j(g_k^{\bw} \mathbf{r}_k(g_k^{\bw})^{-1}g_j^{\bw})^{-1}(l_i^{\bw} + \beta_{ik}^\bw l_k^\bw)$$
    $$= P_j(g_j^{\bw})^{-1} g_k^{\bw}\mathbf{r}_k(g_k^{\bw})^{-1}(l_i^{\bw}) + \beta_{ik}^\bw P_j(g_j^{\bw})^{-1} g_k^{\bw}\mathbf{r}_k(g_k^{\bw})^{-1}(l_k^\bw)$$
    $$= (\beta_{ik}^\bw \beta_{kj}^\bw + \beta_{ij}^\bw) \alpha_j + \beta_{ik}^\bw P_j(g_j^{\bw})^{-1} g_k^{\bw}(P_k + I)(g_k^{\bw})^{-1}(l_k^\bw)$$
    $$= (\beta_{ik}^\bw \beta_{kj}^\bw + \beta_{ij}^\bw) \alpha_j + \beta_{ik}^\bw [P_j(g_j^{\bw})^{-1} g_k^{\bw}(P_k)(g_k^{\bw})^{-1}(l_k^\bw) + P_j(g_j^{\bw})^{-1}(l_k^\bw)]$$
    $$= (\beta_{ik}^\bw \beta_{kj}^\bw + \beta_{ij}^\bw) \alpha_j + \beta_{ik}^\bw [P_j(g_j^{\bw})^{-1} g_k^{\bw}(\beta_{kk}^\bw \alpha_k) +\beta_{kj}^\bw \alpha_j]$$
    $$= (\beta_{ik}^\bw \beta_{kj}^\bw + \beta_{ij}^\bw) \alpha_j + \beta_{ik}^\bw [\beta_{kk}^\bw P_j(g_j^{\bw})^{-1} ( l_k^\bw) +\beta_{kj}^\bw \alpha_j]$$
    $$= (\beta_{ik}^\bw \beta_{kj}^\bw + \beta_{ij}^\bw) \alpha_j + \beta_{ik}^\bw [\beta_{kk}^\bw \beta_{kj}^\bw \alpha_j +\beta_{kj}^\bw \alpha_j]$$
    $$= (\beta_{ik}^\bw \beta_{kj}^\bw + \beta_{ij}^\bw + \beta_{ik}^\bw\beta_{kk}^\bw \beta_{kj}^\bw  +\beta_{ik}^\bw\beta_{kj}^\bw) \alpha_j$$
    $$= (\beta_{ij}^\bw + (\beta_{kk}^\bw + 2)\beta_{ik}^\bw \beta_{kj}^\bw) \alpha_j.$$
    From Corollary \ref{cor-beta-diagonal} and the inductive hypothesis, we know that $\beta_{kk}^\bw = - a_{kk}^\bw = -2$.
    Thus
    $$\beta_{ij}^{\bw[k]} = \beta_{ij}^\bw.$$
    As $b_{ik}^\bw b_{kj}^\bw < 0$, we get that
    $$a_{ij}^{\bw[k]} = a_{ij}^\bw = - \beta_{ij}^\bw = -\beta_{ij}^{\bw[k]}.$$

    We now deal with the cases of $\beta_{ik}^{\bw[k]}$ and $\beta_{kj}^{\bw[k]}$.
    As $\beta_{ik}^{\bw[k]} = \beta_{ik}^\bw$ whenever $b_{ik}^\bw c_k^\bw \leq 0$ and $\beta_{ik}^{\bw[k]} = -\beta_{ik}^\bw$ whenever $b_{ik}^\bw c_k^\bw \geq 0$, we get in either case that
    $$a_{ik}^{\bw[k]} =  -\sgn(b_{ik}^\bw c_k^\bw) a_{ik}^\bw =\pm a_{ik}^\bw = \mp \beta_{ik}^\bw = - \beta_{ik}^{\bw[k]}.$$
    A similar argument shows that $a_{kj}^{\bw[k]} = - \beta_{kj}^{\bw[k]}$.

    Finally, we always get that
    $$a_{kk}^{\bw[k]} = a_{kk}^\bw = -\beta_{kk}^{\bw} = -\beta_{kk}^{\bw[k]} = -2.$$
    This completes the inductive step, and shows that $a_{ij}^\bw = - P_j(g_j^{\bw})^{-1}(l_i^{\bw}) = -\beta_{ij}^\bw$
\end{proof}

\begin{Prop} \label{lem-fork-admissible-quadratic-form}
    Let $B$ be a quiver that is both complete and vortex-free and $A$ be any admissible GIM.
    For any reduced mutation sequence $\bw$ such that every quiver arrived at is both complete and vortex-free and $i,j \in \mathcal{I}$, we have that 
    $$a_{ij}^\bw = l_i^{\bw} A (l_j^{\bw})^T.$$

In particular, when $i=j$,
$$
2=l_i^{\bw} A (l_i^{\bw})^T,
$$
    which implies that $l_i^{\bw}$ is a solution to the quadratic equation of the form 
\begin{equation}
\sum_{i=1}^n x_i^2 + \sum_{1\leq i<j\leq n} \pm a_{ij} x_i x_j =1.
\end{equation}
 %\KL{use $a_{ij}$ instead of $q_{ij}$}
\end{Prop}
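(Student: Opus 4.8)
The plan is to read the symmetric GIM $A$ as a symmetric bilinear form $\la v,w\ra := vA w^T$ on $\Gamma\cong\mathbb Z^n$, and then to observe that the representation $\pi$ realizes $\mathcal W$ as a group of isometries of this form. Indeed, because $a_{ii}=2$ and $A$ is symmetric, the defining action $\pi(s_i)(\alpha_j)=\alpha_j-a_{ji}\alpha_i$ is precisely the orthogonal reflection $v\mapsto v-\la v,\alpha_i\ra\alpha_i$, which preserves $\la\cdot,\cdot\ra$ by the standard one-line check (using $\la\alpha_i,\alpha_i\ra=2$ and symmetry). Consequently every $g\in\mathcal W$, and in particular each $g_i^\bw$, satisfies $\la g(v),g(w)\ra=\la v,w\ra$. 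This isometry property is the engine of the whole argument.

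First I would record that $P_j(v)=-\la v,\alpha_j\ra\alpha_j$, which is immediate from $P_j=\pi(s_j)-I$ and $\pi(s_j)(v)=v-\la v,\alpha_j\ra\alpha_j$. Then I invoke Lemma~\ref{lem-l-mutation-A}, which already establishes $a_{ij}^\bw=-\beta_{ij}^\bw$, where $\beta_{ij}^\bw$ is the coefficient of $\alpha_j$ in $P_j(g_j^\bw)^{-1}(l_i^\bw)$. Combining these,
$$\beta_{ij}^\bw\,\alpha_j = P_j(g_j^\bw)^{-1}(l_i^\bw) = -\la (g_j^\bw)^{-1}(l_i^\bw),\,\alpha_j\ra\,\alpha_j,$$
so $\beta_{ij}^\bw=-\la (g_j^\bw)^{-1}(l_i^\bw),\alpha_j\ra$. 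Applying the isometry property of $g_j^\bw$ together with $l_j^\bw=g_j^\bw(\alpha_j)$ converts the twisted pairing into an honest one:
$$\beta_{ij}^\bw = -\la (g_j^\bw)^{-1}(l_i^\bw),\alpha_j\ra = -\la l_i^\bw,\,g_j^\bw(\alpha_j)\ra = -\la l_i^\bw,\,l_j^\bw\ra.$$
Hence $a_{ij}^\bw=-\beta_{ij}^\bw=\la l_i^\bw,l_j^\bw\ra=l_i^\bw A(l_j^\bw)^T$, which is the asserted identity.

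For the diagonal specialization $i=j$ the value is forced twice over: on one hand $a_{ii}^\bw=2$ for any GIM, and on the other hand, since $l_i^\bw=g_i^\bw(\alpha_i)$ with $g_i^\bw$ an isometry, $l_i^\bw A(l_i^\bw)^T=\la\alpha_i,\alpha_i\ra=a_{ii}=2$; note this quantity is well defined despite the $\mathbb Z^n/\sgn$ ambiguity in $l_i^\bw$, as the quadratic form is invariant under $l\mapsto -l$. Expanding $2=l_i^\bw A(l_i^\bw)^T$ using $a_{kk}=2$ and $a_{k\ell}=a_{\ell k}$ gives $2\sum_k (l_i^\bw)_k^2+2\sum_{k<\ell}a_{k\ell}(l_i^\bw)_k(l_i^\bw)_\ell=2$; dividing by $2$ produces the quadratic equation in the stated shape, with $|a_{k\ell}|=q_{k\ell}$ supplying the coefficients and fixing the $\pm$ signs.

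The only genuine content sits in the first two paragraphs, and the step I expect to require the most care is the clean identification of $\pi$ as an action by isometries of the form attached to $A$ — this is exactly where symmetry of $A$ and the normalization $a_{ii}=2$ are used. Once that is in place, the passage from the $(g_j^\bw)^{-1}$-twisted coefficient $\beta_{ij}^\bw$ to the symmetric pairing $\la l_i^\bw,l_j^\bw\ra$ is a single application of the isometry identity, and everything else is the inductive bookkeeping already discharged in Lemma~\ref{lem-l-mutation-A}.
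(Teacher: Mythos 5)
Your proof is correct, but it takes a genuinely different route from the paper's. The paper proves the identity $a_{ij}^\bw = l_i^{\bw} A (l_j^{\bw})^T$ by a fresh induction on $\bw$: in the inductive step it splits into the cases $i=j$, $k\in\{i,j\}$, $k\notin\{i,j\}$, and then into sub-cases according to the signs of $b_{ik}^\bw c_k^\bw$ and $b_{jk}^\bw c_k^\bw$, each time expanding $l_i^{\bw[k]} A (l_j^{\bw[k]})^T$ bilinearly via the mutation rule $l_i^{\bw[k]} = l_i^\bw - a_{ik}^\bw l_k^\bw$ of Lemma~\ref{lem-l-mutation-A} and matching the result against Seven's GIM mutation rule (Definition~\ref{def-GIM-mutation}). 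You bypass this case analysis entirely: you observe that, because $A$ is symmetric with $a_{ii}=2$, the representation $\pi$ sends each $s_i$ to the orthogonal reflection of the form $\la v,w\ra = vAw^T$, so every $g_j^\bw$ acts as an isometry and $P_j(v) = -\la v,\alpha_j\ra\alpha_j$; then the ``additionally'' clause of Lemma~\ref{lem-l-mutation-A}, namely $a_{ij}^\bw = -\beta_{ij}^\bw$, converts in a single application of the isometry identity into $a_{ij}^\bw = \la l_i^\bw, l_j^\bw\ra$. This is legitimate since Lemma~\ref{lem-l-mutation-A} precedes the Proposition and its proof is independent of it, and the hypotheses (complete, vortex-free, admissible $A$) enter only through that lemma, while the isometry step uses only the paper's standing assumption that $A$ is symmetric. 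What your route buys is brevity and a conceptual explanation: the $l$-vectors lie in the $\mathcal W$-orbit of the simple roots and the form attached to $A$ is $\mathcal W$-invariant, so the quadratic relation $l_i^\bw A (l_i^\bw)^T = \la\alpha_i,\alpha_i\ra = 2$ is automatic (the paper instead re-derives $a_{ii}^{\bw[k]}=2$ by hand), and your remark about invariance under $l\mapsto -l$ also cleanly handles the $\mathbb Z^n/\sgn$ ambiguity, which the paper leaves implicit. What the paper's longer induction buys is a self-contained, matrix-level verification whose sign case analysis runs parallel to (and rehearses machinery for) the later proof of Theorem~\ref{thm-a-equal-b-sign}.
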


\begin{proof}
    If $\bw = []$, then the result is trivial.
    Assume then that it holds for some $\bw$, and we will investigate $\bw[k]$.
    To do so, we look at $b_{ik}^\bw c_k^{\bw}$ and $b_{jk}^\bw c_k^{\bw}$, first considering the case when $i = j$.

    If $i = j$, then $b_{ik}^\bw b_{ki}^\bw \leq 0$.
    Thus
    $$a_{ii}^{\bw[k]} = a_{ii}^\bw = 2.$$
    If $b_{ik}^\bw c_k^{\bw} \geq0$, then
    $$l_i^{\bw[k]} A (l_i^{\bw[k]})^T = (l_i^\bw - a_{ik}^\bw l_k^\bw) A (l_i^\bw - a_{ik}^\bw l_k^\bw)^T$$
    $$= l_i^\bw A (l_i^\bw)^T - a_{ik}^\bw l_i^\bw A (l_k^\bw)^T - a_{ik}^\bw l_k^\bw A (l_i^\bw)^T + a_{ik}^\bw a_{ki}^\bw l_k^\bw A (l_k^\bw)^T $$
    $$= l_i^\bw A (l_i^\bw)^T - 2a_{ik}^\bw a_{ik}^\bw + (a_{ik}^\bw)^2 a_{kk}^\bw $$
    $$= l_i^\bw A (l_i^\bw)^T$$
    If $b_{ik}^\bw c_k^{\bw} \leq 0$, then
    $$l_i^{\bw[k]} A (l_i^{\bw[k]})^T = l_i^\bw A (l_i^\bw)^T.$$
    In either case, we get that 
    $$a_{ii}^{\bw[k]} = a_{ii}^\bw = 2 = l_i^\bw A (l_i^\bw)^T = l_i^{\bw[k]} A (l_i^{\bw[k]})^T.$$

    Thus we may now assume $i \neq j$.
    If $i = k$ and $b_{kj} \neq 0$, then $b_{kk}^\bw = 0$ and 
    $$a_{kj}^{\bw[k]} = -\sgn(b_{jk}^\bw c_k^\bw) a_{kj}^\bw.$$
    We know that $b_{kk}^\bw c_k^{\bw} = 0$. 
    Thus we only need look at $b_{jk}^\bw c_k^{\bw}$.
    
    If $b_{jk}^\bw c_k^{\bw} \geq0$, then
    $$l_k^{\bw[k]} A (l_j^{\bw[k]})^T = l_k^\bw A (l_j^\bw - a_{jk}^\bw l_k^\bw)^T$$
    $$= l_k^\bw A (l_j^\bw)^T - a_{jk}^\bw l_k^\bw A (l_k^\bw)^T$$
    $$= a_{kj}^\bw - 2 a_{jk}^\bw = -a_{kj}^\bw$$
    This then forces 
    $$a_{kj}^{\bw[k]} = -a_{kj}^\bw = l_k^{\bw[k]} A (l_j^{\bw[k]})^T $$
    If $b_{jk}^\bw c_k^{\bw} \leq 0$, then
    $$l_k^{\bw[k]} A (l_j^{\bw[k]})^T = l_k^\bw A (l_j^\bw)^T$$
    This then forces
    $$a_{kj}^{\bw[k]} = a_{kj}^\bw = l_k^\bw A (l_j^\bw)^T = l_k^{\bw[k]} A (l_j^{\bw[k]})^T $$
    If $j = k$, then a similar argument results in 
    $$a_{ik}^{\bw[k]} = l_i^{\bw[k]} A (l_k^{\bw[k]})^T.$$
    
    We may now assume that $i \neq j$ and $k \notin \{i,j\}$.
    If $b_{ik}^\bw c_k^{\bw} \geq0$ and $b_{jk}^\bw c_k^{\bw} \geq0$, then $b_{ik}^\bw b_{kj}^\bw < 0$.
    Thus
    $$a_{ij}^{\bw[k]} = a_{ij}^\bw$$
    and
    $$l_i^{\bw[k]} A (l_j^{\bw[k]})^T = (l_i^\bw - a_{ik}^\bw l_k^\bw) A (l_j^\bw - a_{jk}^\bw l_k^\bw)^T$$
    $$= l_i^\bw A (l_j^\bw)^T  - a_{jk}^\bw l_i^\bw A (l_k^\bw)^T - a_{ik}^\bw l_k^\bw A (l_j^\bw)^T + a_{ik}^\bw a_{jk}^\bw l_k^\bw A (l_k^\bw)^T$$
    $$= a_{ij}^\bw - a_{jk}^\bw a_{ik}^\bw - a_{ik}^\bw a_{kj}^\bw+ a_{ik}^\bw a_{jk}^\bw a_{kk}^\bw$$
    $$= a_{ij}^\bw - 2a_{ik}^\bw a_{kj}^\bw+ 2a_{ik}^\bw a_{kj}^\bw = a_{ij}^\bw.$$
    Hence,
    $$a_{ij}^{\bw[k]} = l_i^{\bw[k]} A (l_j^{\bw[k]})^T.$$

    If $b_{ik}^\bw c_k^{\bw} \leq 0$ and $b_{jk}^\bw c_k^{\bw} \leq 0$, then $b_{ik}^\bw b_{kj}^\bw < 0$ again.
    Thus 
    $$a_{ij}^{\bw[k]} = a_{ij}^\bw = l_i^\bw A (l_j^\bw)^T = l_i^{\bw[k]} A (l_j^{\bw[k]})^T.$$

    If $b_{ik}^\bw c_k^{\bw} \geq0$ and $b_{jk}^\bw c_k^{\bw} \leq 0$, then $b_{ik}^\bw b_{kj}^\bw > 0$.
    Thus
    $$a_{ij}^{\bw[k]} = a_{ij}^\bw - a_{ik}^\bw a_{kj}^\bw$$
    and 
    $$l_i^{\bw[k]} A (l_j^{\bw[k]})^T = (l_i^\bw - a_{ik}^\bw l_k^\bw) A (l_j^\bw)^T$$
    $$= l_i^\bw A (l_j^\bw)^T  - a_{ik}^\bw l_k^\bw A (l_j^\bw)^T$$
    $$= a_{ij}^\bw - a_{ik}^\bw a_{kj}^\bw.$$
    Hence,
    $$a_{ij}^{\bw[k]} = l_i^{\bw[k]} A (l_j^{\bw[k]})^T.$$
    A similar argument gives the case where $b_{ik}^\bw c_k^{\bw} \leq 0$ and $b_{jk}^\bw c_k^{\bw} \geq 0$.

    Finally, as $B^\bw$ is complete, we do not have $b_{ik}^\bw c_k^\bw = 0$ or $b_{jk}^\bw c_k^\bw = 0$.
    Therefore, this completes the inductive step and proves that
    $$a_{ij}^\bw = l_i^\bw A (l_j^\bw)^T.$$
\end{proof}

\section{$C$-Vectors And $L$-Vectors}

To prove Theorem \ref{thm-l-equals-c}, we will work through another inductive argument. In this section, we will prove Theorem~\ref{thm-a-equal-b-sign}, which implies Theorem \ref{thm-l-equals-c}. 
In this proof, we will find several situations where it is desirable to disallow certain situations on the signs of the $c$-vectors.
As such, we will start by classifying what the signs of these $c$-vectors look like, beginning with Proposition \ref{prop-control-signs-fork}.

\begin{Prop}{\cite{ervin_all_2024}} \label{prop-control-signs-fork}
    Let $B$ be a fork and $\bw$ a non-trivial fork-preserving mutation sequence.
    If $r$ is the last mutation of $\bw$, then 
    \begin{itemize}
        \item If $b_{rj}^\bw c_j^\bw \geq 0$ for some vertex $j \neq r$, then either $\sgn(c_j^{\bw}) = \sgn(c_r^\bw)$ or $ \sgn(c_j^\bw) c_j^\bw \geq \sgn(c_r^\bw) c_r^\bw$;

        \item If $b_{rj}^\bw c_j^\bw \leq 0$ for some vertex $j \neq r$, then $\sgn(c_j^{\bw}) = -\sgn(c_r^\bw)$;
        
        \item If $b_{ij}^\bw c_j^\bw \geq 0$ for distinct vertices $i$ and $j$ that are neither $r$, then $\sgn(c_i^{\bw}) = \sgn(c_j^\bw)$.
    \end{itemize}
\end{Prop}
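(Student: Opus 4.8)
The plan is to argue by induction on the length $\ell$ of the fork-preserving sequence $\bw$, proving the three assertions simultaneously, since they interlock and a single mutual induction is cleanest. Throughout I would use two external inputs: the sign-coherence of $c$-vectors \cite{derksen_quivers_2008}, which guarantees that each $c_j^\bw$ has a well-defined sign $\epsilon_j := \sgn(c_j^\bw)$, and Warkentin's results \cite{warkentin_exchange_2014}, which ensure that every quiver $B^{\bw}$ reached by a fork-preserving sequence is again a fork whose point of return is exactly the last mutated vertex; in particular $B^\bw$ is abundant, so $b_{ij}^\bw \neq 0$ and $|b_{ij}^\bw| \geq 2$ for all $i \neq j$. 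The one computational tool is the $c$-vector mutation rule: writing $\bw = \bw'[r]$, one has $c_r^\bw = -c_r^{\bw'}$ and, for $j \neq r$,
\[
c_j^\bw = \begin{cases} c_j^{\bw'} + |b_{jr}^{\bw'}|\, c_r^{\bw'} & \text{if } b_{jr}^{\bw'} c_r^{\bw'} > 0,\\ c_j^{\bw'} & \text{if } b_{jr}^{\bw'} c_r^{\bw'} \leq 0.\end{cases}
\]

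For the base case $\bw = [r]$ I would mutate $[B \mid I]$ once: then $c_r^{[r]} = -e_r$ (writing $e_j$ for the $j$-th standard basis vector), while $c_j^{[r]} = e_j + |b_{jr}| e_r$ for the vertices $j$ with $b_{jr} > 0$ and $c_j^{[r]} = e_j$ otherwise. A direct check confirms all three assertions; the only non-obvious point is the magnitude clause of the first assertion, and here abundance is decisive: when $\epsilon_j \neq \epsilon_r$ one has entrywise $\sgn(c_j^{[r]})c_j^{[r]} = e_j + |b_{jr}| e_r \geq e_r = \sgn(c_r^{[r]})c_r^{[r]}$ precisely because $|b_{jr}| \geq 2 \geq 1$.

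For the inductive step, let $r'$ be the point of return of $B^{\bw'}$ (the last mutation of $\bw'$), so $r \neq r'$ and $r$ is a non-return vertex of $B^{\bw'}$; the induction hypothesis gives the three assertions for $B^{\bw'}$ relative to $r'$. Applying the rule above, I would determine each $\epsilon_j^\bw$ and each mutated entry $b_{rj}^\bw = -b_{rj}^{\bw'}$, $b_{ij}^\bw$ by a case analysis on the sign of $b_{jr}^{\bw'}c_r^{\bw'}$ and on whether the augmentation $c_j^{\bw'} + |b_{jr}^{\bw'}|c_r^{\bw'}$ involves cancellation, i.e. $\epsilon_j^{\bw'} = -\epsilon_r^{\bw'}$. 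The observation that makes this tractable is that, by the third assertion applied to $B^{\bw'}$, the cancellation case $b_{jr}^{\bw'}c_r^{\bw'}>0$ together with $\epsilon_j^{\bw'} = -\epsilon_r^{\bw'}$ is impossible whenever $j \neq r'$, so for those $j$ every new sign is read off with no magnitude comparison. The single delicate case is $j = r'$: there the first and second assertions for $B^{\bw'}$ apply (they compare the non-return $c_r^{\bw'}$ with the point of return $c_{r'}^{\bw'}$), and when cancellation occurs the first assertion forces $\sgn(c_r^{\bw'})c_r^{\bw'} \geq \sgn(c_{r'}^{\bw'})c_{r'}^{\bw'}$ entrywise, whence
\[
c_{r'}^\bw = \epsilon_r^{\bw'}\bigl(|b_{r'r}^{\bw'}|\,\sgn(c_r^{\bw'})c_r^{\bw'} - \sgn(c_{r'}^{\bw'})c_{r'}^{\bw'}\bigr)
\]
has sign $\epsilon_r^{\bw'}$ because $|b_{r'r}^{\bw'}| \geq 2$. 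This is exactly where abundance and the magnitude clause conspire to pin the sign down.

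The main obstacle I anticipate is not any individual sign computation but the bookkeeping needed to re-establish the magnitude clause of the first assertion for the \emph{new} point of return $r$: one must verify, case by case, that the updated vectors $c_j^\bw$ — together with the mutated arrows $b_{rj}^\bw$, $b_{ij}^\bw$, which must be matched against the fork structure of $B^\bw$ supplied by Warkentin — still satisfy $\sgn(c_j^\bw)c_j^\bw \geq \sgn(c_r^\bw)c_r^\bw$ whenever the signs disagree. Keeping the three assertions synchronized through the mutation, so that the hypotheses required for $B^\bw$ are precisely the conclusions proved for $B^{\bw'}$, is the technical heart of the argument, with sign-coherence guaranteeing at each stage that $\epsilon_j^\bw$ is well defined before it is identified.
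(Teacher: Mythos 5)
First, a point of comparison you could not have known: the paper does not prove this proposition at all --- it is imported, with citation, from \cite{ervin_all_2024}. So there is no internal proof to measure your argument against, and your proposal must stand on its own.

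Your strategy is the natural one and the parts you actually carried out are correct: mutual induction on the length of $\bw$ over all three clauses; sign-coherence plus Warkentin's theorem that mutating a fork away from its point of return yields a fork whose point of return is the mutated vertex; the base case; the observation that clause 3 of the inductive hypothesis forbids cancellation in $c_j^{\bw}=c_j^{\bw'}+|b_{jr}^{\bw'}|c_r^{\bw'}$ for every $j$ other than the old point of return $r'$; and the treatment of the one cancellation-prone vertex $j=r'$, where the magnitude clause of clause 1 together with abundance pins down $\sgn(c_{r'}^{\bw})$. With these in hand, clauses 1 and 2 for the new fork do follow by direct case-checking (the magnitude clause re-establishes itself almost for free, since whenever the $c$-vector is updated it absorbs a summand $|b_{jr}^{\bw'}|\,|c_r^{\bw'}|\geq 2\,|c_r^{\bw'}|$, which dominates $|c_r^{\bw}|=|c_r^{\bw'}|$ entrywise).

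The genuine gap is in what you defer as ``bookkeeping,'' and your diagnosis of where the difficulty sits is off: the hard part is not the magnitude clause of clause 1 but clause 3 for pairs involving the old point of return $r'$. There the mutated exchange entry is $b_{ir'}^{\bw}=b_{ir'}^{\bw'}+\sgn(b_{ir}^{\bw'})\max(b_{ir}^{\bw'}b_{rr'}^{\bw'},0)$, and in several of the sign configurations that actually occur the two summands have opposite signs, so one must either prove a strict inequality $|b_{ir}^{\bw'}|\,|b_{rr'}^{\bw'}|>|b_{ir'}^{\bw'}|$ or show the configuration cannot arise. None of the tools you quantitatively deploy --- sign-coherence, abundance $|b|\geq 2$, the $c$-vector magnitude clause --- yields either conclusion. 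What is needed is the defining fork inequalities of Definition~\ref{def-forks1} for $B^{\bw'}$ (namely $f_{ji}>f_{ir'}$ and $f_{ji}>f_{r'j}$ for $i\in F^-(r')$, $j\in F^+(r')$), which give the required domination $|b_{ir}^{\bw'}|\,|b_{rr'}^{\bw'}|\geq 2|b_{ir}^{\bw'}|>|b_{ir'}^{\bw'}|$, together with the orientation constraint that every arrow between $F^+(r')$ and $F^-(r')$ points from $F^+(r')$ to $F^-(r')$, which is what excludes the remaining bad configuration (e.g.\ $\sgn(b_{ir'}^{\bw'})=\sgn(b_{r'r}^{\bw'})=\sgn(b_{ir}^{\bw'})$ with $i$ and $r$ on opposite sides of $r'$). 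Your text gestures at ``matching against the fork structure'' but never invokes these multiplicity inequalities, and without them the inductive step cannot close; the argument can be completed along your lines, but this missing ingredient is the crux of the proof rather than routine bookkeeping.
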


From this, we have an immediate corollary on the case where $b_{ir}^{\bw} c_r^\bw \geq 0$.

\begin{Cor} \label{cor-signs-por}
    Let $B$ be a fork and $\bw$ a non-trivial fork-preserving mutation sequence.
    If $r$ is the last mutation of $\bw$---hence the point of return of $B^\bw$---and $b_{ir}^{\bw} c_r^\bw \geq 0$ for some vertex $i \neq r$, then $\sgn(c_i^\bw) = -\sgn(c_r^\bw)$ and $\sgn(c_i)^{\bw} c_i^{\bw} \geq \sgn(c_r^\bw) c_r^\bw$. 
\end{Cor}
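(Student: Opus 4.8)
The plan is to reduce Corollary \ref{cor-signs-por} to Proposition \ref{prop-control-signs-fork} by a skew-symmetry translation, so that the hypothesis $b_{ir}^\bw c_r^\bw \geq 0$ (which has $r$ in the \emph{second} subscript and features the $c$-vector $c_r^\bw$) is recast into the form $b_{ri}^\bw c_i^\bw$ that appears in the bullets of the Proposition. First I would record the standing facts: since $\bw$ is fork-preserving, $B^\bw$ is again a fork, hence abundant, so $b_{ir}^\bw \neq 0$; and every $c$-vector is nonzero with all nonzero entries of a common sign, so $\sgn(c_i^\bw)$ and $\sgn(c_r^\bw)$ are well defined in $\{\pm 1\}$. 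Because $b_{ir}^\bw \neq 0$, the hypothesis $b_{ir}^\bw c_r^\bw \geq 0$ is in fact a genuine sign condition, forcing $\sgn(b_{ir}^\bw) = \sgn(c_r^\bw)$; by skew-symmetry of $B^\bw$ this is equivalent to $\sgn(b_{ri}^\bw) = -\sgn(c_r^\bw)$.

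For the sign claim $\sgn(c_i^\bw) = -\sgn(c_r^\bw)$, I would argue by contradiction. Suppose instead $\sgn(c_i^\bw) = \sgn(c_r^\bw)$. Then the vector $b_{ri}^\bw c_i^\bw$ has uniform sign $\sgn(b_{ri}^\bw)\sgn(c_i^\bw) = -\sgn(c_r^\bw)^2 = -1$, i.e.\ $b_{ri}^\bw c_i^\bw \leq 0$. Applying the second bullet of Proposition \ref{prop-control-signs-fork} with $j = i$ then yields $\sgn(c_i^\bw) = -\sgn(c_r^\bw)$, contradicting the assumption. Hence $\sgn(c_i^\bw) = -\sgn(c_r^\bw)$, as desired.

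Finally, for the magnitude claim $\sgn(c_i^\bw) c_i^\bw \geq \sgn(c_r^\bw) c_r^\bw$, I would feed the now-established sign information back into the first bullet. With $\sgn(c_i^\bw) = -\sgn(c_r^\bw)$ and $\sgn(b_{ri}^\bw) = -\sgn(c_r^\bw)$, the vector $b_{ri}^\bw c_i^\bw$ has uniform sign $(-\sgn(c_r^\bw))^2 = 1$, so $b_{ri}^\bw c_i^\bw \geq 0$. The first bullet of Proposition \ref{prop-control-signs-fork} with $j = i$ gives the dichotomy that either $\sgn(c_i^\bw) = \sgn(c_r^\bw)$ or $\sgn(c_i^\bw) c_i^\bw \geq \sgn(c_r^\bw) c_r^\bw$; since the first alternative was just excluded, the magnitude inequality follows. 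The only real subtlety---and the step I would be most careful about---is the bookkeeping in the translation: making sure the $\geq 0$ hypotheses become genuine sign equalities (which relies on abundance giving $b_{ir}^\bw \neq 0$ and on $c$-vectors being nonzero), and that the roles of the first and second subscripts, and of $c_r^\bw$ versus $c_i^\bw$, are swapped correctly so that the Proposition applies verbatim with $j = i$.
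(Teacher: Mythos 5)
Your proposal is correct and matches the paper's intent: the paper states this corollary without proof as an ``immediate'' consequence of Proposition~\ref{prop-control-signs-fork}, and your argument---using abundance of the fork $B^\bw$ and nonvanishing of $c$-vectors to turn the hypothesis into $\sgn(b_{ri}^\bw)=-\sgn(c_r^\bw)$, then applying the second bullet to rule out $\sgn(c_i^\bw)=\sgn(c_r^\bw)$ and the first bullet to extract the magnitude inequality---is exactly the derivation being left to the reader. No gaps; the bookkeeping with skew-symmetry and the substitution $j=i$ is handled correctly.
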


Corollary \ref{cor-signs-por} leads to some results on the possible configurations the signs of three different $c$-vectors can take during a fork-preserving mutation sequence.

\begin{Lem} \label{lem-signs-equal}
    Let $B$ be a fork and $\bw$ a non-trivial fork-preserving mutation sequence.
    Then
    $$\sgn(c_i^{\bw}) = \sgn(c_j^{\bw}) = \sgn(c_k^{\bw})$$
    for three distinct vertices $\{i,j,k\}$ only when the subquiver induced by $\{i,j,k\}$ does not form an oriented 3-cycle.
\end{Lem}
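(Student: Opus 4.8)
The plan is to prove the contrapositive: assuming $\{i,j,k\}$ induces an oriented $3$-cycle in $B^\bw$, I will show that $c_i^\bw, c_j^\bw, c_k^\bw$ cannot all share a common sign. The first step is a purely structural reduction. Since $\bw$ is fork-preserving, $B^\bw$ is again a fork, and by Corollary~\ref{cor-signs-por} its point of return $r$ is exactly the last index mutated in $\bw$. I claim that every oriented $3$-cycle of a fork must pass through $r$. Indeed, by Definition~\ref{def-forks1} the full subquivers on $F^-(r)$ and $F^+(r)$ are acyclic, and every arrow between these two sets points from $F^+(r)$ to $F^-(r)$ (the first clause of Definition~\ref{def-forks1} forces $f_{ji}>f_{ir}\geq 1$ for $j\in F^+(r)$ and $i\in F^-(r)$). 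Hence a vertex of $F^+(r)$ can only emit arrows to $F^-(r)$, and a vertex of $F^-(r)$ can only receive arrows from $F^+(r)$; examining the possible ways to distribute three non-return vertices among $F^-(r)$ and $F^+(r)$ shows that in each case some vertex is forced to be a pure source or a pure sink of the induced subquiver, which is incompatible with an oriented $3$-cycle. Therefore $r\in\{i,j,k\}$, and after relabeling the cycle has the form $a\to r\to b\to a$ with $a\in F^-(r)$ and $b\in F^+(r)$; equivalently $b_{ar}^\bw>0$ and $b_{br}^\bw<0$.

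With this set-up, the second step is a short sign computation driven by Corollary~\ref{cor-signs-por}. Suppose toward a contradiction that
\[
\sgn(c_a^\bw)=\sgn(c_r^\bw)=\sgn(c_b^\bw)=:\sigma\in\{1,-1\}.
\]
Because $c_r^\bw\neq 0$ has all of its entries of sign $\sigma$, the vector $b_{ar}^\bw c_r^\bw$ has all entries of sign $\sigma$, while $b_{br}^\bw c_r^\bw$ has all entries of sign $-\sigma$. If $\sigma=1$, then $b_{ar}^\bw c_r^\bw\geq 0$, so Corollary~\ref{cor-signs-por} applied to the vertex $a$ gives $\sgn(c_a^\bw)=-\sgn(c_r^\bw)=-1$, contradicting $\sgn(c_a^\bw)=\sigma=1$. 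If instead $\sigma=-1$, then $b_{br}^\bw c_r^\bw\geq 0$, and the same corollary applied to $b$ gives $\sgn(c_b^\bw)=-\sgn(c_r^\bw)=1$, contradicting $\sgn(c_b^\bw)=\sigma=-1$. Either way we reach a contradiction, so the three signs cannot coincide.

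I expect the only genuinely delicate point to be the structural reduction of the first paragraph: one must be certain that $B^\bw$ is still a fork whose point of return is the final mutation index (so that Corollary~\ref{cor-signs-por} applies) and that no oriented $3$-cycle can avoid $r$. These facts are what make the vertex $r$ available as the ``anchor'' to which the corollary is applied; once $r$ is known to lie on the cycle, the sign bookkeeping is automatic, since one of the two cycle-arrows incident to $r$ always yields a product $b_{\cdot r}^\bw c_r^\bw\geq 0$ of exactly the type the corollary requires. It is also worth noting where non-triviality of $\bw$ is used: for $\bw=[\,]$ every $c$-vector is a standard basis vector of sign $+1$ while $B$ itself contains oriented $3$-cycles through $r$, so the hypothesis that $\bw$ is non-trivial---precisely what Corollary~\ref{cor-signs-por} demands---is essential.
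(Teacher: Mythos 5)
Your proof is correct and follows essentially the same route as the paper's: reduce to the case where the point of return $r$ lies on the oriented $3$-cycle, then derive a contradiction from Corollary~\ref{cor-signs-por}, since one of the two cycle-arrows at $r$ forces a product $b_{\cdot r}^\bw c_r^\bw \geq 0$. The only difference is that you spell out in detail the structural fact (every oriented $3$-cycle of a fork passes through $r$) and the two-case sign check that the paper compresses into single sentences.
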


\begin{proof}
    First assume that $\{i,j,k\}$ induces an oriented 3-cycle as a subquiver of $B^\bw$.
    Then that implies that $r \in \{i,j,k\}$, as $B^\bw$ is a fork with point of return $r$.
    Assume without loss of generality that $r = k$.
    Then 
    $$\sgn(c_i^{\bw}) = \sgn(c_j^{\bw}) = \sgn(c_k^{\bw})$$
    implies that at least one of $b_{ik}^{\bw} c_k^\bw$ and $b_{jk}^{\bw} c_k^\bw$ is a positive quantity.
    This contradicts the equality of all three signs by Corollary \ref{cor-signs-por}, proving the result.
\end{proof}

We then have one final condition on the signs of the $c$-vectors that will be useful in our proof of Theorem \ref{thm-l-equals-c}.

\begin{Lem} \label{lem-signs-i-j-equal-k-not}
    Let $B$ be a fork and $\bw$ a non-trivial fork-preserving mutation sequence.
    If
    $$\sgn(c_i^{\bw}) = \sgn(c_j^{\bw}) = -\sgn(c_k^{\bw})$$
    and $b_{ij}^\bw c_j^\bw \leq 0$ for three distinct vertices $\{i,j,k\}$, then at least one of the following holds:
    \begin{itemize}
        \item $b_{ik}^\bw b_{kj}^\bw < 0$

        \item $b_{ik}^\bw c_k^\bw \leq 0$
    \end{itemize} 
\end{Lem}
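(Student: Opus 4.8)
The plan is to argue by contradiction. Suppose neither conclusion holds, so that $b_{ik}^\bw b_{kj}^\bw \geq 0$ and $b_{ik}^\bw c_k^\bw > 0$. Since $B^\bw$ is a fork it is complete, hence every off-diagonal entry of $B^\bw$ is nonzero and in fact $b_{ik}^\bw b_{kj}^\bw > 0$. Reading these as sign statements, and combining them with the hypotheses, I record three relations: $\sgn(b_{ik}^\bw) = \sgn(b_{kj}^\bw)$ (from $b_{ik}^\bw b_{kj}^\bw > 0$), $\sgn(b_{ik}^\bw) = \sgn(c_k^\bw)$ (from $b_{ik}^\bw c_k^\bw > 0$), and $\sgn(b_{ij}^\bw) = -\sgn(c_j^\bw)$ (from the hypothesis $b_{ij}^\bw c_j^\bw \leq 0$, which is strict by completeness). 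Let $p$ denote the last index mutated in $\bw$; since $\bw$ is non-trivial this exists, and it is the point of return of $B^\bw$. I then split into cases according to whether $p$ equals $i$, $j$, $k$, or none of them.

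The easy cases are those with $p \neq i$ and $p \neq k$, namely $p = j$ and $p \notin \{i,j,k\}$. Here both $i$ and $k$ differ from the point of return, so $b_{ik}^\bw c_k^\bw > 0$ lets me apply the third bullet of Proposition~\ref{prop-control-signs-fork} to the pair $(i,k)$, yielding $\sgn(c_i^\bw) = \sgn(c_k^\bw)$, which contradicts the hypothesis $\sgn(c_i^\bw) = -\sgn(c_k^\bw)$. The case $p = i$ is quicker still and uses only the standing hypotheses: then $b_{ij}^\bw c_j^\bw \leq 0$ reads $b_{pj}^\bw c_j^\bw \leq 0$ with $j \neq p$, so the second bullet of Proposition~\ref{prop-control-signs-fork} gives $\sgn(c_j^\bw) = -\sgn(c_p^\bw) = -\sgn(c_i^\bw)$, again contradicting $\sgn(c_i^\bw) = \sgn(c_j^\bw)$.

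The remaining case $p = k$ is the main obstacle, since the third bullet is no longer available and I must instead use the geometry of the fork directly. Writing $\epsilon = \sgn(c_k^\bw)$ and using $\sgn(c_j^\bw) = -\epsilon$, the three relations above become $\sgn(b_{ik}^\bw) = \sgn(b_{kj}^\bw) = \epsilon$ and $\sgn(b_{ij}^\bw) = \epsilon$. The first two say that, viewed from the point of return $k = p$, the vertices $i$ and $j$ lie on opposite sides: one belongs to $F^-(p)$ and the other to $F^+(p)$, with the two subcases $\epsilon = +1$ and $\epsilon = -1$ merely swapping their roles. On the other hand, the abundance condition in Definition~\ref{def-forks1} forces every arrow between an $F^+(p)$-vertex and an $F^-(p)$-vertex to point from the former to the latter. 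I then check in each subcase that the direction of the arrow between $i$ and $j$ recorded by $\sgn(b_{ij}^\bw) = \epsilon$ is exactly the opposite of what this fork condition demands, producing the final contradiction. I expect the bookkeeping of which vertex lands in $F^-(p)$ versus $F^+(p)$ (and the corresponding reversal of the required arrow) to be the only delicate point; Corollary~\ref{cor-signs-por} can be invoked to confirm the sign pattern but is not needed to close the argument.
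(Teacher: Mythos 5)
Your proof is correct and follows essentially the same route as the paper's: the same case split on whether the point of return equals $i$, $k$, or neither, the same appeals to Proposition~\ref{prop-control-signs-fork} in the first two situations, and the same use of the fork condition (arrows between $F^+(r)$ and $F^-(r)$ point from the former to the latter) to get $\sgn(b_{ij}^\bw)=-\sgn(b_{ik}^\bw)$ in the case $k=r$. The only cosmetic difference is that you frame the whole argument as a contradiction, whereas the paper directly deduces the second conclusion in each non-degenerate case.
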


\begin{proof}
    Assume that $r$ is the last mutation of $\bw$, making $B^\bw$ a fork with point of return $r$.
    Then $r \notin \{i,k\}$ and $\sgn(c_i^{\bw}) = -\sgn(c_k^\bw)$ would imply that $b_{ik}^\bw c_k^\bw \leq 0$ by Proposition \ref{prop-control-signs-fork}.
    If $i = r$, then $b_{ij}^{\bw} c_j^\bw \leq 0$ would imply that $\sgn(c_i^\bw) = -\sgn(c_j^\bw)$, a clear contradiction.
    As such, we may assume that $k = r$.
    If $b_{ik}^\bw b_{kj}^\bw > 0$, then the definition of a fork ensures that $\sgn(b_{ij}^\bw) = -\sgn(b_{ik}^\bw)$.
    Thus $b_{ik}^{\bw} c_j^{\bw} \geq0$, forcing $b_{ik}^\bw c_k^\bw \leq 0$ as $\sgn(c_j^{\bw}) = -\sgn(c_k^\bw) $ by assumption.
    Therefore, the result is shown.
\end{proof}

Finally, we introduce some new notation to better describe the relationship between $c$-vectors and $l$-vectors before tackling our main theorem.

\begin{Def} \label{def-epsilon-tau}
    Define two $n$-tuples, $\epsilon^{[k]}$ and $\tau^{[k]}$, for each vertex $k \neq r$, where
    $$\epsilon_{i}^{[k]} = \begin{cases}
        1 & \text{if } k \in B^-(r) \text{ and } i \neq k\\
        -1 & \text{if } k \in B^-(r) \text{ and } i = k\\
        -1 & \text{if } k \in B^+(r) \text{ and } i \not \in \{k,r\}\\
        1 & \text{if } k \in B^+(r) \text{ and } i \in \{k,r\}
    \end{cases}$$
    and
    $$\tau_{j}^{[k]} = \begin{cases}
        1 & \text{if } j = r\\
        1 & \text{if } k \in B^-(r)\\
        -1 & \text{if } k \in B^+(r) 
    \end{cases}.$$
    Additionally, for any non-empty mutation sequence $\bw$ not beginning with $r$, we have that
    $$\epsilon_{i}^{\bw[k]} = \begin{cases}
        -\epsilon_i^\bw & \text{if } i = k\\
        \epsilon_i^\bw  & \text{if } i \neq k\\
    \end{cases}$$
    and
    $$\tau_{j}^{\bw[k]} = \tau_j^\bw.$$
\end{Def}

\begin{Thm} \label{thm-a-equal-b-sign}
    Let $B$ be a fork and $A$ the GIM given in Corollary~\ref{cor-fork-admissible-ordering}.
    For any non-trivial fork-preserving mutation sequence $\bw$ and $i,j \in \mathcal{I}$, we have that $l_{ij}^\bw = \epsilon_i^\bw \tau_j^\bw c_{ij}^\bw$ and that
    $$a_{ij}^\bw = \begin{cases}
        2 & \text{if } i = j \\
        -\epsilon_i^\bw \epsilon_j^\bw |b_{ij}^\bw| & \text{if } b_{ij}^\bw c_j^\bw \geq 0 \\
        -\epsilon_i^\bw \epsilon_j^\bw \sgn(c_i^\bw) \sgn(c_j^\bw) |b_{ij}^\bw| & \text{if } b_{ij}^\bw c_j^\bw \leq 0. 
    \end{cases}$$
\end{Thm}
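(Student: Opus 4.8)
The plan is to prove both assertions simultaneously by induction on the length of the fork-preserving mutation sequence $\bw$, since the $l$-vector formula $l_{ij}^\bw = \epsilon_i^\bw \tau_j^\bw c_{ij}^\bw$ and the entrywise formula for $a_{ij}^\bw$ are tightly coupled: the recursion for $l$-vectors in Lemma~\ref{lem-l-mutation-A} feeds on the entries $a_{ij}^\bw$, and conversely the sign bookkeeping in the $a_{ij}^\bw$ formula is governed by the signs of $c$-vectors, which interact with $\epsilon^\bw$ and $\tau^\bw$. First I would verify the base case where $\bw=[k]$ is a single mutation (not at $r$), directly computing $l_i^{[k]}$, $c_i^{[k]}$, and $a_{ij}^{[k]}$ from the definitions against the explicit values of $\epsilon^{[k]}$ and $\tau^{[k]}$ given in Definition~\ref{def-epsilon-tau}; the split of $\epsilon^{[k]}$ according to whether $k\in B^-(r)$ or $k\in B^+(r)$ mirrors exactly which arrows reverse at the point of return, so this should match the GIM construction of Corollary~\ref{cor-fork-admissible-ordering}.

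For the inductive step, assume both formulas hold for $\bw$ and pass to $\bw[k]$. The $c$-vector side is governed by the standard $C$-matrix mutation \eqref{eqn-mmuu}: $c_i^{\bw[k]}$ equals $-c_k^\bw$ when $i=k$ and otherwise adds a multiple of $c_k^\bw$ exactly when $b_{ik}^\bw c_k^\bw>0$. The $l$-vector side is governed by Lemma~\ref{lem-l-mutation-A}, which produces $l_i^{\bw[k]}=l_i^\bw - a_{ik}^\bw l_j^\bw$-type updates under the same sign condition $b_{ik}^\bw c_k^\bw>0$. The key is that the update multiplier on the $l$-side is $-a_{ik}^\bw = \beta_{ik}^\bw$, and by the inductive hypothesis this coincides (up to the $\epsilon,\tau$ signs) with the multiplier on the $c$-side. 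I would track the evolution of $\epsilon_i^{\bw[k]}$ (which flips sign only in coordinate $i=k$) and $\tau_j^{\bw[k]}=\tau_j^\bw$ (which is constant) and check that these exactly account for the sign discrepancies introduced by the mutation, so that $l_{ij}^{\bw[k]} = \epsilon_i^{\bw[k]}\tau_j^{\bw[k]} c_{ij}^{\bw[k]}$ is preserved.

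Once the $l$-vector identity is established inductively, the formula for $a_{ij}^\bw$ follows from Proposition~\ref{lem-fork-admissible-quadratic-form}, which gives $a_{ij}^\bw = l_i^\bw A (l_j^\bw)^T$, combined with the just-proven substitution $l_i^\bw = \epsilon_i^\bw (\tau_{\bullet}^\bw \odot c_i^\bw)$; expanding this bilinear form and invoking the base-case GIM entries reduces $a_{ij}^\bw$ to the claimed expressions involving $|b_{ij}^\bw|$. The case distinction $b_{ij}^\bw c_j^\bw \geq 0$ versus $\leq 0$ must be matched against the $l$-vector recursion's own case split, and here the sign-control results—Proposition~\ref{prop-control-signs-fork}, Corollary~\ref{cor-signs-por}, and especially Lemmas~\ref{lem-signs-equal} and~\ref{lem-signs-i-j-equal-k-not}—are what guarantee that no inconsistent sign configurations among $(c_i^\bw, c_j^\bw, c_k^\bw)$ can arise during a fork-preserving sequence.

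The main obstacle I anticipate is the inductive step for $a_{ij}^{\bw[k]}$ when $k\notin\{i,j\}$ and both $b_{ik}^\bw c_k^\bw$ and $b_{jk}^\bw c_k^\bw$ are nonzero: here the interaction term $-a_{ik}^\bw a_{kj}^\bw$ (or its absence) in the GIM mutation must be reconciled with the product of $\epsilon$- and $\sgn(c)$-factors, and one must rule out the sign configuration in which $i,j,k$ would form a forbidden oriented cycle or vortex. This is precisely where Lemma~\ref{lem-signs-i-j-equal-k-not} does the heavy lifting, forcing either $b_{ik}^\bw b_{kj}^\bw<0$ or $b_{ik}^\bw c_k^\bw\leq 0$ whenever the signs would otherwise conflict; the careful enumeration of these cases, rather than any single deep idea, is the real work of the proof.
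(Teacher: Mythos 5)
Your overall skeleton---a simultaneous induction on the length of $\bw$, the base case $\bw=[k]$, the observation that the $c$-side multiplier $|b_{ik}^\bw|$ and the $l$-side multiplier $-a_{ik}^\bw$ agree up to the $\epsilon,\tau$ signs, and the use of Proposition~\ref{prop-control-signs-fork}, Corollary~\ref{cor-signs-por}, and Lemmas~\ref{lem-signs-equal} and~\ref{lem-signs-i-j-equal-k-not} to exclude the bad sign configurations in the hard case $k\notin\{i,j\}$---is exactly the structure of the paper's proof.

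However, your third paragraph contains a step that would fail as stated. You propose that, once the identity $l_{ij}^\bw=\epsilon_i^\bw\tau_j^\bw c_{ij}^\bw$ is in hand, the formula for $a_{ij}^\bw$ ``follows'' by substituting it into $a_{ij}^\bw = l_i^\bw A (l_j^\bw)^T$ and invoking the base-case GIM entries. Substituting gives $a_{ij}^\bw=\epsilon_i^\bw\epsilon_j^\bw\, c_i^\bw \bigl(D_\tau A D_\tau\bigr)(c_j^\bw)^T$, where $D_\tau$ is the diagonal sign matrix of $\tau^\bw$; reducing this expression, which involves the initial GIM and the full $c$-vectors, to $\pm|b_{ij}^\bw|$ with the stated sign is precisely the content of the theorem, not a consequence of expanding a bilinear form. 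Moreover, the two halves of the statement cannot be decoupled in the order you suggest: the inductive step for the $l$-identity at $\bw[k]$ already consumes the $a$-formula at $\bw$ (one needs $a_{ik}^\bw=-\epsilon_i^\bw\epsilon_k^\bw|b_{ik}^\bw|$ when $b_{ik}^\bw c_k^\bw\geq 0$ for the signs to cancel), so the $a$-formula must be carried through the same induction rather than derived afterwards. What the paper actually does is propagate $a_{ij}^{\bw[k]}$ a single step via the GIM mutation rule of Definition~\ref{def-GIM-mutation} (equivalently, expand $l_i^{\bw[k]}A(l_j^{\bw[k]})^T$ only one mutation step, never down to the base case), apply the inductive hypothesis to $a_{ij}^\bw$, $a_{ik}^\bw$, $a_{kj}^\bw$, and then use the sign lemmas together with the fork structure (e.g.\ $\sgn(b_{ij}^{\bw[k]})=\sgn(b_{jk}^{\bw[k]})$ because $k$ is the new point of return) to decide which of the two cases $b_{ij}^{\bw[k]}c_j^{\bw[k]}\geq 0$ or $\leq 0$ actually occurs. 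Since your fourth paragraph already identifies this case enumeration as the real work, the repair is to discard the bilinear-form shortcut and run that enumeration directly on the GIM mutation recursion.
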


\begin{proof}
    It trivially holds for $\bw = [\ell]$, so we just need to investigate $\bw[k]$.
    Assume then that our result holds for some $\bw$ ending in $r \neq k$.
    If $b_{ik}^\bw c_k^\bw \leq 0$ for $i \neq k$, then $l_i^{\bw[k]} = l_i^\bw$ and $c_i^{\bw[k]} = c_i^\bw$.
    For $i = k$, we have $l_i^{\bw[k]} = l_i^\bw$ and $c_i^{\bw[k]} = -c_i^\bw$.
    In both situations, this would force $l_{ij}^{\bw[k]} = \epsilon_i^{\bw[k]}\tau_j^{\bw[k]} c_{ij}^{\bw[k]}$ by the inductive hypothesis, as desired.
    We just need to check the case where $b_{ik}^\bw c_k^\bw \geq 0$.
    
    If $b_{ik}^\bw c_k^\bw \geq 0$, then $i \neq k$, $l_{ij}^{\bw[k]} = l_{ij}^\bw - a_{ik}^\bw l_{kj}^\bw$, and $c_{ij}^{\bw[k]} = c_{ij}^\bw + |b_{ik}^\bw| c_{kj}^\bw$.
    By the inductive hypothesis, we find that
    $$l_{ij}^{\bw[k]} = \epsilon_i^\bw \tau_j^\bw  c_{ij}^\bw - \epsilon_k^\bw \tau_j^\bw a_{ik}^\bw c_{kj}^\bw$$
    $$ = \epsilon_i^\bw \tau_j^\bw  c_{ij}^\bw + \epsilon_i^\bw \epsilon_k^\bw \epsilon_k^\bw \tau_j^\bw |b_{ik}^\bw| c_{kj}^\bw$$
    $$= \epsilon_i^\bw \tau_j^\bw  (c_{ij}^\bw + \epsilon_k^\bw \epsilon_k^\bw |b_{ik}^\bw| c_{kj}^\bw)$$
    $$= \epsilon_i^\bw \tau_j^\bw  (c_{ij}^\bw + |b_{ik}^\bw| c_{kj}^\bw)$$
    $$= \epsilon_i^{\bw[k]} \tau_j^{\bw[k]}  c_{ij}^{\bw[k]}.$$
    We are then left with the arduous task of proving that
    $$a_{ij}^{\bw[k]} = \begin{cases}
        2 & \text{if } i = j \\
        -\epsilon_i^{\bw[k]} \epsilon_j^{\bw[k]} |b_{ij}^{\bw[k]}| & \text{if } b_{ij}^{\bw[k]} c_j^{\bw[k]} \geq 0 \\
        -\epsilon_i^{\bw[k]} \epsilon_j^{\bw[k]} \sgn(c_i^{\bw[k]}) \sgn(c_j^{\bw[k]}) |b_{ij}^{\bw[k]}| & \text{if } b_{ij}^{\bw[k]} c_j^{\bw[k]} \leq 0 .
    \end{cases}$$

    The case of $i = j$ is already done, as shown in Lemma \ref{lem-fork-admissible-quadratic-form}.
    If $j = k$, we split into two cases.
    Either $b_{ik}^{\bw[k]} c_k^{\bw[k]} = b_{ik}^{\bw} c_k^{\bw} \geq0$ or $b_{ik}^{\bw[k]} c_k^{\bw[k]} = b_{ik}^{\bw} c_k^{\bw} \leq 0$.
    Then
    $$a_{ik}^{\bw[k]} = -\sgn(b_{ik}^\bw c_k^\bw) a_{ik}^\bw.$$
    In the first case, this reduces to
    $$a_{ik}^{\bw[k]} = \sgn(b_{ik}^\bw c_k^\bw) \epsilon_i^\bw \epsilon_k^\bw |b_{ik}^\bw|$$
    $$  = -\epsilon_i^{\bw[k]} \epsilon_k^{\bw[k]} |b_{ik}^{\bw[k]}|$$
    as expected.
    In the second case, we find that
    $$a_{ik}^{\bw[k]} = \sgn(b_{ik}^\bw c_k^\bw) \epsilon_i^\bw \epsilon_k^\bw \sgn(c_i^\bw) \sgn(c_k^\bw) |b_{ik}^\bw|$$
    $$  = - \epsilon_i^{\bw[k]} \epsilon_k^{\bw[k]} \sgn(c_i^{\bw[k]}) \sgn(c_k^{\bw[k]}) |b_{ik}^\bw|,$$
    again, as expected.
    If $i = k$ instead, then a similar argument holds.
    We may now assume that $k \notin \{i,j\}$ and $i \neq j$.
    
    If $b_{ik}^\bw b_{kj}^\bw \leq 0$, then $b_{ij}^{\bw[k]} = b_{ij}^\bw$ and $a_{ij}^{\bw[k]} = a_{ij}^\bw$.
    When $b_{ik}^{\bw} c_k^{\bw} \leq 0 $ and $b_{jk}^{\bw} c_k^{\bw} \leq 0 $, this forces 
    $$a_{ij}^{\bw[k]} =  - \epsilon_i^{\bw} \epsilon_j^{\bw} \sgn(c_i^{\bw}) \sgn(c_j^{\bw}) |b_{ij}^\bw|$$
    $$ =  - \epsilon_i^{\bw[k]} \epsilon_j^{\bw[k]} \sgn(c_i^{\bw[k]}) \sgn(c_j^{\bw[k]}) |b_{ij}^{\bw[k]}|$$
    if $b_{ij}^{\bw[k]} c_j^{\bw[k]} = b_{ij}^\bw c_j^\bw \leq 0$.
    Similarly, if $b_{ij}^{\bw[k]} c_j^{\bw[k]} = b_{ij}^\bw c_j^\bw \geq 0$, then
    $$a_{ij}^{\bw[k]} =  - \epsilon_i^{\bw} \epsilon_j^{\bw} |b_{ij}^\bw|$$
    $$ =  - \epsilon_i^{\bw[k]} \epsilon_j^{\bw[k]} |b_{ij}^{\bw[k]}|,$$
    as desired.
    
    When $b_{ik}^{\bw} c_k^{\bw} \geq0 $ and $b_{jk}^{\bw} c_k^{\bw} \geq0 $ instead, we know that $\sgn(c_i^{\bw[k]}) = \sgn(c_j^{\bw[k]}) = -\sgn(c_k^{\bw[k]})$ by Corollary \ref{cor-signs-por}.
    Additionally, if $\sgn(c_i^{\bw}) = \sgn(c_j^{\bw})$ or $b_{ij}^{\bw} c_j^{\bw} \geq0$, then 
    $$a_{ij}^{\bw[k]} = -\epsilon_i^\bw \epsilon_j^\bw |b_{ij}^{\bw}|$$
    $$ = -\epsilon_i^{\bw[k]} \epsilon_j^{\bw[k]} |b_{ij}^{\bw[k]}|$$
    as desired. %\KL{$\epsilon_i^\bw \epsilon_j^\bw \neq \epsilon_i^{\bw[k]} \epsilon_j^{\bw[k]}$  for the counterexample}
    If $b_{ij}^{\bw} c_j^{\bw} \leq 0$ and $\sgn(c_i^{\bw}) = -\sgn(c_j^{\bw}) = -\sgn(c_k^{\bw})$, then $b_{kj}^{\bw} c_j^{\bw} \leq 0$, $b_{ki}^{\bw} c_i^{\bw} \geq0$, and $b_{ki}^{\bw} b_{ij}^\bw > 0$, an impossible situation according to Lemma \ref{lem-signs-i-j-equal-k-not}. 
    If $b_{ij}^{\bw} c_j^{\bw} \leq 0$ and $\sgn(c_i^{\bw}) = -\sgn(c_j^{\bw}) = \sgn(c_k^{\bw})$, then $b_{ki}^{\bw} c_i^{\bw} \leq 0$, $b_{kj}^{\bw} c_j^{\bw} \geq0$, and $b_{kj}^{\bw} b_{ji}^\bw > 0$, another impossible situation according to Lemma \ref{lem-signs-i-j-equal-k-not}. 
    Hence, either $\sgn(c_i^\bw) = \sgn(c_j^\bw)$ or $b_{ij}^\bw c_j^\bw \geq 0$, showing our result for $b_{ik}^{\bw} b_{kj}^\bw \leq 0$.

    If $b_{ik}^{\bw} b_{kj}^{\bw} > 0$, then $\sgn(b_{ij}^{\bw[k]}) = \sgn(b_{jk}^{\bw[k]})$ as $k$ is the point of return of $\mu_{\bw[k]}(B)$ and
    $$a_{ij}^{\bw[k]} = a_{ij}^{\bw} - a_{ik}^\bw a_{kj}^\bw.$$
    Assume that $b_{ik}^{\bw} c_k^{\bw} \geq0 $ and $b_{jk}^{\bw} c_k^{\bw} \leq 0 $. 
    If $b_{ij}^{\bw} c_j^{\bw} \geq0$ as well, then 
    $$a_{ij}^{\bw[k]} = -\epsilon_i^\bw \epsilon_j^\bw |b_{ij}^{\bw}| - \epsilon_i^\bw \epsilon_k^\bw \sgn(c_j^\bw) \sgn(c_k^\bw) |b_{ik}^{\bw}| \epsilon_k^\bw \epsilon_j^\bw |b_{kj}^{\bw}|$$
    $$  = -\epsilon_i^\bw \epsilon_j^\bw (|b_{ij}^{\bw}| +   \sgn(c_j^\bw) \sgn(c_k^\bw) |b_{ik}^{\bw} b_{kj}^{\bw}|)$$
    $$  = -\epsilon_i^\bw \epsilon_j^\bw \sgn(c_j^\bw) ( \sgn(c_j^\bw)|b_{ij}^{\bw}| +    \sgn(b_{ik}^\bw) |b_{ik}^{\bw} b_{kj}^{\bw}|)$$
    $$  = -\epsilon_i^\bw \epsilon_j^\bw \sgn(c_j^\bw) b_{ij}^{\bw[k]}$$
    $$  = -\epsilon_i^{\bw[k]} \epsilon_j^{\bw[k]} \sgn(c_j^{\bw[k]}) \sgn(b_{ij}^{\bw[k]})  |b_{ij}^{\bw[k]}|.$$
    If $b_{ij}^{\bw[k]} c_j^{\bw[k]}  \geq 0$, then we are done.
    If $b_{ij}^{\bw[k]} c_j^{\bw[k]} \leq 0$, then $\sgn(c_j^{\bw[k]}) = \sgn(c_k^{\bw[k]})$ from $\sgn(b_{ij}^{\bw[k]}) = \sgn(b_{jk}^{\bw[k]})$ and $b_{jk}^{\bw[k]} c_k^{\bw[k]} \leq  0$.
    As $\sgn(c_i^{\bw[k]}) = -\sgn(c_k^{\bw[k]})$ by Corollary \ref{cor-signs-por}, we find that $\sgn(b_{ij}^{\bw[k]}) = \sgn(c_i^{\bw[k]})$, proving our result.

    If instead $b_{ij}^{\bw} c_j^{\bw} \leq 0$, then
    $$a_{ij}^{\bw[k]} = -\epsilon_i^\bw \epsilon_j^\bw \sgn(c_i^{\bw}) \sgn(c_j^{\bw}) |b_{ij}^{\bw}| - \epsilon_i^\bw \epsilon_k^\bw \sgn(c_j^\bw) \sgn(c_k^\bw) |b_{ik}^{\bw}| \epsilon_k^\bw \epsilon_j^\bw |b_{kj}^{\bw}|$$
    $$  = -\epsilon_i^\bw \epsilon_j^\bw \sgn(c_j^\bw) (\sgn(c_i^{\bw})|b_{ij}^{\bw}| +    \sgn(c_k^\bw) |b_{ik}^{\bw} b_{kj}^{\bw}|)$$
    $$  = -\epsilon_i^\bw \epsilon_j^\bw \sgn(c_j^\bw) ( \sgn(c_i^\bw)|b_{ij}^{\bw}| +    \sgn(b_{ik}^\bw) |b_{ik}^{\bw} b_{kj}^{\bw}|).$$
    If $\sgn(c_i^{\bw}) = \sgn(c_j^{\bw}) = \sgn(c_k^\bw)$, then $\sgn(b_{ij}^{\bw}) = \sgn(b_{jk}^{\bw}) = \sgn(b_{ki}^\bw)$.
    However, this implies that the subquiver induced by $\{i,j,k\}$ is 3-cycle, a contradiction of Lemma \ref{lem-signs-equal}.
    If $\sgn(c_i^{\bw}) = \sgn(c_j^{\bw}) = -\sgn(c_k^\bw)$, then $b_{ij}^{\bw} c_j^\bw \leq 0$, $b_{ik}^{\bw} b_{kj}^{\bw} > 0$, and $b_{ik}^{\bw} c_k^{\bw} \geq0$, a contradiction of Lemma \ref{lem-signs-i-j-equal-k-not}.
    Hence, we know that $\sgn(c_i^\bw) = -\sgn(c_j^\bw)$.
    Then we find that 
    $$ a_{ij}^{\bw[k]} = -\epsilon_i^\bw \epsilon_j^\bw \sgn(c_j^\bw) ( -\sgn(c_j^\bw)|b_{ij}^{\bw}| +    \sgn(b_{ik}^\bw) |b_{ik}^{\bw} b_{kj}^{\bw}|)$$
    $$ = -\epsilon_i^\bw \epsilon_j^\bw \sgn(c_j^\bw) (b_{ij}^{\bw} +    \sgn(b_{ik}^\bw) |b_{ik}^{\bw} b_{kj}^{\bw}|)$$
    $$ = -\epsilon_i^{\bw[k]} \epsilon_j^{\bw[k]} \sgn(c_j^\bw) b_{ij}^{\bw[k]}$$
    $$ = -\epsilon_i^{\bw[k]} \epsilon_j^{\bw[k]} \sgn(c_j^{\bw[k]}) \sgn(b_{ij}^{\bw[k]}) |b_{ij}^{\bw[k]}|.$$
    If $b_{ij}^{\bw[k]} c_j^{\bw[k]} \geq 0$, then we are done.
    If $b_{ij}^{\bw[k]} c_j^{\bw[k]} \leq 0$ and $\sgn(c_i^{\bw[k]}) = -\sgn(c_j^{\bw[k]})$, then we are also done.
    Finally, if $b_{ij}^{\bw[k]} c_j^{\bw[k]} \leq 0$ and $\sgn(c_i^{\bw[k]}) = \sgn(c_j^{\bw[k]})$, then 
    $$\sgn(c_i^{\bw[k]}) = \sgn(c_j^{\bw[k]}) = -\sgn(c_k^{\bw[k]}),$$
    $b_{ik}^{\bw[k]} b_{kj}^{\bw[k]} > 0$, and $b_{ik}^{\bw[k]} c_k^{\bw[k]} \geq 0$, a contradiction of Lemma \ref{lem-signs-i-j-equal-k-not}.
    A similar argument holds when $b_{ik}^{\bw} c_k^{\bw} \leq 0$ and $b_{jk}^{\bw} c_k^{\bw} \geq0 $, completing the inductive step.

    Therefore, for any reduced mutation sequence $\bw$ not beginning with $r$ and $i,j \in \mathcal{I}$, we have that $l_{ij}^\bw = \epsilon_i^\bw \tau_j^\bw c_{ij}^\bw$ and that
    $$a_{ij}^\bw = \begin{cases}
        2 & \text{if } i = j \\
        -\epsilon_i^\bw \epsilon_j^\bw |b_{ij}^\bw| & \text{if } b_{ij}^\bw c_j^\bw \geq 0 \\
        -\epsilon_i^\bw \epsilon_j^\bw \sgn(c_i^\bw) \sgn(c_j^\bw) |b_{ij}^\bw| & \text{if } b_{ij}^\bw c_j^\bw \leq 0. 
    \end{cases}$$
\end{proof}

\section{Proof of Theorem \ref{thm-coxeter-element}}

To prove Theorem \ref{thm-coxeter-element}, we need to first note the careful structure that Proposition \ref{prop-control-signs-fork} gives to a fork and its $c$-vectors.

\begin{Lem} \label{lem-last-green-vertex}
    Let $B$ be a fork and $\bw$ a non-trivial fork-preserving mutation sequence ending in $r$.
    Then either there exists a vertex $v \neq r$ in $B^\bw$ such that
    \begin{itemize}
        \item $\sgn(c_v^{\bw}) > 0$,

        \item $\sgn(c_i^{\bw}) > 0$ for all $i \neq r$ such that $b_{iv}^\bw > 0$,

        \item $\sgn(c_j^{\bw}) < 0$ for all $j \neq r$ such that $b_{vj}^\bw > 0$,
    \end{itemize}
    or $c_r^\bw$ is the only $c$-vector with positive sign.
    
    We refer to $v$ as the \textit{last green vertex} of $B^\bw$.
    If $c_r^\bw$ is the only $c$-vector with positive sign, we refer to $r$ as the last green vertex.
\end{Lem}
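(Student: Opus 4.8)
The plan is to split into two cases according to whether any \emph{interior} vertex (a vertex other than the point of return $r$) is green, and to combine the acyclic structure of the interior of a fork with the sign constraints of Proposition~\ref{prop-control-signs-fork}. Throughout I call a vertex $i$ \emph{green} if $c_i^\bw > 0$ and \emph{red} if $c_i^\bw < 0$; since $c$-vectors are nonzero and sign-coherent, every vertex is exactly one of these.

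First I would record the structure of $B^\bw$. As $\bw$ is fork-preserving and ends in $r$, the matrix $B^\bw$ is a fork with point of return $r$. By the definition of a fork the subquivers induced by $F^-(r)$ and $F^+(r)$ are acyclic, and the inequalities $f_{ji} > f_{ir} > 0$ force every arrow between $F^-(r)$ and $F^+(r)$ to point from $F^+(r)$ into $F^-(r)$; hence the full subquiver on $\mathcal I \setminus \{r\}$ is acyclic, and every oriented cycle of $B^\bw$ passes through $r$. Since $B^\bw$ is non-acyclic, this also gives $F^+(r) \neq \emptyset$ and $F^-(r) \neq \emptyset$. Next I would observe that the third bullet of Proposition~\ref{prop-control-signs-fork} forbids an arrow from a red interior vertex to a green interior vertex: if there were an arrow $i \to j$ with $i,j \neq r$ and $c_j^\bw > 0$, then $b_{ij}^\bw c_j^\bw > 0$ would force $\sgn(c_i^\bw) = \sgn(c_j^\bw) > 0$. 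Consequently the set $G := \{\, i \neq r : c_i^\bw > 0 \,\}$ of green interior vertices is closed under taking predecessors in the acyclic interior.

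In the case $G \neq \emptyset$ I would take $v$ to be a sink of the subquiver induced by $G$, which exists because this subquiver is acyclic. Then $\sgn(c_v^\bw) > 0$ gives the first bullet; closure of $G$ under predecessors gives $\sgn(c_i^\bw) > 0$ for every in-neighbor $i \neq r$ of $v$, giving the second bullet; and the fact that $v$ is a sink of $G$ means every out-neighbor $j \neq r$ of $v$ lies outside $G$, hence is red, giving the third bullet. Thus $v$ is a last green vertex. In the case $G = \emptyset$ every interior $c$-vector is red, and I would show $c_r^\bw > 0$, so that $c_r^\bw$ is the unique green $c$-vector: choosing any $j \in F^+(r)$ we have $b_{rj}^\bw > 0$ and $c_j^\bw < 0$, so $b_{rj}^\bw c_j^\bw \leq 0$, and the second bullet of Proposition~\ref{prop-control-signs-fork} yields $\sgn(c_j^\bw) = -\sgn(c_r^\bw)$; since $\sgn(c_j^\bw) = -1$ this forces $\sgn(c_r^\bw) = 1$.

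The entire sign-theoretic content is supplied by Proposition~\ref{prop-control-signs-fork}; the only step that needs genuine care is the structural first paragraph. I expect the acyclicity of the interior of the fork to be the main point to pin down, since it is precisely what guarantees the existence of the green sink $v$ in the case $G \neq \emptyset$, together with the observation that $F^+(r) \neq \emptyset$, which is what lets me close the case $G = \emptyset$. Everything after that is a direct reading of the three sign rules.
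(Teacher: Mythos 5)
Your proof is correct and takes essentially the same route as the paper: the same case split on whether a green vertex exists off the point of return, with your sink of the green subquiver $G$ coinciding with the paper's choice of the maximal green vertex in the acyclic ordering of $B^\bw \setminus \{r\}$, and both bullets verified by the same applications of Proposition~\ref{prop-control-signs-fork}. The only cosmetic differences are that you spell out the structural facts (acyclicity of the interior, $F^\pm(r) \neq \emptyset$) that the paper leaves implicit, and in the all-red case you use an arbitrary $j \in F^+(r)$ where the paper uses the source of $B^\bw \setminus \{r\}$, which lies in $F^+(r)$ anyway.
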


\begin{proof}
    First suppose that $\sgn(c_i^\bw) < 0$ for all $i \neq r$. Let $j$ be the source of $B^\bw \setminus \{r\}$. Then $b_{rj}^{\bw} > 0$ by Definition~\ref{def-forks1}, so $b_{rj}^\bw c_j^\bw \leq 0$.
    As such, Proposition \ref{prop-control-signs-fork} tells us that $$\sgn(c_r^\bw) = -\sgn(c_j^\bw) > 0,$$hence $c_r^\bw$ is the only $c$-vector with positive sign. 
    
    Next suppose that there is at least one vertex that is not the point of return with positive $c$-vector.
    Let $v$ be the maximal vertex in the acyclic ordering of $B^\bw$ with positive $c$-vector.
    Then $b_{iv}^\bw > 0$ for some vertex $i \neq r$ implies $i\neq v$, $b_{iv}^\bw c_v^\bw \geq 0$, and 
    $$\sgn(c_i^\bw) = \sgn(c_v^\bw) > 0$$
    by Proposition \ref{prop-control-signs-fork}.
    If $b_{vj}^\bw > 0$ for some vertex $j \neq r$, then $v \prec j$.
    As $v$ is maximal with respect to the acyclic ordering, this forces $\sgn(c_j^\bw) < 0$.
    Thus our result is shown.
\end{proof}

Lemma \ref{lem-last-green-vertex} culminates in the following corollary.

\begin{Cor} \label{cor-ordering-signs-cycle}
    Let $B$ be a fork and $\bw$ a non-trivial fork-preserving mutation sequence ending in $r$.
    Further, let $F = \mu_\bw(B)$ and $r \prec v_{n-1} \prec \dots \prec v_1$ be the linear ordering of $F$ given in Corollary \ref{cor-fork-admissible-ordering}.
    Then there exists a cyclic permutation of the linear ordering of $F$,
    $$v_j \prec v_{j-1} \prec \dots \prec v_1 \prec r  \prec v_{n-1} \prec v_{n-2} \prec \dots \prec v_{j+1},$$
    where all the vertices preceding $r$ have positive $c$-vectors, all the vertices succeeding $r$ have negative $c$-vectors, and at least one of the vertices in $F$ has a positive $c$-vector.
    Equivalently, the vertex $v_j$ is the last green vertex of $F$.
\end{Cor}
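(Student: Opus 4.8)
The plan is to read off the desired cyclic permutation directly from the dichotomy furnished by Lemma~\ref{lem-last-green-vertex}, and to use the abundance of a fork to upgrade the three local conditions on the last green vertex into global statements about the signs of all $c$-vectors.

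First I would dispose of the degenerate alternative of Lemma~\ref{lem-last-green-vertex}: if $c_r^\bw$ is the only $c$-vector with positive sign, then $r$ is itself the last green vertex, and I take the original ordering $r \prec v_{n-1} \prec \dots \prec v_1$, in which no vertex precedes $r$. The requirement that every vertex preceding $r$ has a positive $c$-vector then holds vacuously, every $v_i$ succeeding $r$ has a negative $c$-vector by hypothesis, and $r$ itself witnesses the final clause that at least one $c$-vector is positive.

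In the main case, let $v=v_j$ be the last green vertex produced by Lemma~\ref{lem-last-green-vertex}, so that $\sgn(c_{v_j}^\bw)>0$, every vertex $i\neq r$ admitting an arrow $i\to v_j$ satisfies $\sgn(c_i^\bw)>0$, and every vertex $i\neq r$ admitting an arrow $v_j\to i$ satisfies $\sgn(c_i^\bw)<0$. The key observation is that $F=\mu_\bw(B)$ is again a fork, hence abundant and in particular complete, so $v_j$ is joined to every other vertex by at least one arrow; thus each $v_i$ with $i\neq j$ is adjacent to $v_j$ and is covered by exactly one of the two arrow conditions above. I would then invoke that $v_1\prec v_2\prec\dots\prec v_{n-1}$ is the acyclic ordering of $F\setminus\{r\}$ coming from Corollary~\ref{cor-fork-admissible-ordering}: by Definition~\ref{def-acyclic-ordering}, any arrow between $v_i$ and $v_j$ is directed from the smaller index toward the larger. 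Hence for $i<j$ the unique arrow is $v_i\to v_j$, forcing $\sgn(c_{v_i}^\bw)>0$, whereas for $i>j$ it is $v_j\to v_i$, forcing $\sgn(c_{v_i}^\bw)<0$.

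Assembling these signs, the vertices $v_j,v_{j-1},\dots,v_1$ all have positive $c$-vectors while $v_{n-1},\dots,v_{j+1}$ all have negative $c$-vectors, so the cyclic permutation
$$v_j \prec v_{j-1} \prec \dots \prec v_1 \prec r \prec v_{n-1} \prec \dots \prec v_{j+1}$$
places precisely the positive-sign vertices before $r$ and the negative-sign vertices after $r$, with $v_j$ as last green vertex, as required. I expect the only genuine subtlety to be the bookkeeping around completeness: one must confirm that abundance is inherited by $\mu_\bw(B)$ (so that the two adjacency conditions of Lemma~\ref{lem-last-green-vertex} together cover every vertex other than $v_j$ with no gaps), after which the argument reduces to matching arrow directions against the acyclic ordering.
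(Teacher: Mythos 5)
Your proposal is correct and follows essentially the same route the paper intends: the paper presents Corollary~\ref{cor-ordering-signs-cycle} as an immediate consequence of Lemma~\ref{lem-last-green-vertex}, and your argument fleshes out exactly that step, using the abundance (hence completeness) of the fork $F=\mu_\bw(B)$ together with the acyclic ordering on $F\setminus\{r\}$ to convert the lemma's local adjacency conditions at the last green vertex $v_j$ into the global sign dichotomy, with the degenerate case handled by the trivial cyclic permutation. The ``subtlety'' you flag is not an issue, since fork-preserving mutation sequences produce forks and forks are abundant by definition.
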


A straightforward application of Proposition \ref{prop-control-signs-fork} then gives the following corollary of how the last green vertex is modified under mutation.

\begin{Cor} \label{cor-last-green-vertex-mutation}
    Let $B$ be a fork and $\bw$ a non-trivial fork-preserving mutation sequence ending in $r$.
    Further, let $F = \mu_\bw(B)$ and $r \prec v_{n-1} \prec \dots \prec v_1$ be the linear ordering of $F$ given in Corollary \ref{cor-fork-admissible-ordering}.
    Let $v_j$ be the last green vertex of $F$ given in Lemma \ref{lem-last-green-vertex}---if there are vertices with positive $c$-vector other than the point of return.
    If $v \neq r$ has $\sgn(c_v^\bw) > 0$, then the last green vertex of $F' = \mu_v(F)$ is
    \begin{itemize}
        \item $v_j$ if $v_j \neq v$;

        \item $v_{j-1}$ if $v_j = v$ and $j > 1$;

        \item $r$ if $v_j = v$ and $j = 1$.
    \end{itemize}
    If $v \neq r$ has $\sgn(c_v^\bw) < 0$, then the last green vertex of $F' = \mu_v(F)$ is
    \begin{itemize}
        \item $v_j$ if $F$ has vertices with positive $c$-vector other than the point of return;

        \item $r$ if $b_{rv}^\bw > 0$ and $c_r^\bw$ was the only positive $c$-vector;

        \item $v$ if $b_{rv}^\bw < 0$ and $c_r^\bw$ was the only positive $c$-vector.
    \end{itemize}
\end{Cor}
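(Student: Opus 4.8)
The plan is to apply the characterization of the last green vertex from Lemma~\ref{lem-last-green-vertex} directly to the mutated fork $F' = \mu_v(F)$. This requires two pieces of data about $F'$: its sign-vector and its arrows. I would read off the former from the $c$-vector mutation rule of Definition~\ref{def_of_mut_seq} together with sign-coherence, and the latter from the quiver mutation rule; the sign information is then refined using Proposition~\ref{prop-control-signs-fork} and Corollary~\ref{cor-signs-por}. Since $v \neq r$, the sequence remains fork-preserving and $F'$ is again a fork with point of return $r$, so Lemma~\ref{lem-last-green-vertex} indeed applies to $F'$.

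First I would record the sign-change mechanism. Under $\mu_v$ the only $c$-vectors whose sign can change are $c_v^\bw$, which always flips, and those $c_i^\bw$ with $b_{iv}^\bw c_v^\bw > 0$, for which $c_i^{\bw[v]} = c_i^\bw + |b_{iv}^\bw|\, c_v^\bw$; every other $c$-vector (those with $b_{iv}^\bw c_v^\bw \le 0$) is unchanged. Combined with Corollary~\ref{cor-ordering-signs-cycle}, which says the green vertices form the segment $v_j \prec \dots \prec v_1$ immediately preceding $r$ in a cyclic reordering of the acyclic ordering, this localizes the effect of the mutation to the neighbourhood of $v$ and identifies exactly which vertices can switch colour.

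Then I would treat the two principal cases. If $\sgn(c_v^\bw) > 0$, then $v$ lies in the green segment and turns red. When $v \ne v_j$, the only possible modification to $c_{v_j}$ adds a positive multiple of $c_v^\bw$, so $v_j$ stays green, and I verify it keeps the two boundary conditions of Lemma~\ref{lem-last-green-vertex} in $F'$, so it remains the last green vertex. When $v = v_j$ with $j > 1$, the vertex $v_{j-1}$ remains green and I check it inherits the boundary conditions in $F'$, using the fork axioms (Definition~\ref{def-forks1}) to control the arrows created at $v_j$. When $v = v_j$ with $j = 1$, no non-return green vertex survives, and the first paragraph of the proof of Lemma~\ref{lem-last-green-vertex} shows $c_r$ is then the unique positive $c$-vector. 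If instead $\sgn(c_v^\bw) < 0$, then $v$ turns green. When $F$ has a non-return green vertex $v_j$, the third bullet of Proposition~\ref{prop-control-signs-fork} forbids $b_{v_j v}^\bw c_v^\bw > 0$ (it would force $\sgn(c_{v_j}^\bw) = \sgn(c_v^\bw)$, contradicting their opposite colours), so $c_{v_j}$ and its relevant neighbourhood are untouched and $v_j$ stays the last green vertex. When $c_r^\bw$ was the only positive $c$-vector, I split on $\sgn(b_{rv}^\bw)$, that is, on whether $v \in F^+(r)$ or $v \in F^-(r)$, and use Corollary~\ref{cor-signs-por} to decide whether $r$ remains the maximal green vertex or is superseded by the newly green $v$.

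The main obstacle will be the case $v = v_j$ (green, $j>1$): there one must simultaneously track the sign-vector of $F'$ and the arrows produced by mutating at the boundary green vertex, and confirm that the candidate $v_{j-1}$ satisfies all three conditions of Lemma~\ref{lem-last-green-vertex}. Everywhere else the conclusion is, as the surrounding text promises, a direct application of Proposition~\ref{prop-control-signs-fork} and Corollary~\ref{cor-signs-por}.
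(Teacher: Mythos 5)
Your plan rests on a factual error that breaks the whole framework: you assert that $F' = \mu_v(F)$ ``is again a fork with point of return $r$.'' It is not. By Warkentin's fork-mutation theorem \cite{warkentin_exchange_2014}---the fact that makes fork-preserving sequences meaningful, and which the paper uses explicitly elsewhere (e.g.\ in the proof of Theorem~\ref{thm-a-equal-b-sign}, ``$k$ is the point of return of $\mu_{\bw[k]}(B)$,'' and in the proof of Lemma~\ref{lem-signs-i-j-equal-k-not})---mutating a fork at any vertex other than its point of return yields a fork whose point of return is the vertex just mutated. So $F'$ has point of return $v$, and ``the last green vertex of $F'$'' must be read through Lemma~\ref{lem-last-green-vertex} applied to the sequence $\bw[v]$, which ends in $v$: the two boundary conditions quantify over vertices $i,j \neq v$, the relevant acyclic ordering is that of $F' \setminus \{v\}$ (Lemma~\ref{lem-fork-mutate-order}, Corollary~\ref{cor-ordering-permutation-colors}), and the fallback case is ``$c_v^{\bw[v]}$ is the only positive $c$-vector.''

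This is not a cosmetic slip; under your reading several cases of the corollary become false or do not even parse. Take $\sgn(c_v^\bw)<0$ with $b_{rv}^\bw>0$ and $c_r^\bw$ the only positive $c$-vector: after mutation $c_v^{\bw[v]}=-c_v^\bw>0$ while $c_r$ is unchanged (since $b_{rv}^\bw c_v^\bw<0$), so $F'$ has \emph{two} positive $c$-vectors; your reading of ``the last green vertex is $r$'' (namely, $c_r$ is the only positive one) is then false, whereas the correct reading---$r$ satisfies the three bullets of Lemma~\ref{lem-last-green-vertex} relative to the new point of return $v$---holds exactly because Lemma~\ref{lem-fork-mutate-order} makes $r$ the source of $F'\setminus\{v\}$. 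In the companion case $b_{rv}^\bw<0$ the claimed answer is $v$ itself, i.e.\ the \emph{new point of return}: this is the fallback case of Lemma~\ref{lem-last-green-vertex} (Corollary~\ref{cor-signs-por} gives $\sgn(c_v^\bw)c_v^\bw \geq \sgn(c_r^\bw)c_r^\bw$, and abundance gives $|b_{rv}^\bw|\geq 2$, so $c_r^{\bw[v]} = c_r^\bw + |b_{rv}^\bw|c_v^\bw$ turns non-positive), and it has no meaning at all in a framework where $r$ is still the point of return. Finally, your red-$v$ case claim that $v_j$'s ``relevant neighbourhood is untouched'' is too quick: forks are complete, so mutating at $v$ alters arrows among all pairs of its neighbours, and verifying the bullets for $v_j$ in $F'$ also requires knowing $\sgn(c_r^\bw)>0$ when $v\in F^+(r)$ (this follows from the second bullet of Proposition~\ref{prop-control-signs-fork} applied to the red vertex $v$, since $b_{rv}^\bw c_v^\bw\leq 0$), because $r$ then precedes $v_j$ in the new acyclic ordering. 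For comparison, the paper records this corollary without a written proof, as a direct consequence of Proposition~\ref{prop-control-signs-fork}; any correct write-up, however, must be carried out relative to the new point of return $v$, which is precisely what your proposal misses.
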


We then restate a lemma from a previous work by one of the authors, which results in an immediate corollary for our purposes.

\begin{Lem}{\cite[Lemma 3.4]{ervin_all_2024}}\label{lem-fork-mutate-order}
    Let $F$ be a fork with point of return $r$, where $v_1 \prec v_2 \prec \dots \prec v_{n-1}$ is the unique acyclic ordering on $F \setminus \{r\}$.
    For some vertex $v_j$, let $F' = \mu_{v_j}(F)$.
    Then $F' \setminus \{v_j\}$ has unique acyclic ordering $r \prec v_1 \prec \dots \prec v_{j-1} \prec v_{j+1} \prec \dots \prec v_{n-1}$ if $v_j \in F^+(r)$ and $ v_1 \prec \dots \prec v_{j-1} \prec v_{j+1} \prec \dots \prec v_{n-1} \prec r$ if $v_j \in F^-(r) $.
\end{Lem}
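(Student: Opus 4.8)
The plan is to read everything off the fork structure of $F$ together with the domination inequalities in Definition~\ref{def-forks1}. Write $P = F^{+}(r)$ and $N = F^{-}(r)$, so the non-return vertices split as $P \sqcup N$; since the dominant return arrow runs from each $j \in P$ to each $i \in N$, the unique acyclic order $v_1 \prec \cdots \prec v_{n-1}$ on $F \setminus \{r\}$ lists all of $P$ (in its internal acyclic order) before all of $N$. Because mutating a fork at a non-return vertex again yields a fork \cite{warkentin_exchange_2014}, $F'$ is abundant, hence complete; so between any two vertices of $F' \setminus \{v_j\}$ there is a nonzero net arrow, and once an acyclic order is exhibited it is automatically the unique one. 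I would also dispose of the case $v_j \in N$ at the outset by passing to the opposite quiver $F^{\mathrm{op}}$ (reverse every arrow): this swaps $F^{+}(r) \leftrightarrow F^{-}(r)$, reverses the acyclic order, and commutes with $\mu_{v_j}$, so the $N$-statement is exactly the $P$-statement applied to $F^{\mathrm{op}}$.

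So assume $v_j \in P$, i.e.\ $r \to v_j$, and let $f_{xy}$ denote the number of arrows $x \to y$. The key claim is that in $F' = \mu_{v_j}(F)$ the vertex $r$ becomes a source of $F' \setminus \{v_j\}$, that is, $r \to v_i$ for every $i \neq j$. For $v_i \in P$ this is immediate from the rule~\eqref{eqn-mmuu}: the only $2$-path through $v_j$ touching the $r$--$v_i$ arrow is $r \to v_j \to v_i$, which can only \emph{add} arrows in the direction $r \to v_i$, leaving $\sgn(b_{r v_i})$ unchanged. For $v_i \in N$ the original arrow is $v_i \to r$, and here the mutation must genuinely flip it: using the dominant return arrow $v_j \to v_i$, the path $r \to v_j \to v_i$ contributes $f_{r v_j} f_{v_j v_i}$ arrows $r \to v_i$, so the new $(r,v_i)$ entry equals $-f_{v_i r} + f_{r v_j} f_{v_j v_i}$. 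This is where I would invoke the fork inequalities $f_{v_j v_i} > f_{v_i r}$ and $f_{v_j v_i} > f_{r v_j}$ from Definition~\ref{def-forks1} together with abundance $f_{r v_j} \geq 2$: they give $f_{r v_j} f_{v_j v_i} \geq 2 f_{v_j v_i} > f_{v_i r}$, so the net arrow is indeed $r \to v_i$. Hence $r$ is a source of $F' \setminus \{v_j\}$ and must come first.

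It then remains to check that the relative order of the surviving non-return vertices is unchanged, i.e.\ for $i,k \neq j$ with $i < k$ one still has $v_i \to v_k$ in $F'$. By~\eqref{eqn-mmuu}, the $(v_i,v_k)$ entry can only be \emph{decreased} by a $2$-path through $v_j$ when $b_{v_i v_j} < 0$ and $b_{v_j v_k} < 0$, that is, when $v_j \to v_i$ and $v_k \to v_j$ simultaneously; but $v_j \to v_i$ forces $j < i$ and $v_k \to v_j$ forces $k < j$, contradicting $i < k$. Thus no subtraction occurs, every surviving arrow still points forward, and the order $r \prec v_1 \prec \cdots \prec v_{j-1} \prec v_{j+1} \prec \cdots \prec v_{n-1}$ is acyclic, hence (by completeness) the unique acyclic order on $F' \setminus \{v_j\}$. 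The $N$-case follows by the opposite-quiver reduction, placing $r$ last instead.

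I expect the main obstacle to be the two sign computations of the second paragraph: verifying that the arrows induced between $r$ and the opposite-side vertices dominate and really reverse the originals is exactly the step that consumes the defining fork inequalities, whereas the order-preservation among the other vertices is a clean index argument. A small bookkeeping point I would be careful with is to apply~\eqref{eqn-mmuu} to the \emph{signed} entries $b_{xy}$ rather than to the unsigned counts $f_{xy}$, so that the cancellation of opposing arrows (which is what produces the flips) is tracked correctly.
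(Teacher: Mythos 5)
Your proof is correct, but note that there is no in-paper proof to compare it against: the paper does not prove this lemma, it simply restates it from \cite[Lemma 3.4]{ervin_all_2024}. Your argument therefore serves as a self-contained verification from Definition~\ref{def-forks1} and the mutation rule \eqref{eqn-mmuu}, and every step checks out under the ordering convention of Definition~\ref{def-acyclic-ordering} (arrows point from $\prec$-smaller to $\prec$-larger, so $F^{+}(r)$ indeed precedes $F^{-}(r)$). The two computations that carry the content are both right: for $v_i \in F^{+}(r)$ the only composable $2$-path $r \to v_j \to v_i$ can only add arrows, and for $v_i \in F^{-}(r)$ the new entry $-f_{v_i r} + f_{r v_j} f_{v_j v_i}$ is positive already from $f_{v_j v_i} > f_{v_i r}$ and $f_{r v_j} \geq 1$ (abundance is not even needed there). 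The index argument for order-preservation among the surviving non-return vertices is clean and, usefully, independent of which side $v_j$ lies on. The opposite-quiver reduction for the $F^{-}(r)$ case is legitimate: $F^{\mathrm{op}}$ is a fork with the same point of return and swapped sides, $\mu_{v_j}$ commutes with taking opposites at the level of exchange matrices ($B \mapsto -B$), and acyclic orderings reverse. One small remark: your appeal to \cite{warkentin_exchange_2014} for abundance of $F'$ is not circular (that result is external to the cited lemma), but it is also dispensable, since your own sign computations show every pair of vertices in $F' \setminus \{v_j\}$ is joined by arrows, which is all that the uniqueness-of-topological-order argument requires.
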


\begin{Cor} \label{cor-ordering-permutation-colors}
    Let $B$ be a fork and $\bw$ a non-trivial fork-preserving mutation sequence ending in $r$.
    Further, let $F = \mu_\bw(B)$ and $r \prec v_{n-1} \prec \dots \prec v_1$ be the linear ordering of $F$ given in Corollary \ref{cor-fork-admissible-ordering}.
    Then $F' = \mu_{v_k}(F)$ has linear ordering 
    $v_k \prec v_{n-1} \prec \dots \prec v_{k+1} \prec v_{k-1} \prec \dots \prec v_1 \prec r $ if $v_j \in F^+(r)$  and linear ordering $v_k \prec r \prec v_{n-1} \prec \dots \prec v_{k+1} \prec v_{k-1} \prec \dots \prec v_1 $ if $v_j \in F^-(r) $.
\end{Cor}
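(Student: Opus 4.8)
The plan is to reduce the statement to Lemma~\ref{lem-fork-mutate-order} combined with the recipe of Corollary~\ref{cor-fork-admissible-ordering}, the only genuinely new input being the standard fact (from Warkentin's work) that mutating a fork away from its point of return produces a new fork whose point of return is the mutated vertex. Thus $F' = \mu_{v_k}(F)$ is a fork with point of return $v_k$; in particular $F' \setminus \{v_k\}$ is acyclic. So to read off the admissible linear ordering of $F'$ given by Corollary~\ref{cor-fork-admissible-ordering}, I only need the unique acyclic ordering of $F' \setminus \{v_k\}$, which I then reverse and to which I prepend the new point of return $v_k$.

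First I would fix conventions. By Corollary~\ref{cor-fork-admissible-ordering}, saying that the admissible ordering of $F$ is $r \prec v_{n-1} \prec \dots \prec v_1$ is equivalent to saying that the unique acyclic ordering on $F \setminus \{r\}$ is $v_1 \prec v_2 \prec \dots \prec v_{n-1}$. This is exactly the hypothesis format of Lemma~\ref{lem-fork-mutate-order}, so I may apply that lemma with the mutated vertex taken to be $v_k$.

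Then I would split into the two cases. If $v_k \in F^+(r)$, Lemma~\ref{lem-fork-mutate-order} gives the acyclic ordering of $F' \setminus \{v_k\}$ as $r \prec v_1 \prec \dots \prec v_{k-1} \prec v_{k+1} \prec \dots \prec v_{n-1}$; reversing this and prepending $v_k$ yields $v_k \prec v_{n-1} \prec \dots \prec v_{k+1} \prec v_{k-1} \prec \dots \prec v_1 \prec r$, the claimed ordering. If instead $v_k \in F^-(r)$, the lemma gives $v_1 \prec \dots \prec v_{k-1} \prec v_{k+1} \prec \dots \prec v_{n-1} \prec r$; reversing and prepending $v_k$ yields $v_k \prec r \prec v_{n-1} \prec \dots \prec v_{k+1} \prec v_{k-1} \prec \dots \prec v_1$, again as claimed. (Here the apparent reference to $v_j$ in the statement should read $v_k$.)

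The computation is pure bookkeeping, so the only point demanding care is the index reversal: Corollary~\ref{cor-fork-admissible-ordering} builds the admissible ordering from the \emph{reverse} of the acyclic ordering of the non-return part, and one must track where $r$ lands once $v_k$ is extracted and promoted to point of return---at the very bottom when $v_k \in F^+(r)$, and immediately after $v_k$ when $v_k \in F^-(r)$. I expect this reindexing, rather than any substantive argument, to be the main place an error could slip in; otherwise the corollary is an immediate consequence of the two cited results.
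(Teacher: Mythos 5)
Your proof is correct and is exactly the argument the paper intends: the paper gives no separate proof of this corollary, declaring it an ``immediate'' consequence of Lemma~\ref{lem-fork-mutate-order} together with the recipe of Corollary~\ref{cor-fork-admissible-ordering} (reverse the unique acyclic ordering of the non-return part and prepend the new point of return, which is the mutated vertex by Warkentin's fork-preservation result). Your bookkeeping in both cases matches the stated orderings, and you are right that the occurrences of $v_j$ in the statement are typos for $v_k$.
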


We now can proceed to a proof of Theorem \ref{thm-coxeter-element}.

\begin{Thm}[Theorem \ref{thm-coxeter-element}]
    Let $B$ be a fork and $\bw$ a non-trivial fork-preserving mutation sequence.
    We have that 
    $$\mathbf{r}_{\lambda(1)}^\bw ... \mathbf{r}_{\lambda(n)}^\bw = \mathbf{r}_{\rho(1)}...\mathbf{r}_{\rho(n)}$$
    for some permutations $\lambda, \rho \in\mathfrak{S}_n$, where $\lambda(i)$ is determined by the $i$th element of the ordering of $F = \mu_\bw(B)$ given in Corollary \ref{cor-ordering-signs-cycle} and $\rho$ is determined by the first mutation of $\bw$.
\end{Thm}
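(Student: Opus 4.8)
The plan is to prove, by induction on the length $\ell$ of $\bw$, the stronger invariance statement that the ordered product
$$P^\bw := \mathbf{r}_{\lambda(1)}^\bw\,\mathbf{r}_{\lambda(2)}^\bw\cdots\mathbf{r}_{\lambda(n)}^\bw,$$
with $\lambda=\lambda^\bw$ the sign-cycle ordering of $F=\mu_\bw(B)$ furnished by Corollary~\ref{cor-ordering-signs-cycle}, is unchanged when $\bw$ is extended by one more fork-preserving mutation, i.e. $P^{\bw[v]}=P^\bw$. Granting this, $P^\bw$ depends only on the first letter of $\bw$, and evaluating at the length-one sequence identifies it with a product $\mathbf{r}_{\rho(1)}\cdots\mathbf{r}_{\rho(n)}$ of the initial simple reflections, where $\rho$ is read off from that first mutation. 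The base case $\ell=1$ would be a direct computation of $C^{[v_1]}$ and of the resulting sign-cycle order, which pins down $\rho$ according to whether the first mutation vertex lies in $B^+(r)$ or $B^-(r)$.

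For the inductive step I would pass from $\bw$ to $\bw[v]$ with $v\neq r$, tracking two effects simultaneously. First, by Definition~\ref{def-r}, each reflection is either fixed or conjugated by $\mathbf{r}_v^\bw$: one has $\mathbf{r}_i^{\bw[v]}=\mathbf{r}_v^\bw\mathbf{r}_i^\bw\mathbf{r}_v^\bw$ exactly when $b_{iv}^\bw c_v^\bw>0$, and $\mathbf{r}_i^{\bw[v]}=\mathbf{r}_i^\bw$ otherwise, while $\mathbf{r}_v^{\bw[v]}=\mathbf{r}_v^\bw$. Second, by Corollaries~\ref{cor-last-green-vertex-mutation} and \ref{cor-ordering-permutation-colors}, the sign-cycle ordering is rewritten: $v$ becomes the point of return of $F'=\mu_v(F)$, the last green vertex is updated, and $\lambda^{\bw[v]}$ is the corresponding cyclic rotation of $\lambda^\bw$. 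The decisive algebraic point is that $\mathcal W$ is a free product of copies of $\mathbb Z/2$, so conjugations telescope: if the factors the rotation carries $v$ past are exactly the factors conjugated by $\mathbf{r}_v^\bw$, then
$$(\mathbf{r}_v^\bw\mathbf{r}_{a_1}^\bw\mathbf{r}_v^\bw)\cdots(\mathbf{r}_v^\bw\mathbf{r}_{a_m}^\bw\mathbf{r}_v^\bw)\,\mathbf{r}_v^\bw=\mathbf{r}_v^\bw\,\mathbf{r}_{a_1}^\bw\cdots\mathbf{r}_{a_m}^\bw,$$
and the bracketing copies of $\mathbf{r}_v^\bw$ cancel against the displacement of $v$, leaving $P^{\bw[v]}=P^\bw$.

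The crux is therefore to show that the block of reflections conjugated by $\mathbf{r}_v^\bw$ coincides with the contiguous block of $\lambda^\bw$ immediately adjacent to $v$ across which the rotation moves it. Here the sign-control results do the work. When $v$ is green ($c_v^\bw>0$), the indices $i$ with $b_{iv}^\bw c_v^\bw>0$ are exactly those with an arrow $i\to v$; since $B^\bw$ is abundant there is an arrow between $v$ and every other vertex, and by Lemma~\ref{lem-last-green-vertex} a neighbor of $v$ is green precisely when its arrow points into $v$. Thus the conjugated reflections are exactly the green vertices following $v$ in $\lambda^\bw$ — a single contiguous block — while the green vertices preceding $v$ (arrows out of $v$), the point of return, and the red vertices are left fixed. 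This is precisely the block Corollary~\ref{cor-ordering-permutation-colors} transposes $v$ past; the red case is the mirror image, and Proposition~\ref{prop-control-signs-fork} with Lemmas~\ref{lem-signs-equal} and \ref{lem-signs-i-j-equal-k-not} excludes the sign configurations in which a conjugated factor could fall outside this block.

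The hard part will be the bookkeeping in this matching: one must separately treat $v$ green versus red, $v$ equal to or distinct from the last green vertex, and $v\in F^+(r)$ versus $v\in F^-(r)$, including the degenerate case in which $c_r^\bw$ is the only positive $c$-vector so that $r$ itself plays the role of last green vertex. In each case the explicit rotation of Corollaries~\ref{cor-last-green-vertex-mutation} and \ref{cor-ordering-permutation-colors} must be checked to line up with the conjugated block so that the telescoping is exact, and the permutations $\lambda$ and $\rho$ are then propagated through these rotations. Once every case yields $P^{\bw[v]}=P^\bw$, the induction closes and the asserted equality $\mathbf{r}_{\lambda(1)}^\bw\cdots\mathbf{r}_{\lambda(n)}^\bw=\mathbf{r}_{\rho(1)}\cdots\mathbf{r}_{\rho(n)}$ follows.
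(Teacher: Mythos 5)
Your plan is essentially the paper's own proof. The paper argues by exactly this induction on the length of $\bw$: the base case pins down $\rho$ according to whether the first mutation lies in $B^+(r)$ or $B^-(r)$, and the inductive step identifies the set of reflections conjugated by $\mathbf{r}_k^\bw$ (those $i$ with $b_{ik}^\bw c_k^\bw>0$) with the contiguous block that the sign-cycle ordering of Corollary~\ref{cor-ordering-signs-cycle} rotates $k$ past, so that the conjugating factors cancel by the same telescoping identity you display; Corollaries~\ref{cor-last-green-vertex-mutation} and \ref{cor-ordering-permutation-colors} are then invoked, exactly as you propose, to confirm that the rotated ordering is again the one from Corollary~\ref{cor-ordering-signs-cycle}.

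That said, two claims in your ``crux'' paragraph are misstated and would break the matching if taken literally. First, the point of return is \emph{not} always left fixed: if $v$ is green and $v\in F^{+}(r)$, then $b_{rv}^\bw c_v^\bw>0$, so $\mathbf{r}_r^\bw$ \emph{is} conjugated, and correspondingly the new ordering inserts $v$ after $r$ (giving $\cdots\prec 1\prec r\prec v\prec\cdots$); only when $v\in F^{-}(r)$ is $\mathbf{r}_r^\bw$ fixed and $v$ inserted before $r$. The paper's inductive step splits into precisely these two subcases, and the block $v$ moves past includes $r$ exactly when $r$ is conjugated --- your telescoping needs this to come out right. Second, the biconditional you attribute to Lemma~\ref{lem-last-green-vertex} (a neighbor of a green $v$ is green precisely when its arrow points into $v$) holds only when $v$ is the \emph{last} green vertex; for a general green $v$, the green vertices preceding $v$ in the sign-cycle ordering receive arrows \emph{from} $v$, yet are green. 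The statement you actually need --- that the conjugated non-$r$ vertices are exactly the greens strictly between $v$ and $r$ in the ordering --- follows instead from the fact that the sign-cycle ordering restricted to $F\setminus\{r\}$ is the reversed acyclic ordering, which is how the paper argues ($b_{ik}^\bw c_k^\bw\geq 0$ for $i\neq r$ if and only if $k\prec i\prec r$). Both repairs fit inside the case analysis you already planned, so the outline is sound once these are corrected.
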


\begin{proof}
    We argue by induction, beginning with the base case $\bw = [k]$.
    Let $r \prec n-1 \prec \dots \prec 1$ be the linear ordering of $B$ given in Corollary \ref{cor-fork-admissible-ordering}, where $r$ is the point of return of $B$.
    If $i \neq r$, then we naturally have $b_{ik}c_k \geq  0$ if and only if $k \prec i$ in this ordering.
    Additionally, Corollary \ref{cor-ordering-permutation-colors} tells us that either
    $$k \prec n-1 \prec \dots \prec k+1 \prec k-1 \prec \dots \prec 1 \prec r $$
    or
    $$k \prec r \prec  n-1 \prec \dots \prec k+1 \prec k-1 \prec \dots \prec 1$$
    is the linear ordering of $F = \mu_\bw(B)$ given in Corollary \ref{cor-fork-admissible-ordering}, depending on whether $k \in B^+(r)$ or $k \in B^-(r)$.
    If $k \in B^+(r)$, then $b_{rk}c_k \geq 0$.
    As $k$ is the only vertex of $F$ with negative $c$-vector, all other vertices have positive $c$-vectors.
    Thus the cyclic permutation of the linear ordering of $F$ given in Corollary \ref{cor-ordering-signs-cycle} is either
    $$n-1 \prec \dots \prec k+1 \prec k-1 \prec \dots \prec 1 \prec r \prec k$$ 
    or
    $$r \prec  n-1 \prec \dots \prec k+1 \prec k-1 \prec \dots \prec 1 \prec k$$
    respectively.
    Thus 
    $$\mathbf{r}_{n-1}^\bw \dots \mathbf{r}_{k+1}^\bw \mathbf{r}_{k-1}^\bw \dots \mathbf{r}_1^\bw \mathbf{r}_r^\bw \mathbf{r}_k^\bw = \mathbf{r}_{n-1} \dots \mathbf{r}_{k+1} \mathbf{r}_k \mathbf{r}_{k-1} \dots \mathbf{r}_1 \mathbf{r}_r$$
    or 
    $$\mathbf{r}_r^\bw \mathbf{r}_{n-1}^\bw \dots \mathbf{r}_{k+1}^\bw \mathbf{r}_{k-1}^\bw \dots \mathbf{r}_1^\bw \mathbf{r}_k^\bw =\mathbf{r}_r \mathbf{r}_{n-1} \dots \mathbf{r}_{k+1} \mathbf{r}_k \mathbf{r}_{k-1} \dots \mathbf{r}_1.$$
    proving the base case.

    To proceed with the inductive step, assume that 
    $$\mathbf{r}_{\lambda(1)}^\bw ... \mathbf{r}_{\lambda(n)}^\bw = \mathbf{r}_{\rho(1)}...\mathbf{r}_{\rho(n)}$$
    for some $\lambda, \rho \in\mathfrak{S}_n$, where $\lambda(i)$ is determined by the $i$th element of the ordering of $F = \mu_\bw(B)$ given in Corollary \ref{cor-ordering-signs-cycle},
    $$j \prec j-1 \prec \dots \prec 1 \prec r  \prec n-1 \prec n-2 \prec \dots \prec j+1,$$
    where $j$ is the last green vertex of $F$---with the possibility of $j = r$.
    Suppose we further mutate at some vertex $k \neq r$.

    If $\sgn(c_k^\bw) > 0$, then $j \preceq k \prec r$.
    Since our linear ordering on $F$ comes from the acyclic ordering on $F \setminus \{r\}$, this would force $b_{ik}^\bw c_k^\bw \geq 0$ for all $i \neq r $ if and only if $k \prec i \prec r$.
    Hence, there is a sub-ordering
    $$k-1 \prec \dots \prec 1 \prec r$$
    consisting of exactly the vertices $i$ such that $\mathbf{r}_{i}^{\bw[k]} = \mathbf{r}_{k}^{\bw} \mathbf{r}_{i}^{\bw} \mathbf{r}_{k}^{\bw}$, where $r$ is included if and only if $b_{rk}^\bw > 0$.
    For the case where $b_{rk}^\bw > 0$,
    $$\mathbf{r}_{k-1}^{\bw[k]}\dots \mathbf{r}_{1}^{\bw[k]} \mathbf{r}_{r}^{\bw[k]}\mathbf{r}_{k}^{\bw[k]} = \mathbf{r}_{k}^\bw \mathbf{r}_{k-1}^\bw \dots \mathbf{r}_{1}^\bw \mathbf{r}_{r}^\bw.$$
    For the case where $b_{rk}^\bw < 0$,
    $$\mathbf{r}_{k-1}^{\bw[k]}\dots \mathbf{r}_{1}^{\bw[k]} \mathbf{r}_{k}^{\bw[k]} \mathbf{r}_{r}^{\bw[k]} = \mathbf{r}_{k}^\bw \mathbf{r}_{k-1}^\bw \dots \mathbf{r}_{1}^\bw \mathbf{r}_{r}^\bw.$$
    These two cases correspond to the linear orderings
    $$j \prec \dots \prec k+1 \prec k-1 \prec \dots \prec 1 \prec r \prec k \prec n-1 \prec \dots \prec j+1$$ 
    and
    $$j \prec \dots \prec k+1 \prec k-1 \prec \dots \prec 1 \prec k \prec r \prec n-1 \prec \dots \prec j+1$$ 
    respectively.
    Corollary \ref{cor-ordering-permutation-colors} and Corollary \ref{cor-last-green-vertex-mutation} then show that each respective ordering is the ordering given in Corollary \ref{cor-ordering-signs-cycle}.
    If $\sigma$ is the permutation sending $i$ to the $i$th element of this linear ordering, then our two equalities of reflections give that
    $$\mathbf{r}_{\sigma(1)}^{\bw[k]} \dots \mathbf{r}_{\sigma(n)}^{\bw[k]} = \mathbf{r}_{\lambda(1)}^\bw ... \mathbf{r}_{\lambda(n)}^\bw = \mathbf{r}_{\rho(1)}...\mathbf{r}_{\rho(n)},$$
    proving the desired result when $\sgn(c_k^\bw) > 0$.
    A similar argument holds when $\sgn(c_k^\bw) < 0$.
    
    Therefore, we have shown by induction that
    $$\mathbf{r}_{\lambda(1)}^\bw ... \mathbf{r}_{\lambda(n)}^\bw = \mathbf{r}_{\rho(1)}...\mathbf{r}_{\rho(n)}$$
    for some permutations $\lambda, \rho \in\mathfrak{S}_n$, where $\lambda(i)$ is determined by the $i$th element of the ordering of $F$ given in Corollary \ref{cor-ordering-signs-cycle} and $\rho$ is determined by the first mutation of $\bw$.
\end{proof}

From our proof of the base case, we have the following description of $\rho$.

\begin{Cor} \label{cor-coxeter-element-right-side}
    Let $B$ be a framing of a fork and $v$ be the first mutation of a fork-preserving mutation sequence $\bw$.
    We have that 
    $$\mathbf{r}_{\lambda(1)}^\bw ... \mathbf{r}_{\lambda(n)}^\bw = \mathbf{r}_{\rho(1)}...\mathbf{r}_{\rho(n)}$$
    for some permutations $\lambda, \rho \in\mathfrak{S}_n$, as in Theorem \ref{thm-coxeter-element}.
    If $$r \prec n-1 \prec n-2 \prec \dots \prec 1$$ is the ordering of $F$ given in Corollary \ref{cor-fork-admissible-ordering}, then the permutation $\rho$ is determined by
    $$\mathbf{r}_{\rho(1)}...\mathbf{r}_{\rho(n)} = \mathbf{r}_{n-1} \mathbf{r}_{n-2} \dots \mathbf{r}_1 \mathbf{r}_r$$
    or
    $$\mathbf{r}_{\rho(1)}...\mathbf{r}_{\rho(n)} = \mathbf{r}_r \mathbf{r}_{n-1} \mathbf{r}_{n-2} \dots \mathbf{r}_1$$
    depending on whether $v \in B^+(r)$ or $v \in B^-(r)$ respectively.
\end{Cor}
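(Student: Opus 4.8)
The plan is to read the statement off the proof of Theorem~\ref{thm-coxeter-element}, in which $\rho$ was deliberately kept independent of all of $\bw$ except its first letter. The essential point is that the right-hand product $\mathbf{r}_{\rho(1)}\cdots\mathbf{r}_{\rho(n)}$ is assembled from the \emph{initial} reflections $\mathbf{r}_i=s_i$ and is therefore a single fixed element of $\mathcal W$ that never changes as $\bw$ grows. Indeed, the inductive step of that proof produced the chain
$$\mathbf{r}_{\sigma(1)}^{\bw[k]}\cdots\mathbf{r}_{\sigma(n)}^{\bw[k]}=\mathbf{r}_{\lambda(1)}^\bw\cdots\mathbf{r}_{\lambda(n)}^\bw=\mathbf{r}_{\rho(1)}\cdots\mathbf{r}_{\rho(n)},$$
so appending a mutation rewrites only the mutated (left-hand) Coxeter element and leaves the initial product untouched. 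Hence $\rho$ is pinned down entirely by the base case $\bw=[v]$, where $v$ is the first mutation of $\bw$.

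First I would recall the base-case identities established in the proof of Theorem~\ref{thm-coxeter-element}. With the ordering $r\prec n-1\prec\dots\prec1$ furnished by Corollary~\ref{cor-fork-admissible-ordering}, taking $v=k\in B^+(r)$ yields the right-hand side $\mathbf{r}_{n-1}\cdots\mathbf{r}_{k+1}\mathbf{r}_k\mathbf{r}_{k-1}\cdots\mathbf{r}_1\mathbf{r}_r$, while $v=k\in B^-(r)$ yields $\mathbf{r}_r\mathbf{r}_{n-1}\cdots\mathbf{r}_{k+1}\mathbf{r}_k\mathbf{r}_{k-1}\cdots\mathbf{r}_1$. Second I would reinsert $\mathbf{r}_k$ into its natural slot and observe the collapse: because the letters $\mathbf{r}_{n-1},\dots,\mathbf{r}_{k+1},\mathbf{r}_k,\mathbf{r}_{k-1},\dots,\mathbf{r}_1$ already run in descending order, the first expression simplifies to $\mathbf{r}_{n-1}\mathbf{r}_{n-2}\cdots\mathbf{r}_1\mathbf{r}_r$ and the second to $\mathbf{r}_r\mathbf{r}_{n-1}\mathbf{r}_{n-2}\cdots\mathbf{r}_1$, which are precisely the two products asserted. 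Combined with the invariance noted above, this gives the claimed description of $\mathbf{r}_{\rho(1)}\cdots\mathbf{r}_{\rho(n)}$ as a function only of whether $v\in B^+(r)$ or $v\in B^-(r)$.

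There is no real obstacle here: the corollary is pure bookkeeping on top of the finished induction. The single point deserving attention is the verification that the right-hand product truly depends on nothing beyond the first mutation of $\bw$; this is exactly the content of the displayed chain of equalities in the proof of Theorem~\ref{thm-coxeter-element}, where every inductive step preserves the initial Coxeter element on the right while only permuting and conjugating the mutated reflections on the left.
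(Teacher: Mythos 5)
Your proposal is correct and matches the paper's own route exactly: the paper proves this corollary by reading it off the base case $\bw=[k]$ of the proof of Theorem~\ref{thm-coxeter-element}, where the right-hand products $\mathbf{r}_{n-1}\cdots\mathbf{r}_{k+1}\mathbf{r}_k\mathbf{r}_{k-1}\cdots\mathbf{r}_1\mathbf{r}_r$ and $\mathbf{r}_r\mathbf{r}_{n-1}\cdots\mathbf{r}_{k+1}\mathbf{r}_k\mathbf{r}_{k-1}\cdots\mathbf{r}_1$ collapse to the two stated Coxeter elements, and the inductive step's chain of equalities keeps that initial product fixed so that $\rho$ depends only on the first mutation. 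No gaps.
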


\begin{Cor}[Corollary \ref{cor-non-self-crossing}]
 Let $n$ be any positive integer, and let $Q$ be a fork with $n$ vertices. 
For each fork-preserving mutation sequence $\bw$ from $Q$, there exist pairwise non-crossing and non-self-crossing admissible curves $\eta_i^\bw$ (see Definition~\ref{def-adm})  such that $\mathbf{r}_i^\bw = \nu(\eta_i^\bw)$ for every $i \in \{1,...,n\}$.
\end{Cor}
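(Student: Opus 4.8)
The plan is to read the purely algebraic statement of Theorem~\ref{thm-coxeter-element} through the curve–reflection dictionary of \cite{lee_correspondence_2021,lee_geometric_2023}. Recall that $\Sigma_\sigma$ is built from the acyclic ordering $\sigma$ of the fork supplied by Corollary~\ref{cor-fork-admissible-ordering}, and that since a fork is abundant the group $\mathcal W$ is the universal Coxeter group, so every reflection has a unique reduced word and hence a well-defined image under $\nu$. For the base case $\bw=[\,]$ we have $\mathbf r_i=s_i$, which is realized by the admissible curve that leaves $L_1$, crosses the single arc $T_i$ once transversally, and enters $L_2$; these $n$ curves meet only at $V$, so they are pairwise non-crossing and non-self-crossing. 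The strategy is to propagate this realization along $\bw$.

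First I would use Corollary~\ref{cor-coxeter-element-right-side}: the right-hand product $\mathbf r_{\rho(1)}\cdots\mathbf r_{\rho(n)}$ is $s_{n-1}\cdots s_1 s_r$ or $s_r s_{n-1}\cdots s_1$, i.e.\ the standard Coxeter element $c_\sigma$ attached to the ordering $\sigma$ (up to the cyclic rotation of $\sigma$ that Corollary~\ref{cor-fork-admissible-ordering} permits, which only rotates the labelling of the arcs of $\Sigma_\sigma$). Theorem~\ref{thm-coxeter-element} then says that $(\mathbf r^{\bw}_{\lambda(1)},\dots,\mathbf r^{\bw}_{\lambda(n)})$ is a factorization of $c_\sigma$ into $n$ reflections; since the absolute reflection length of $c_\sigma$ equals $n$ in the universal Coxeter group, this factorization is automatically reduced, and in particular each $\mathbf r_i^{\bw}$ lies below $c_\sigma$ in the absolute order.

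The heart of the argument is to turn this reduced factorization into an admissible curve family. I would argue inductively on the length of $\bw$, mirroring the conjugation rule $\mathbf r_i^{\bw[k]}=\mathbf r_k^{\bw}\mathbf r_i^{\bw}\mathbf r_k^{\bw}$ of Definition~\ref{def-r}: geometrically this is the half-twist of $\eta_i^{\bw}$ dragged across $\eta_k^{\bw}$, which yields an admissible curve $\eta_i^{\bw[k]}$ with $\nu(\eta_i^{\bw[k]})=\mathbf r_i^{\bw[k]}$, while the curves $\eta_j^{\bw}$ with $b_{jk}^{\bw}c_k^{\bw}\le 0$ are left unchanged. One then checks that applying this half-twist to a pairwise non-crossing family whose reflections reduce-factor $c_\sigma$ again produces a pairwise non-crossing family — equivalently, that a reduced reflection factorization of the Coxeter element is realized by non-crossing, non-self-crossing curves, which is exactly the content of the correspondence of \cite{lee_correspondence_2021,lee_geometric_2023}, proved in the abundant setting by Felikson–Tumarkin \cite{felikson_acyclic_2018} and in general by Nguyen \cite{nguyen_proof_2022}.

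The main obstacle is precisely this last step: verifying that the half-twist preserves transversality and, above all, non-crossing. The delicate point is that after conjugating the subfamily indexed by $\{\, i : b_{ik}^{\bw}c_k^{\bw}>0\,\}$ one must rule out any new intersection between a twisted curve and an untwisted one. Here the sign information organized in Corollary~\ref{cor-ordering-signs-cycle} — the last green vertex splitting the ordering of $\mu_\bw(B)$ into a positive block preceding $r$ and a negative block succeeding $r$ — is what guarantees that the twist acts on a contiguous, non-crossing bundle of curves, so that the resulting family remains non-crossing. Once this geometric lemma is established, the induction closes and the curves $\eta_i^{\bw}$ with $\mathbf r_i^{\bw}=\nu(\eta_i^{\bw})$ are produced for every $i\in\{1,\dots,n\}$.
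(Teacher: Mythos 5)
Your proposal is correct and ultimately takes the same route as the paper: Theorem~\ref{thm-coxeter-element} supplies a factorization of the Coxeter element into $n$ reflections, and Nguyen's result \cite{nguyen_proof_2022} converts that factorization directly into pairwise non-crossing, non-self-crossing admissible curves realizing the $\mathbf{r}_i^\bw$. The inductive half-twist scaffolding is unnecessary — since you concede that its key geometric lemma is exactly the cited correspondence theorem, one can simply apply that theorem once to the final factorization, which is precisely what the paper does.
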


\begin{proof}
From Theorem \ref{thm-coxeter-element} we know that $\mathbf{r}_{\lambda(1)}^\bw \dots \mathbf{r}_{\lambda(n)}^\bw = \mathbf{r}_{\rho(1)}\dots \mathbf{r}_{\rho(n)}$ is a Coxeter element. 
In \cite{nguyen_proof_2022}, they showed that if the product of $n$ reflections is a Coxeter element, then each reflection and its corresponding root can be represented by a non-self-crossing loop and a non-self-crossing admissible curve in an $n$-punctured disc, respectively. 
Moreover, the order of vertices of the quiver corresponds to the order of its Coxeter element. 
Applying this result with the order determined by the permutation ${\rho}$, the corollary is proved.
\end{proof}

\section{Examples}\label{ex_sec_first}
In Sections~\ref{ex_sec_first} and~\ref{ex_sec_second}, we will consider the following two quivers to demonstrate our theorems: 
the Markov quiver
    \[M = \begin{tikzcd}
    & 2\\
    1 & & 3
    \arrow[from=2-1, to=1-2, "2"]
    \arrow[from=1-2, to=2-3, "2"]
    \arrow[from=2-3, to=2-1, "2"]
    \end{tikzcd}\]
    and 
    \[Q = \begin{tikzcd}
    & 2\\
    1 & & 3
    \arrow[from=2-1, to=1-2, "2"]
    \arrow[from=1-2, to=2-3, "3"]
    \arrow[from=2-3, to=2-1, "3"]
    \end{tikzcd}.\]
Both quivers are mutation-cyclic \cite{BeinekeBrustleHille}. In this section we will consider three mutation sequences, namely, $\bw=[1]$,  $\bw=[1,2]$, and $\bw=[1,2,3]$.

\begin{Exa} \label{exa-first-theorem}
    An example of Theorem~\ref{thm-base-case} is given in the table below:   
    \begin{center}
    \begin{tabular}{|c | c | c|}
    \hline
         Mutation Sequence & $[B^\bw| C^\bw]$-matrix for $Q$ & $[B^\bw| C^\bw]$-matrix for $M$  \\
         \hline
         $\bw = [1]$     & $\begin{bmatrix}[ccc|ccc] 
    0 & - 2 & 3  & -1 & 0 & 0 \\
    2 & 0 & -3   & 0 & 1 & 0\\
    -3 & 3 & 0   & 3 & 0 & 1
    \end{bmatrix}$&  $\begin{bmatrix}[ccc|ccc] 
    0 & - 2 & 2  & -1 & 0 & 0 \\
    2 & 0 & -2   & 0 & 1 & 0\\
    -2 & 2 & 0   & 2 & 0 & 1
    \end{bmatrix}$\\
    \hline
         $\bw=[1,2]$   & $\begin{bmatrix}[ccc|ccc] 
    0 &  2 & -3  & -1 & 0 & 0 \\
    -2 & 0 & 3   & 0 & -1 & 0\\
    3 & -3 & 0   & 3 & 3 & 1
    \end{bmatrix}$&  $\begin{bmatrix}[ccc|ccc]  
    0 &  2 & -2  & -1 & 0 & 0 \\
    -2 & 0 & 2   & 0 & -1 & 0\\
    2 & -2 & 0   & 2 & 2 & 1
    \end{bmatrix}$\\
    \hline
         $\bw=[1,2,3]$& $\begin{bmatrix}[ccc|ccc] 
    0 &  -7 & 3  & -1 & 0 & 0 \\
    7 & 0 & -3   & 9 & 8 & 3\\
    -3 & 3 & 0   & -3 & -3 & -1
    \end{bmatrix}$ &  $\begin{bmatrix}[ccc|ccc] 
    0 & - 2 & 2  & -1 & 0 & 0 \\
    2 & 0 & -2   & 4 & 3 & 2\\
    -2 & 2 & 0   & -2 & -2 & -1
    \end{bmatrix}$\\
    \hline
    \end{tabular}
    \end{center}

    For each quiver, the sign vector of the $C$-matrix for $\bw=[1]$ (resp. $\bw=[1,2]$ and $\bw=[1,2,3]$) is $(-1,1,1)$ (resp. $(-1,-1,1)$ and $(-1,1,-1)$).
\end{Exa}

\begin{Exa} \label{exa-second-corollary}
    We demonstrate Corollary~\ref{cor-reflection-triple}.
    If we mutate both of $Q$ and $M$ with $\bw = [1]$, then we arrive at 
    $$\aligned 
    \mathbf{r}^\bw_1&=\mathbf{r}_1\\
    \mathbf{r}^\bw_2&=\mathbf{r}_2\\
    \mathbf{r}^\bw_3&=\mathbf{r}_1\mathbf{r}_3\mathbf{r}_1.
\endaligned$$
        If we mutate both of them with $\bw=[1,2]$, then we arrive at 
      $$\aligned 
    \mathbf{r}^\bw_1&=\mathbf{r}_1\\
    \mathbf{r}^\bw_2&=\mathbf{r}_2\\
    \mathbf{r}^\bw_3&=\mathbf{r}_2\mathbf{r}_1\mathbf{r}_3\mathbf{r}_1\mathbf{r}_2.
\endaligned$$   
    If we mutate both of them with $\bw=[1,2,3]$, then we arrive at
    $$\aligned 
    \mathbf{r}^\bw_1&=\mathbf{r}_1\\
    \mathbf{r}^\bw_2&=\mathbf{r}_2\mathbf{r}_1\mathbf{r}_3\mathbf{r}_1\mathbf{r}_2\mathbf{r}_1\mathbf{r}_3\mathbf{r}_1\mathbf{r}_2\\
    \mathbf{r}^\bw_3&=\mathbf{r}_2\mathbf{r}_1\mathbf{r}_3\mathbf{r}_1\mathbf{r}_2.
\endaligned$$
\end{Exa}

\begin{Exa} \label{exa-fourth-theorem}
    We demonstrate Theorem~\ref{thm-coxeter-element}.
    If we mutate both of them with $\bw=[1]$, then 
    $$\mathbf{r}_1^\bw \mathbf{r}_3^\bw \mathbf{r}_2^\bw = \mathbf{r}_3 \mathbf{r}_1 \mathbf{r}_2.$$
    If we mutate both of them with $\bw=[1,2]$, then we arrive at 
    $$\mathbf{r}_1^\bw \mathbf{r}_2^\bw \mathbf{r}_3^\bw = \mathbf{r}_3 \mathbf{r}_1 \mathbf{r}_2.$$
    If we mutate both of them with $\bw=[1,2,3]$, then we arrive at
    $$\mathbf{r}_1^\bw \mathbf{r}_3^\bw \mathbf{r}_2^\bw = \mathbf{r}_3 \mathbf{r}_1 \mathbf{r}_2.$$
\end{Exa}

\begin{Exa}
We demonstrate Corollary~\ref{cor-non-self-crossing}.
    If we mutate both of $Q$ and $M$ with $\bw = [1]$, we get the admissible curves as below. 
%%%% PICTURE
\begin{center}
\begin{tikzpicture}[scale=0.5mm]
\draw [help lines] (-1,-1) grid (3,2);
\draw [help lines] (0,-1)--(-1,0);
\draw [help lines] (1,-1)--(-1,1);
\draw [help lines] (2,-1)--(-1,2);
\draw [help lines] (3,-1)--(0,2);
\draw [help lines] (3,0)--(1,2);
\draw [help lines] (3,1)--(2,2);
%\draw [thick,red] (1,1)--(1.1,1.1);  %endpoint
%\draw [thick,red] (6,4)--(5.9,3.9);  %endpoint
\draw [thick,red] (0,0)--(1,1);  % \mathbf{r}_1
\draw [thick, red] (1,0).. controls (1.5,0.7) and (0.5,0.3) ..(1,1); % \mathbf{r}_2
\draw [thick,red] (0.2,-0.1)--(0.8,0.1);  % \mathbf{r}_3
\draw [thick, red] (0.8,0.1).. controls (0.95,0.2) .. (1,0); % \mathbf{r}_3 spiral top
\draw [thick, red] (0.2,-0.1).. controls (0.05,-0.2) .. (0,0); % \mathbf{r}_3 spiral bottom
%labels
\draw [thick, red] (0.5,0.75)node{$\eta_1^w$};
\draw [thick, red] (1.3,0.3)node{$\eta_2^w$};
\draw [thick, red] (0.5,-0.2)node{$\eta_3^w$};
%\triangulation labels
\draw [thick, red] (2.5,0.1)node{$T_3$};
\draw [thick, red] (2.6,0.6)node{$T_1$};
\draw [thick, red] (2.1,0.5)node{$T_2$};
\end{tikzpicture}
\end{center}
If we mutate both of them with $\bw=[1,2]$, we the admissible curves as below.
%%%% PICTURE
\begin{center}
\begin{tikzpicture}[scale=0.5mm]
\draw [help lines] (-1,-1) grid (3,2);
\draw [help lines] (0,-1)--(-1,0);
\draw [help lines] (1,-1)--(-1,1);
\draw [help lines] (2,-1)--(-1,2);
\draw [help lines] (3,-1)--(0,2);
\draw [help lines] (3,0)--(1,2);
\draw [help lines] (3,1)--(2,2);
%\draw [thick,red] (1,1)--(1.1,1.1);  %endpoint
%\draw [thick,red] (6,4)--(5.9,3.9);  %endpoint
\draw [thick,red] (0,0)--(1,1);  % \mathbf{r}_1
\draw [thick, red] (1,0).. controls (1.5,0.7) and (0.5,0.3) ..(1,1); % \mathbf{r}_2
\draw [thick,red] (0.2,-0.1)--(0.8,0.1);  % \mathbf{r}_3
\draw [thick, red] (0.8,0.1).. controls (0.8,0.2) and (1.2,0.2) .. (1,0); % \mathbf{r}_3 spiral top
\draw [thick, red] (0.2,-0.1).. controls (0.2,-0.2) and (-0.3,-0.3) .. (0,0); % \mathbf{r}_3 spiral bottom
%labels
\draw [thick, red] (0.5,0.75)node{$\eta_1^w$};
\draw [thick, red] (1.3,0.3)node{$\eta_2^w$};
\draw [thick, red] (0.5,-0.2)node{$\eta_3^w$};
%\triangulation labels
\draw [thick, red] (2.5,0.1)node{$T_3$};
\draw [thick, red] (2.6,0.6)node{$T_1$};
\draw [thick, red] (2.1,0.5)node{$T_2$};
\end{tikzpicture}
\end{center}
If we mutate both of them with $\bw=[1,2,3]$, we get the admissible curves as below.
\begin{center}
\begin{tikzpicture}[scale=0.5mm]
\draw [help lines] (-1,-1) grid (3,2);
\draw [help lines] (0,-1)--(-1,0);
\draw [help lines] (1,-1)--(-1,1);
\draw [help lines] (2,-1)--(-1,2);
\draw [help lines] (3,-1)--(0,2);
\draw [help lines] (3,0)--(1,2);
\draw [help lines] (3,1)--(2,2);
%\draw [thick,red] (1,1)--(1.1,1.1);  %endpoint
%\draw [thick,red] (6,4)--(5.9,3.9);  %endpoint
\draw [thick,red] (0,0)--(1,1);  % \mathbf{r}_1
\draw [thick,red] (0.2,0.1)--(1.8,0.9);  % \mathbf{r}_2
\draw [thick, red] (1.8,0.9).. controls (1.8,1.15) and (2.25,1.25) .. (2,1); % \mathbf{r}_2 spiral top
\draw [thick, red] (0.2,0.1).. controls (0.2,-0.15) and (-0.25,-0.25) .. (0,0); % \mathbf{r}_2 spiral bottom
\draw [thick,red] (1.2,0.9)--(1.8,1.1);  % \mathbf{r}_3
\draw [thick, red] (1.8,1.1).. controls (1.8,1.2) and (2.3,1.3) .. (2,1); % \mathbf{r}_3 spiral top
\draw [thick, red] (1.2,0.9).. controls (1.2,0.8) and (0.7,0.7) .. (1,1); % \mathbf{r}_3 spiral bottom
%labels
\draw [thick, red] (0.5,0.75)node{$\eta_1^w$};
\draw [thick, red] (1.2,0.3)node{$\eta_2^w$};
\draw [thick, red] (1.5,1.2)node{$\eta_3^w$};
%\triangulation labels
\draw [thick, red] (2.5,0.1)node{$T_3$};
\draw [thick, red] (2.6,0.6)node{$T_1$};
\draw [thick, red] (2.1,0.5)node{$T_2$};
\end{tikzpicture}
\end{center}

\end{Exa}

\begin{Exa} \label{exa-third-theorem}
    To demonstrate how to calculate $l$-vectors, we consider $l_2^\bw$ for the Markov quiver $M$ with $\bw = [1,2,3]$ and linear ordering $1 \prec 3 \prec 2$.
    First, construct the GIM
    $$A = \begin{bmatrix}
        2 & 2 & -2 \\
        2 & 2 & -2 \\
        -2& -2 & 2
    \end{bmatrix}.$$
    Then consider the following matrices in $M_{3\times 3}(\mathbb{Z})$.
    $$S_1 = \begin{bmatrix}
    -1 & 0 &  0 \\
    -2  & 1 &  0 \\
    2  & 0 &  1
    \end{bmatrix},\quad \quad 
    S_2 = \begin{bmatrix}
    1 & -2 &  0 \\
    0  & -1 &  0 \\
    0  & 2 &  1
    \end{bmatrix},\quad \quad  
    S_3 = \begin{bmatrix}
    1 & 0 &  2 \\
    0  & 1 &  2 \\
    0  & 0 &  -1
    \end{bmatrix}.$$
    From Example \ref{exa-second-corollary} and the definition of $l$-vectors, we know that
    $$\aligned 
    l_2^\bw &= s_2s_1s_3s_1(\alpha_2)\\
    &= (S_2^TS_1^TS_3^TS_1^T (\alpha_2^T) )^T\\
    &= \alpha_2 S_1 S_3 S_1 S_2\\
    &= (- 2\alpha_1 + \alpha_2 )S_3 S_1 S_2\\
    &= (- 2\alpha_1 +\alpha_2  - 2 \alpha_3)S_1 S_2 \\
    &= (- 4\alpha_1 +\alpha_2 - 2 \alpha_3) S_2\\
    &= - 4\alpha_1 + 3\alpha_2  - 2 \alpha_3.
    \endaligned$$
    If we look at $l_2^\bw$ for the quiver $Q$ with $\bw = [1,2,3]$ and linear ordering $1 \prec 3 \prec 2$.
    First, construct the GIM:
    $$A = \begin{bmatrix}
        2 & 2 & -3 \\
        2 & 2 & -3 \\
        -3& -3 & 2
    \end{bmatrix}.$$
    From Example \ref{exa-second-corollary}, we know that
    $$l_2^\bw = s_2s_1s_3s_1(\alpha_2)$$
    $$= s_2s_1s_3(- 2\alpha_1 +\alpha_2 )$$
    $$= s_2s_1(- 2\alpha_1 +\alpha_2  - 3 \alpha_3)$$
    $$= s_2(- 9\alpha_1 +\alpha_2  - 3 \alpha_3)$$
    $$= - 9\alpha_1 + 8\alpha_2  - 3 \alpha_3$$
    
    These calculations can then be used to demonstrate Theorem~\ref{thm-linear-ordering}. Compare the table below with the one given in Example~\ref{exa-first-theorem}.
    
    \begin{center}  
    \begin{tabular}{|c | c | c|}
    \hline
         Mutation Sequence & $L$-matrix for $Q$ & $L$-matrix for $M$  \\
         \hline
         $\bw = [1]$     & $\begin{bmatrix}
    1 & 0 &  0 \\
     0 &   1 &  0 \\
    3 & 0 &  1
    \end{bmatrix}$ & $\begin{bmatrix}
    1 & 0 &  0 \\
     0 &   1 &  0 \\
    2 & 0 &  1
    \end{bmatrix}$\\
    \hline
         $\bw = [1,2]$   & $\begin{bmatrix}
    1 & 0 &  0 \\
     0 &   1 &  0 \\
    3 & -3 &  1
    \end{bmatrix}$ & $\begin{bmatrix}
    1 & 0 &  0 \\
     0 &   1 &  0 \\
    2 & -2 &  1
    \end{bmatrix}$\\
    \hline
         $\bw = [1,2,3]$ & $\begin{bmatrix}
    1 & 0 &  0 \\
     -9 &   8 &  -3 \\
    3 & -3 &  1
    \end{bmatrix}$ & $\begin{bmatrix}
    1 & 0 &  0 \\
     -4 & 3 &  -2 \\
    2 & -2 &  1
    \end{bmatrix}$\\
    \hline
    \end{tabular}
    \end{center}
\end{Exa}

\section{More examples}\label{ex_sec_second}
In this section more interesting examples are given. We still consider the two quivers $Q$ and $M$, but this time the mutation sequence will be different. 

\begin{Exa} \label{exa-first} %running example

Let $\bw = [1,2,3,2,1,3]$ be our mutation sequence. If $Q$ is 
our initial quiver, then
$$C^\bw = \begin{bmatrix}
433 &  378 &  144 \\
-16  &  -8  &  -3 \\
-24  & -21 &   -8 
\end{bmatrix},$$
giving the sign-vector $(1,-1,-1)$.

Calculating the reflections associated to the mutation sequence and initial quiver produces
$$\mathbf{r}_1^w =
\mathbf{r}_2\mathbf{r}_1\mathbf{r}_3\mathbf{r}_1\mathbf{r}_2\mathbf{r}_1\mathbf{r}_3\mathbf{r}_1\mathbf{r}_2\mathbf{r}_1\mathbf{r}_3\mathbf{r}_1\mathbf{r}_2\mathbf{r}_1\mathbf{r}_2\mathbf{r}_1\mathbf{r}_3\mathbf{r}_1\mathbf{r}_2\mathbf{r}_1\mathbf{r}_3\mathbf{r}_1\mathbf{r}_2\mathbf{r}_1\mathbf{r}_3\mathbf{r}_1\mathbf{r}_2$$
$$\mathbf{r}_2^w = \mathbf{r}_1\mathbf{r}_2\mathbf{r}_1\mathbf{r}_3\mathbf{r}_1\mathbf{r}_2\mathbf{r}_1\mathbf{r}_3\mathbf{r}_1\mathbf{r}_2\mathbf{r}_1$$
$$\mathbf{r}_3^w = \mathbf{r}_2\mathbf{r}_1\mathbf{r}_3\mathbf{r}_1\mathbf{r}_2\mathbf{r}_1\mathbf{r}_3\mathbf{r}_1\mathbf{r}_2\mathbf{r}_1\mathbf{r}_3\mathbf{r}_1\mathbf{r}_2.$$
Taking the product $\mathbf{r}_2^\bw \mathbf{r}_3^\bw \mathbf{r}_1^\bw$ gives us the following as well
$$\mathbf{r}_2^\bw \mathbf{r}_3^\bw \mathbf{r}_1^\bw = \mathbf{r}_3 \mathbf{r}_1 \mathbf{r}_2,$$
demonstrating Theorem \ref{thm-coxeter-element}.

From these reflections, we may calculate the $L$-matrix.
For the linear orderings $ 1 \prec 3 \prec 2$, $2 \prec 1 \prec 3$, or $3 \prec 2 \prec 1$, we have that the $L$-matrix equals the $C$-matrix up to signs of entries, i.e., 
$$L^\bw = \begin{bmatrix}
433 &  -378 &  144 \\
-16  &  8  &  -3 \\
24  & -21 &   8 
\end{bmatrix}; L^\bw = \begin{bmatrix}
433 &  378 &  144 \\
16  &  8  &  3 \\
24  & 21 &   8 
\end{bmatrix}; \text{ or }
L^\bw = \begin{bmatrix}
433 &  378 &  -144 \\
16  &  8  &  -3 \\
-24  & -21 &   8 
\end{bmatrix}$$
For the other linear orderings, we have the following matrix up to signs:
$$L^\bw = \begin{bmatrix}
 283681 & -840402 & -94560\\
   -160 &     80   &    9\\
   -240 &    711   &   80
\end{bmatrix}$$

As an example of how to calculate $L$-vectors, we examine the linear ordering $ 3 \prec 2 \prec 1$.
This gives the corresponding GIM:
$$\begin{bmatrix}
   2 & -2 & 3\\
   -2 & 2 & -3\\
   3 & -3 & 2
\end{bmatrix}.$$
Then the $l$-vector $l_3^\bw$ is found in the following way
$$l^\bw_3 = s_2s_1s_3s_1s_2s_1(\alpha_3) = s_2s_1s_3s_1s_2(-3\alpha_1 + \alpha_3)$$
$$= s_2s_1s_3s_1(-3\alpha_1 - 3\alpha_2 + \alpha_3)$$
$$= s_2s_1s_3(-6\alpha_1 - 3\alpha_2 + \alpha_3)$$
$$= s_2s_1(-6\alpha_1 - 3\alpha_2 + 8\alpha_3)$$
$$= s_2(-24\alpha_1 - 3\alpha_2 + 8\alpha_3)$$
$$= -24\alpha_1 - 21\alpha_2 + 8\alpha_3$$

Finally, below we have the 3 non-self-crossing admissible curves $\eta_i^\bw$ such that $\mathbf{r}_i^\bw = \nu(\eta_i^\bw)$ for every $i \in \{1,2,3\}$.
\begin{center}
\begin{tikzpicture}[scale=0.5mm]
\draw [help lines] (-1,-1) grid (8,6);
\draw [help lines] (-1,0)--(0,-1);
\draw [help lines] (-1,1)--(1,-1);
\draw [help lines] (-1,2)--(2,-1);
\draw [help lines] (-1,3)--(3,-1);
\draw [help lines] (-1,4)--(4,-1);
\draw [help lines] (-1,5)--(5,-1);
\draw [help lines] (-1,6)--(6,-1);
\draw [help lines] (0,6)--(7,-1);
\draw [help lines] (1,6)--(8,-1);
\draw [help lines] (2,6)--(8,0);
\draw [help lines] (3,6)--(8,1);
\draw [help lines] (4,6)--(8,2);
\draw [help lines] (5,6)--(8,3);
\draw [help lines] (6,6)--(8,4);
\draw [help lines] (7,6)--(8,5);
%%%% R1
\draw [thick,red] (0.29,0.11)--(6.89,4.87);  %mid-line
\draw [thick, red] (6.89,4.87).. controls (6.89,5.15) and (7.25,5.25) .. (7,5); % \mathbf{r}_1 spiral top
\draw [thick, red] (0.29,0.11).. controls (0.2,-0.15) and (-0.25,-0.25) .. (0,0); % \mathbf{r}_1 spiral bottom
%%%% R2
\draw [thick,red] (0,0)--(4,3);  %mid-line
%%%% R3
\draw [thick,red] (4.12,3.11)--(6.69,4.89);  %mid-line
\draw [thick, red] (6.69,4.89).. controls (6.89,5.15) and (7.35,5.35) .. (7,5); % \mathbf{r}_3 spiral top
\draw [thick, red] (4.12,3.11).. controls (4.2,2.85) and (3.75,2.75) .. (4,3); % \mathbf{r}_3 spiral bottom
%labels
\draw [thick, red] (4.6,2.75)node{$\eta_1^w$};
\draw [thick, red] (1.7,1.7)node{$\eta_2^w$};
\draw [thick, red] (5.2,4.2)node{$\eta_3^w$};
%\triangulation labels
\draw [thick, red] (3.5,1.1)node{$T_3$};
\draw [thick, red] (3.6,1.6)node{$T_1$};
\draw [thick, red] (3.1,1.5)node{$T_2$};
\end{tikzpicture}
\end{center}

\end{Exa}

%Compare the previous example with the following example on the Markov quiver. 

\begin{Exa} \label{exa-second}
Let $\bw = [1,2,3,2,1,3]$ be our mutation sequence. If  $M$ is 
our initial quiver, then
$$C^\bw = \begin{bmatrix}
13 & 8 & 6\\
-6 &-3 &-2\\
-6 &-4 &-3
\end{bmatrix},$$
giving the sign-vector (1,-1,-1), which is identical to the case for the quiver $Q$.

Calculating the reflections associated to the mutation sequence produces
$$\mathbf{r}_1^w =
\mathbf{r}_2\mathbf{r}_1\mathbf{r}_3\mathbf{r}_1\mathbf{r}_2\mathbf{r}_1\mathbf{r}_3\mathbf{r}_1\mathbf{r}_2\mathbf{r}_1\mathbf{r}_3\mathbf{r}_1\mathbf{r}_2\mathbf{r}_1\mathbf{r}_2\mathbf{r}_1\mathbf{r}_3\mathbf{r}_1\mathbf{r}_2\mathbf{r}_1\mathbf{r}_3\mathbf{r}_1\mathbf{r}_2\mathbf{r}_1\mathbf{r}_3\mathbf{r}_1\mathbf{r}_2$$
$$\mathbf{r}_2^w = \mathbf{r}_1\mathbf{r}_2\mathbf{r}_1\mathbf{r}_3\mathbf{r}_1\mathbf{r}_2\mathbf{r}_1\mathbf{r}_3\mathbf{r}_1\mathbf{r}_2\mathbf{r}_1$$
$$\mathbf{r}_3^w = \mathbf{r}_2\mathbf{r}_1\mathbf{r}_3\mathbf{r}_1\mathbf{r}_2\mathbf{r}_1\mathbf{r}_3\mathbf{r}_1\mathbf{r}_2\mathbf{r}_1\mathbf{r}_3\mathbf{r}_1\mathbf{r}_2.$$
Taking the product $\mathbf{r}_2^\bw \mathbf{r}_3^\bw \mathbf{r}_1^\bw$ gives us the following as well
$$\mathbf{r}_2^\bw \mathbf{r}_3^\bw \mathbf{r}_1^\bw = \mathbf{r}_3 \mathbf{r}_1 \mathbf{r}_2.$$
Note that this is exactly what occurred when we looked at the quiver $Q$. 
It further implies that the corresponding admissible curves are the same as in the previous case. 

For the linear orderings, $ 1 \prec 3 \prec 2$, $2 \prec 1 \prec 3$, and $3 \prec 2 \prec 1$, we have that the $L$-matrix equals the $C$-matrix up to signs of entries, i.e., 
$$L^\bw = \begin{bmatrix}
13 & -8 & 6\\
-6 &3 &-2\\
6 &-4 &3
\end{bmatrix}; L^\bw = \begin{bmatrix}
13 & 8 & 6\\
6 &3 &2\\
6 &4 &3
\end{bmatrix}; \text{ or }
L^\bw = \begin{bmatrix}
13 & 8 & -6\\
6 &3 &-2\\
-6 &-4 &3
\end{bmatrix}.$$
For the other linear orderings, we have the following matrix up to signs:
$$L^\bw = \begin{bmatrix} 
 23661 & -68952 & -11830 \\
   -70 &    35  &    6 \\
   -70 &   204  &   35 
\end{bmatrix}.$$

Below, for each of $Q$ and $M$, we compare the $C$-matrix and the $L$-matrix for the linear ordering $2 \prec 1 \prec 3$, demonstrating Theorem~\ref{thm-linear-ordering}. 
 \begin{center}  
    \begin{tabular}{|c | c | c| c | c|}
    \hline
         Mutation Sequence & $C$-matrix for $Q$& $L$-matrix for $Q$ & $C$-matrix for $M$  & $L$-matrix for $M$ \\
    \hline
         $\bw = [1,2,3,2,1,3]$ & $\begin{bmatrix}
433 &  378 &  144 \\
-16  &  -8  &  -3 \\
-24  & -21 &   -8 
    \end{bmatrix}$ & $\begin{bmatrix}
433 &  378 &  144 \\
16  &  8  &  3 \\
24  & 21 &   8 
    \end{bmatrix}$& $\begin{bmatrix}
 13 & 8 & 6\\
 -6 & -3 & -2\\
-6 & -4 & -3
    \end{bmatrix}$& $\begin{bmatrix}
 13 & 8 & 6\\
 6 & 3 & 2\\
6 & 4 & 3
    \end{bmatrix}$\\
    \hline
    \end{tabular}
    \end{center}

\end{Exa}

\bibliographystyle{amsplain}
\bibliography{bibliography} 
\addtocontents{toc}{\protect\setcounter{tocdepth}{1}}

%\printbibliography

\end{document}